\newtheorem{Lemma}{Lemma}
\newtheorem{Prop}{Proposition}
\newtheorem{Theo}{Theorem}
\newtheorem{Remark}{Remark}
\newcommand{\R}{\mathbb{R}}
\newcommand{\indic}[1]{ \mathbf{1}_{#1}}
\newcommand{\N}{\mathbb{N}}
\newcommand{\E}{\mathbb{E}}
\newcommand{\X}{\mathbb{X}}  
\newcommand{\dhaus}{\operatorname{Haus}}
\newcommand{\TV}{\operatorname{TV}}
\title{Rates of convergence for robust geometric inference}
\author{F. Chazal, P. Massart and B. Michel}
\date{\today}
\begin{document}

\maketitle

\begin{abstract}
Distances to compact sets are widely used in the field of Topological Data Analysis for inferring geometric and topological features from point clouds. In this context, the distance to a probability measure (DTM) has been introduced by \cite{chazal2011geometric} as a robust alternative to the distance a compact set. In practice, the DTM can be estimated by its empirical counterpart, that is the distance to the empirical measure (DTEM). In this paper we give a tight control of the deviation of the DTEM. Our analysis relies on a local analysis of empirical processes. In particular, we show that the rate of convergence of the DTEM directly depends on the regularity at zero of a particular quantile function which contains some local information about the geometry of the support.  This quantile function  is the relevant quantity to describe precisely how difficult is a geometric inference problem. Several numerical experiments illustrate the convergence of the DTEM and also confirm that our bounds are  tight.
\end{abstract}

\section{Introduction and motivation}
The last decades have seen  an explosion in the amount of available data in almost all domains of science, industry, economy and even
everyday life. These data, often coming as point clouds embedded in Euclidean spaces, usually lie close to some lower dimensional geometric structures (e.g. manifold, stratified space,...) reflecting properties of the system from which they have been generated. Inferring the topological and geometric features of such multivariate data has recently attracted a lot of interest in both statistical and computational topology communities. 

Considering point cloud data as independent observations of some common probability distribution $P$ in $\R^d$, many statistical methods have been proposed to infer the geometric features of the support of $P$ such as principal curves and surfaces \cite{HastieStuetzle89}, multiscale geometric analysis \cite{adh-amdfs-06}, density-based approaches \cite{gpvw-pdgf-09} or  support estimation, to name a few.  Although they come with statistical guarantees these methods usually do not provide geometric guarantees on the estimated  features.

On another hand, with the emergence of Topological Data Analysis \citep{carlsson2009topology}, purely geometric methods have been proposed to infer the geometry of compact subsets of $\R^d$. These methods aims at recovering precise 
geometric information of a given shape -- see, e.g. \cite{smrnnuagCL,fhshcrsNSW,ncaosiCCL,chazal2008scm}.  Although these methods come with strong topological and geometric guarantees they usually rely on sampling assumptions that do not apply in statistical settings. In particular, these methods can be very sensitive to outliers. Indeed, they generally rely on the study of the sublevel sets of distance functions to compact sets. In practice only a sample drawn on, or close, to a geometric shape is known and thus only a distance to the data can be computed. The sup norm between the distance to the data and the distance to the underlying shape being exactly the Hausdorff distance between the data and the shape, we see that the statistical analysis of standards TDA methods boils down to the problem of support estimation in Hausdorff metric. This last problem  has been the subject of much study in statistics  \citep[see for instance ][]{devroye1980detection,cuevas2004boundary,singh2009adaptive}. Being strongly dependent of the estimation of the  support  in Hausdorff metric, it is now clear why  standard TDA methods may be very sensitive to outliers.

%

To  provide a more robust approach of TDA,
a notion of distance function to a measure (DTM) in $\R^d$ has been introduced by \cite{chazal2011geometric} as a robust alternative to the classical distance to compact sets.  Given a probability distribution $P$ in $\R^d$ and a real parameter $0 \leq u \leq 1$, \cite{chazal2011geometric} generalize the notion of distance to the support of $P$ by the function 
\begin{equation}
\label{eq:DTMPrelimdef}
\delta_{P, u} : x \in \R^d  \mapsto \inf\{ t > 0 \, ; \,  P(\bar B (x,t) ) \geq  u  \} 
\end{equation}
where $\bar B (x,t)$ is the closed Euclidean ball of center $x$ and radius $t$. For $u=0$, this function coincides with the usual distance function to the support of $P$. For higher values of $u$,  it is larger than the usual distance function since a  portion of mass $u$ has to be included in the ball centered on $x$.  To avoid issues due to discontinuities of the map $P \to \delta_{P, u}$, the distance to measure (DTM) function with parameter $m \in [0,1]$ and power $r \geq 1$ is defined by 
\begin{equation}
\label{eq:DTMdef}
d_{P,m,r}(x)  : x \in \mathcal \R^d  \mapsto  \left(  \frac 1 {m} \int _0 ^{m} \delta_{P,u}^r(x)  d u \right)^{1/r}.
\end{equation}

It was shown in \cite{chazal2011geometric}  that the DTM shares many properties with classical distance functions that make it well-adapted for geometric inference purposes (see Theorem~\ref{Theo:StabWp} in Appendix~\ref{sec:RatesStability}). First, it is stable with respect to perturbations of $P$ in the Wasserstein metric .  
This property implies that the DTM associated to close distributions in the Wasserstein metric have close sublevel sets. Moreover, when $r=2$, the function $d_{P,m,2}^2$ is semiconcave ensuring strong regularity properties on the geometry of its sublevel sets.  Using these properties, \cite{chazal2011geometric}
 show that, under general assumptions, if $\tilde P$ is a probability distribution approximating $P$, then the sublevel sets of $d_{\tilde P,m,2}$ provide a topologically correct approximation of the support of $P$.
The introduction of DTM has motivated further works and applications in various directions such as topological data analysis \cite{buchet2015socg}, GPS traces analysis \cite{ccgjs-ddts-11}, density estimation \cite{biau2011weighted}, deconvolution \cite{CCDM11} or clustering \cite{chazal2013persistence} just to name a few. Approximations, generalizations and variants of the DTM have also been recently considered in \cite{guibas2013witnessed,buchet2015efficient,phillips2014goemetric}. However no strong statistical analysis of the DTM has not been proposed so far.


In practice, the measure $P$ is usually only known through a finite set of observations  $\X_n = \{ X_1,\dots, X_n\} $ sampled from $P$, raising the question of the approximation of the DTM.
A natural idea to estimate the DTM from $\X_n$ is to plug the empirical measure $P_n$ instead of $P$ in the definition of the DTM. This ``plug-in strategy" corresponds to computing the distance to the empirical measure (DTEM). It can be applied with other estimators of the measure $P$, for instance in~\cite{CCDM11} it was proposed to plug  a deconvolved measure  into the DTM. 

For $m = \frac k n $, the DTEM satisfies
  $$ d^r_{P_n,k/n,r}(x)  := \frac 1 k \sum _{j=1} ^k  {\|x -  \X_n \|}^r_{(j)} \,  ,$$
where $\|x -  \X_n \|_{(j)}$ denotes the distance between $x$ and its $j$-th neighbor in $\{X_1,\dots, X_n\}$.  This
quantity can be easily  computed in practice since it only requires the distances between $x$ and the sample points. 

Let us introduce 
\begin{eqnarray}
\Delta_{n,m,r}(x) &:=& d^r_{P_n,m,r}(x) - d^r_{  P,m,r}(x) \label{eq:defDelta}
\end{eqnarray}
and
\begin{equation*}
\label{eq:defDeltatilde}
\tilde \Delta_{n,m,r}(x) := d_{P_n,r, m}(x) - d_{  P,m,r}(x).
\end{equation*}
The aim of this paper is to study the deviations and the rate of convergence of  $\Delta_{n,m,r}(x)$. The functional convergence of the DTEM has  been studied recently in \cite{chazal2014robust} where it is shown that the parametric  convergence rate in $1/\sqrt n$ is achieved under reasonable assumptions. 
In this paper we address the question of the convergence in probability and the rate of convergence in expectation of $\Delta_{n,m,r}(x)$, both from an asymptotic and non asymptotic point perspective. 

The stability properties of DTM with respect to Wasserstein metrics 
suggests that this problem could be addressed using known results 
about the convergence of empirical measure $P_n$ to $P$ under Wasserstein metrics. This last problem has been the subject of many works in the past \citep{RR98,BGM99,BGU05} and it is still an active field of research \citep{FG,DSS13}. Contrary to the context of TDA with the standard distance function, where stability result provide optimal rates of convergence (see \citet{chazalconvergenceJMLR}), we show in the paper that Wasserstein stability does not lead to optimal results for the DTM. Moreover, such a basic approach does not provide a correct understanding of the influence of the parameter $m$ (see Appendix~\ref{sec:RatesStability}).

We adopt an alternative approach based on the observation that the DTM  only depends on a push forward measure of $P$ on the real line. Indeed, the DTM can be rewritten as follows:
\begin{equation}     
\label{eq:DTMQuantile}  
 d^r_{P,m,r}(x) = \frac 1m \int_0^m    F_{x,r}^{-1}(u) d u ,
\end{equation}
where $F_{x,r}^{-1}$ is the quantile function of the push forward  probability measure of $P$ by the function $\|x-\cdot \| ^r$ (see appendix~\ref{subsec:proofStab} for a rigorous proof). Then we have
\begin{eqnarray}
\Delta_{n,m,r}(x) &:=& \frac 1 m \int_0^{m} \left\{   F_{x,r,n}^{-1}(u)   -    F_{x,r} ^{-1}(u) 
  \right\} du \label{eq:DeltaW1} ,
\end{eqnarray}
where $F_{x,r,n}$ is the empirical distribution function of the observed distances (to the power $r$): $  {\|x -  X_1\|}^r$, $\dots$, ${\|x -  X_n\|}^r$. 
We study the convergence of $\Delta_{n,m,r}(x)$  to zero  with both an asymptotic and non asymptotic points of view. An asymptotic approach means that we take $k = k_n := m n $ for some
fixed $m$ and we study the mean rate of convergence to zero of $\Delta_{n,\frac{k_n}{n},r}(x) $. 
A non asymptotic approach means that $n$ is fixed and then the problem is to get a tight expectation  bound on  $\Delta_{n,\frac{k}{n},r}(x)
$. In particular,  we  are particularly interested in the situation where $\frac kn$ is chosen very  close to zero.  This situation is of  primary interest since it 
corresponds to the realistic situation where we use the  DTM to clean the support from a small proportion of outliers.

Our results rely on a local analysis of the empirical process to compute tight deviation bounds of $\Delta_{n,\frac{k}{n},r}(x)$. More precisely, we use a  sharp control of a supremum defined on the uniform empirical process. Such local analysis  has been successfully applied in the literature about non asymptotic statistics, for instance  \cite{mammen1999smooth} obtain fast rates of convergence in classification. For a more general presentation of these ideas in model selection, see~\cite{Massart:07} and in particular Section~1.2 in the Introduction of this monograph.

We show that the rate of convergence of $\Delta_{n,\frac{k}{n},r}(x)$ directly depends on the regularity at zero of $F_{x,r}^{-1}$.  This quantile function appears to be the relevant quantity to describe precisely how difficult is a geometric inference problem.  The second contribution of this paper is relating the regularity of the quantile function $F_{x,r} ^{-1}$ to the geometry of the support, establishing a link between the complexity of the  geometric problem and a purely probabilistic quantity.

Our main results, the deviations bounds and the rate of convergence of $\Delta_{n,\frac{k}{n},r}(x)$ derived from  the local analysis, are given in Section~\ref{sec:mainresults}. These results are given in terms of the regularity of the quantile function $F_{x,r}^{-1}$. Generally speaking, it is not easy to determine  what is the regularity of the quantile function $F_{x,r}^{-1}$ given a distribution $P$ and an observation point $x \in \R^d$. Indeed, it depends on the shape of the support of $P$, on the way the measure $P$ is distributed on its support and on the position of $x$ with regards to the support of $P$. This is why, in the results given in Section~\ref{sec:mainresults}, the assumptions are made directly on the quantile functions $F_x^{-1}$. Section~\ref{sec:AppliRd-discu} is then devoted to the geometric interpretation of these results and their assumptions. In Section~\ref{subs:expe}, several numerical experiments illustrate the convergence of the DTEM and also confirm that our bounds are sharp. Rates of convergence derived from stability results of the DTM are presented in Appendix~\ref{sec:RatesStability}. Proofs and background about empirical processes and quantiles can be found in the appendices also.

\medskip

{\it Notation.} Let  $a \wedge  b$ and $a \vee b$ denotes the minimum and the maximum between two real numbers $a$ and $b$. The Euclidean norm on $\R^d$ is $\|  \cdot \|$. The open Euclidean ball of center $x$ and radius $t$ is denoted by $B (x,t)$. For some point $x$ and a compact set $K$ in $\R^d$, the distance between $x$ and $K$ is defined by  $\| K - x \|  :=  \inf_{y \in K} \|y -x \| $. The Hausdorff distance between two compact sets  $K$ and $K'$ is denoted by $\dhaus \left( K,K' \right)$. A probability distribution on $\R$ defined by a distribution function $F$ is denoted by $dF$. The quantile function $F^{-1}$ of $dF$ is defined by 
$$ F^{-1}(u)  :=  \inf \{ t  \in \R \, , \,  F(t) \geq u  \},  \quad   0  < u  < 1 .$$
By monotonicity, the quantile function $ F^{-1}$   can be extended in $0$ and at $1$ by setting $ F^{-1} (0) =  \inf \{ t  \in \R \, , \,  F(t)  >0   \}, $ and $ F^{-1} (1) =  \sup \{ t  \in \R \,, \,  F(t)  < 1  \} .$ Finally, for two positive sequences $(a_n)$ and $(b_n)$, we use the standard notation $a_n \lesssim b_n$ if there exists a positive constant $C$ such that $a_n \leq C b_n$.

 \section{Main results } \label{sec:mainresults}

We fix $r\geq 1$ and we henceforth write  $F_x $ for $F_{x,r} $  to facilitate the reading. In the same way we will
use the
notation $F_x^{-1} $, $\tilde \Delta_{P,m}$, $d_{P,m}$ since there is no ambiguity on the power term $r$.

Given an observation point $x \in  \R^d$, we introduce the modulus of continuity  $\tilde \omega_{x}$ of
$F_x^{-1}$ (possibly infinite) which is defined for any $v   \in (0,1]$ by
\begin{equation*}
\tilde \omega_{x} (v) := \sup_{(u,u') \in [0,1]^2 , \, \| u-u' \| \leq v} |F^{-1}_{x} (u) -  F^{-1}_{x} (u')|   .
\end{equation*}
Note that  the fact that $\tilde \omega_x $ is finite  is equivalent to the fact that  the support of $P$ is  bounded. 
An extensive discussion about the relation between the  measure $P$ and the modulus of continuity of $F_x^{-1}$
is proposed in Section~\ref{sec:AppliRd-discu}. The function $\tilde \omega_{x}$ being non decreasing and non negative,
it has a non negative limit $\tilde \omega_{x}(0^+)$  at zero.  In particular we do not assume here that $\tilde
\omega_{x}(0^+) =0$. In other terms we do not assume that $F_x^{-1}$ is continuous. We extend $\tilde \omega_{x}$ at zero
by taking $\tilde \omega_{x}(0) =\tilde \omega_{x}(0^+)$. 

In the following, it will be sufficient in our results  to consider upper bounds on the modulus of continuity, that is a
non negative function $  \omega_x$ on  $[0,1]$ such that $  \omega_x(v) \geq \tilde \omega(v)$ for any $v \in
[0,1]$. A modulus of continuity being a non decreasing function, we will assume  that such an upper bound $\omega_x$ 
is non decreasing  on $[0,1]$. For technical reasons and without loss of generality, we will also assume that $ \omega_x
$ is a continuous  function, which  takes its values in $[\omega(0),\omega(1)] \subset  \bar \R^+$.  For such a function
 $\omega_x$  we  also introduce its inverse function $\omega_x^{-1}$ which is defined on $[\omega(0),\omega(1)]$. We
extend this function to $\R^+$ by taking $\omega_x^{-1} (t) = 0 $ for any $ t \in [0,\omega(0)]$ and $\omega_x^{-1} (t)
= 1 $ for any $ t
\geq \omega(1)$. In particular, $\omega_x^{-1} \left(\omega_x(u)\right) = u$ for any $u \in [0,1]$. 

\medskip 

In this section, we show that the rate of convergence of  $ \Delta_{n,\frac
kn}(x) $ is of the order of  $\frac {\omega_x \left( \frac kn\right)}{\sqrt k}  $.

\subsection{Local analysis of the distance to the empirical measure in the bounded case}

We first consider the behavior of the distance to the empirical measure when the observations  $X_1,\dots,X_n$  are sampled from a distribution $P$  with compact support in $\R^d$. Let $F_x^{-1}$ be the quantile function of  $\|x - X_1 \|^r$ and let $\Delta_{n,\frac kn}$  be defined by \eqref{eq:defDelta}.
\begin{Theo} \label{Theo:ExpoDTMBounded}
Let $x$ be a fixed observation point in  $\R^d$. 
Assume that $\omega_x : [0,1] \rightarrow \R^+$ is an upper bound on the modulus of continuity of $F_x^{-1}$. Assume moreover that $\omega_x$ is an increasing and continuous function on $[0,1]$.  
\begin{enumerate}
\item For any $\lambda >0$, if $k <  \frac n 2$ then
\begin{multline}
\label{deviationMethodEmpirProc}
\frac{P\left( | \Delta_{n,\frac kn}(x) | \geq  \lambda \right) }{2}   \leq 
\exp \left( - \frac 1 {64 }   \frac { k \lambda ^2 } {\left[ F_x^{-1}\left(\frac k n\right)
 - F_x^{-1}(0)\right]^{2}}    
\right) 
+ \exp \left(  - \frac 3{16} \frac  {k \lambda} { F_x^{-1}\left(\frac k n\right)
 - F_x^{-1}(0)}       \right) \\ 
+ \exp \left(  - \frac{   n^2}{4 k }  \left\{ 
\omega_x^{-1}   \left(  k ^{1/4}    \sqrt{ \frac \lambda 8  \omega_x \left( \frac{2\sqrt{k}} n
\right)}     \right) 
\right\} ^2  \right) 
+   \exp \left(  -  \frac{   3 n}8   \omega_x^{-1}   \left(  k^{1/4}    \sqrt{ \frac \lambda 2
\omega_x\left( \frac{2\sqrt{k}} n \right)}     \right) \right)     \\
 +   \exp \left(  -  \frac{\sqrt k}8    \frac  \lambda  { \omega_x
\left(\frac{2\sqrt{k}} n \right)}      \right)  
+   \exp \left(  -  \frac {3   k^{3/4}} 4     \sqrt{ \frac  \lambda  {
2 \omega_x \left(\frac{2\sqrt{k}} n \right)}  } \right) := \square(\lambda) , 
\end{multline}
otherwise
\begin{multline*}
\frac { P\left( | \Delta_{n,\frac kn}(x) | \geq  \lambda \right)}{2}    \leq 
 \exp \left( - 2  n  \lambda^2  \left[ \frac{\frac k n}{  F^{-1}\left(\frac k n\right) - F^{-1}(0)} \right]^{2}   
\right) 
  \\  
  + \exp \left(  - 2 n \left\{  \omega_x^{-1} \left( \sqrt{    \frac k {\sqrt n}  \omega_x  \left( \frac 1{\sqrt{n}}
\right)      \frac \lambda 2 }\right)  \right\} ^2  \right)  
+     \exp \left(  -   \frac{k}  {\sqrt n   \omega_x  \left( \frac 1{\sqrt{n}} \right)  } \lambda      \right)    .
\end{multline*}
Furthermore, in all cases we have $ P\left(  | \Delta_{n,\frac kn}(x) | \geq \lambda \right)   =0$ 
for any $\lambda > \omega_x(1)$.
\item  Assume moreover that $\omega_x (u) / u $ is a non increasing function, then for any $k \in \{1,\dots, n\}$:
\begin{eqnarray}
 \E\left( | \Delta_{n,\frac kn}(x) | \right)     
 & \leq&  \frac {C}{\sqrt k}    \left\{  \left[ F_x^{-1}\left(\frac k n\right)
 - F_x^{-1}(0)\right]   +     \omega_x    \left(  \frac{\sqrt{k}}{n}  \right)  \right\}  \label{eq:InitilalBound}  \\
& \leq&    \frac {2C}{\sqrt k}    \omega_x \left( \frac kn\right)  \label{eq:FinalBound} ,
\end{eqnarray}
where $C$ is an absolute constant. 
\end{enumerate}
\end{Theo}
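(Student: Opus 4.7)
The plan is to exploit the representation \eqref{eq:DeltaW1} of $\Delta_{n,k/n}(x)$ in terms of the quantile process of $Y_i = \|x - X_i\|^r$ and reduce to the empirical distribution function of the $Y_i$'s. Writing $m = k/n$, $q = F_x^{-1}(m)$, $q_n = F_{x,n}^{-1}(m)$, the nonnegativity of the $Y_i$'s gives the duality identity $\int_0^m F^{-1}(u) du = \int_{F^{-1}(0)}^{F^{-1}(m)}(m - F(t))\,dt$, and hence
\begin{equation*}
m \, \Delta_{n,m}(x) \;=\; \int_{F_x^{-1}(0)}^{q \wedge q_n}[F_x(t) - F_{x,n}(t)]\, dt \;+\; R_n,
\end{equation*}
with $R_n$ a boundary term supported on $[q \wedge q_n, q \vee q_n]$. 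Two complementary controls are then needed: (i) a concentration bound for the integrated empirical process on the bulk interval $[F_x^{-1}(0), q]$, and (ii) a bound on the quantile fluctuation $|q_n - q|$, which will also dominate $R_n$.

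For (i), $n \int_{F_x^{-1}(0)}^{q}[F_x - F_{x,n}](t)\,dt$ is a centered i.i.d.\ sum of terms bounded in absolute value by $F_x^{-1}(k/n) - F_x^{-1}(0)$ and with variance at most $(k/n)[F_x^{-1}(k/n) - F_x^{-1}(0)]^2$; Bernstein's inequality then produces the first two exponential terms of \eqref{deviationMethodEmpirProc}. For (ii), the probability integral transform $U_i = F_x(Y_i)$ with empirical d.f.\ $G_n$ yields $|q_n - q| \leq \omega_x(|G_n^{-1}(m) - m|)$, reducing the problem to the \emph{uniform} empirical quantile process at index $m$, whose fluctuation scale is $\sqrt{k}/n$. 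Bernstein applied to the binomial count $n G_n(q')$, for $q'$ over an $\omega_x^{-1}(\cdot)$-window around $q$, gives the two exponential terms with $\omega_x^{-1}$ inside; applying Bernstein once more to the product $|q_n - q| \cdot \sup|F_n - F|$ that governs $R_n$ gives the last two terms. The $2\sqrt{k}/n$ inside $\omega_x$ comes from a factor-$2$ slack in reducing the two-sided binomial control to Bernstein. The second case $k \geq n/2$ replaces the localized controls by the uniform DKW-Massart inequality, which is already tight when $k$ is a constant fraction of $n$, removing three of the six terms; the assertion $\P(|\Delta_{n,k/n}| > \omega_x(1)) = 0$ follows since $F_x^{-1}$ and $F_{x,n}^{-1}$ both range within $\omega_x(1)$ of $F_x^{-1}(0)$.

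For Part 2, I would integrate the deviation bound through $\E|\Delta_{n,k/n}(x)| = \int_0^{\omega_x(1)} \P(|\Delta_{n,k/n}(x)| > \lambda)\, d\lambda$. Each Bernstein pair contributes an integral at its natural scale divided by $\sqrt{k}$: the first pair yields $[F_x^{-1}(k/n) - F_x^{-1}(0)]/\sqrt{k}$, and the two modulus-based pairs each yield $\omega_x(\sqrt{k}/n)/\sqrt{k}$ after the change of variable $\lambda \mapsto \omega_x(v) \cdot \omega_x(2\sqrt{k}/n)/\sqrt{k}$; the hypothesis that $\omega_x(u)/u$ is non-increasing is precisely what makes these integrals converge to a universal constant without logarithmic losses. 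Summing yields \eqref{eq:InitilalBound}, and \eqref{eq:FinalBound} then follows from $F_x^{-1}(k/n) - F_x^{-1}(0) \leq \omega_x(k/n)$ (by definition of $\omega_x$ as an upper bound on the modulus) together with $\omega_x(\sqrt{k}/n) \leq \omega_x(k/n)$ for $k \geq 1$ (monotonicity). The main difficulty is the bookkeeping in (ii): one must propagate the composition $\omega_x \circ G_n^{-1}$ through local Bernstein controls without losing the factors of $\sqrt{k}/n$ that a naive uniform supremum would blur into $1/\sqrt{n}$, all while allowing $\omega_x$ to be merely monotone and possibly discontinuous at $0$.
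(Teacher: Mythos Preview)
Your proposal is correct and follows essentially the same route as the paper. The decomposition into a bulk integral over $[F_x^{-1}(0),F_x^{-1}(k/n)]$ plus a boundary piece is exactly the paper's $A+B$ split (Lemma~\ref{DecomposDeltaJerome} and inequality~\eqref{decompExpoBound}), and your Part~2 argument (integrate the tail bound, use that $\omega_x(u)/u$ is nonincreasing to make the $\omega_x^{-1}$–integrals converge to absolute constants) is precisely what the paper does.

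Two small discrepancies are worth flagging. First, for the bulk term you apply Bernstein directly to the centered i.i.d.\ sum $n\int_{F^{-1}(0)}^{F^{-1}(k/n)}(F-F_n)$ with variance proxy $(k/n)[F^{-1}(k/n)-F^{-1}(0)]^2$; the paper instead bounds $A$ by $[F^{-1}(k/n)-F^{-1}(0)]\cdot\sup_{[0,k/n]}|\mathbb G_n-\mathrm{id}|$ and invokes the localized maximal inequality of Proposition~\ref{prop:devEmpProcunif}. Both give the same first two exponential terms; your route is slightly more direct. Second, for the boundary term the paper bounds $B$ by the product $\frac{n}{k}\,[q_n-q]\,\bigl[\tfrac{k}{n}-F_n(q)\bigr]$ (a \emph{pointwise} empirical-process deviation at $q$, not a supremum), and then uses the AM--GM trick $2B_0\le(\theta\,\cdot)^2+(\cdot/\theta)^2$ with an explicit balancing parameter $\theta$. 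The specific choice $\theta^2=\sqrt{k/n}\big/\bigl(\sqrt{n}\,\omega_x(2\sqrt{k}/n)\bigr)$, made to equalize the two contributions in the expectation computation, is what produces the argument $2\sqrt{k}/n$ inside $\omega_x$; it is not a factor-$2$ slack from a two-sided binomial bound as you guessed. Your sketch leaves this balancing step implicit, but nothing in your outline would fail once it is written out.
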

The proof of the Theorem is based on a particular decomposition of $\Delta_{n,\frac kn}(x)$, see
Lemma~\ref{DecomposDeltaJerome} in Appendix~\ref{subsec:proofStab}. This decomposition allows us to consider the deviations of the empirical process rather than the deviations of the quantile process. The proof is given in Appendix~\ref{app:proofs}.
\medskip

Let us now comment on the final bound on expectation \eqref{eq:FinalBound}. This bound can be rewritten as follows:
\begin{equation} \label{eq:mainupperbound}
\E\left|  \Delta_{n,\frac kn}(x) \right|  \lesssim
\frac nk  \frac 1 {\sqrt n} \sqrt{  \frac k n } \omega_x \left( \frac
kn\right) .
\end{equation}
The term $  \frac nk$ comes from the definition of the DTM, it is the renormalization by the mass proportion $  \frac kn$. The term $\frac 1 {\sqrt n}$ corresponds to a classical parametric rate of convergence. The term $\sqrt{ \frac k n}$ is obtained thanks to a local analysis of the empirical process. More precisely, it derives from a sharp control of the variance of the supremum over the uniform empirical process. The term $\omega_x \left( \frac kn\right)$ corresponds to the statistical complexity of the problem, expressed in terms of the regularity of the quantile function $F_x^{-1}$.

Theorem~\ref{Theo:ExpoDTMBounded} can be interpreted with either an asymptotic or a non asymptotic point of view. Taking a non asymptotic approach, we consider  $n$ as fixed. A first result here is that we obtain sharp upper bounds for small values of $\frac kn$. In the most favorable case where $\tilde \omega_x (u) \sim u$, we see  in \eqref{eq:FinalBound}
that an upper bound of the order of $\frac 1n$ is reached.  This is direct consequence of the local analysis we use to control the empirical process in the neighborhood of the origin. As mentioned before, assuming that $\frac kn$ is very small corresponds to the realistic situation where we use the DTM to clean the support from a small proportion of outliers.

Now, taking an asymptotic approach, a second result of Theorem~\ref{Theo:ExpoDTMBounded} is that it  allows us to
consider the asymptotic behavior of $\Delta_{n,\frac kn}(x)$ under all possible regimes, that is for all sequences
$(k_n)_{n \in \N}$. For instance, with the classical approach where $k_n$ is such that  $k_n/ n =m$  for some fixed
value $m \in (0, 1)$, we then
obtain the parametric rate of convergence $1/ \sqrt n$, as in the asymptotic functional results given in
\cite{chazal2014robust}. 

Another key fact about Theorem~\ref{Theo:ExpoDTMBounded} is that the upper bound \eqref{eq:InitilalBound}  
depends on the regularity of $F_x^{-1}$ through the function   
$$ \Psi_x : m  \in (0,1) \mapsto \frac{  \omega_x(m)}{ \sqrt m} .$$
Moreover,  if $\omega(0^+) = 0$, we see that the upper bound \eqref{eq:InitilalBound} 
depends on the
regularity of $F_x^{-1}$ only at $0$ for $n$ large enough. For instance, if $k_n$ is such that  $k_n/ n =m$  for some fixed value $m \in (0, 1)$ such that  $F_x^{-1}(m) > F_x^{-1}(0)$, coming back to
\eqref{eq:InitilalBound}, we find that for $n$ large enough:
$$ \omega_x    \left(  \frac{\sqrt{k_n }}{n} \right)   =  \omega_x    \left( \frac{ 1}{\sqrt n} m \right) < F_x^{-1}\left(m \right)
- F_x^{-1}(0).$$
In this context, the right hand term of Inequality \eqref{eq:InitilalBound} is  of the order
of $ \frac {\widetilde \Psi_x(\frac {k_n}n)}{\sqrt {k_n}} $ where 
$$ \widetilde \Psi_x  : m  \in (0,1) \mapsto  \frac{F_x^{-1}(m) - F_x^{-1}(0)}{ \sqrt{m}}  .$$

\medskip

We now give additional remarks   about  Theorem~\ref{Theo:ExpoDTMBounded}.
\begin{Remark} \label{rqHolder}  If the quantile function $F_{x}^{-1}$ is $\eta$-H\"{o}lder, then 
$\omega_x (u) = A u ^{\eta} $ for some constant $A  \geq 0$ and thus we have
$$
 \E\left( | \Delta_{n,\frac kn}(x) | \right)     
\lesssim    \frac 1{\sqrt n} \left[ \frac k n \right] ^  {\eta - 1/2}  .
$$
Remember that H\"{o}lder functions with power $\eta > 1$ are constants, we can thus assume that $\eta \leq 1$.
\end{Remark}
\begin{Remark}
Assuming that $\omega_x(u) / u$ is a non increasing function roughly means that $\omega_x$ is a concave function. Our
result is thus satisfied if we can find an concave function which is an upper bound on the modulus of continuity of the
quantile function. We show in Section~\ref{sub:abstandard} that it is satisfied for a large  class of measures.
\end{Remark}
\begin{Remark}
For values of $\frac kn$ not close to zero, the rate is consistent with  the upper
bound~\eqref{upperW1} deduced from the approach based on the stability results (see Appendix~\ref{sec:RatesStability}). However,
Theorem~\ref{Theo:ExpoDTMBounded} is  more satisfactory since it  describes the statistical
complexity of
the problem through the  regularity of the quantile function. 
\end{Remark}
\begin{Remark} The application $u \mapsto u^{1/r}$ is $1/r$- H\"{o}lder on $\R^+$ with H\"{o}lder constant $1$ since  $1/r < 1$.
It yields: 
\begin{equation} \label{DeltaDeltatilde}
 | \tilde \Delta_{n,\frac kn,r}(x) | \leq  | \Delta_{n,\frac kn,r}(x) | ^{1/r}.
\end{equation}
where $\tilde \Delta_{n,\frac kn,r}(x)$ is defined by \eqref{eq:defDeltatilde}.
We deduce an expectation bound on $\tilde  \Delta_{n,\frac kn,r}(x) $ from 
Jensen's Inequality and Inequality~\eqref{DeltaDeltatilde}:
$$ \E\left( | \tilde \Delta_{n,\frac kn,r}(x) | \right)   \lesssim   \left[  \frac 1 {\sqrt n} \left[ \frac k n
\right] ^  {- 1/2}   \omega_x \left( \frac kn\right)  \right]^{1/r}  .$$ 
\end{Remark}
\begin{Remark} As already mentioned before, to prove Theorem~\ref{Theo:ExpoDTMBounded}, we consider the deviations of the empirical process rather than the deviations of the quantile process. Indeed, the more direct approach that consists in directly controlling the deviations  of the quantile process gives slower rates. More precisely, using Proposition~\ref{prop:Quantileunif} given in Appendix~\ref{sec:Gn} borrowed from
\cite{shorack2009empirical}, it can be shown
that
$$
 \E\left( | \Delta_{n,\frac kn}(x) | \right)  \lesssim    \omega_x    \left(  \frac{\sqrt{ k}}{n}  \right) .
$$
For instance, if $\omega_x (u) = A u ^{\eta} $, we obtain $
 \E\left( | \Delta_{n,\frac kn,r}(x) | \right)       \lesssim   \left(  \frac{\sqrt k} n  \right)^{\eta}  
$ which is slower than the rate given in Remark~\ref{rqHolder}.
\end{Remark}

\medskip

To complete the results of Theorem~\ref{Theo:ExpoDTMBounded},  we give below a lower bound using Le Cam's lemma (see Lemma \ref{Lem:Lecam} in Appendix~\ref{sec:Gn}). Let $\omega$ be a
continuous and increasing function on $[0,1]$ and let $x \in\R^d$.
We introduce that class of probability measures:
 $$ \mathcal P_{\omega} := \left\{P \: \textrm{is a probability measure on }\R^d \textrm{ such that } \omega(u) \geq
\tilde \omega_x (u)  \textrm{ for any  } u \in [0,1]   \right\}. $$ 
In the previous definition, the function $ \tilde \omega$ is as before the modulus of continuity of  the quantile
function of the    distribution of the push-forward measure of $P$ by the function $y\mapsto \| y - x \| ^r$.
\begin{Prop}  \label{prop:lowerboundDTM}
Assume that there exists $P \in  \mathcal P_{\omega}$,  $c >0$ and $\bar u \in (0,1)$,  such that 
\begin{equation} \label{OmegaEstUnQuantile}
c \left[ F_x^{-1} (u) - F_x^{-1} (0)\right] \geq \omega(u)  \quad \textrm{ for any }u \in (0,\bar u] .
\end{equation}
 Then,  there exits  a constant $C$ which only depends on $c$, such that  for any $k \leq \bar u n$.
\begin{eqnarray*}
\sup _{ P \in  \mathcal P_{\omega}} \E\left( |  \Delta_{n,\frac kn,r}(x) | \right)  & \geq  & 
 \inf _{\hat d_n(x)} \sup _{ P \in  \mathcal P_{\omega}}   \E\left( \left| \hat d_n^r (x)-  d_{P,m,r}^r(x) 
\right|\right) \\
   &\geq  &  C \frac n k   \frac 1n  \omega \left(\frac {k-1} n \right),
\end{eqnarray*}
where the infimum is than over all the estimator $\hat d_n(x)$ of $d_{P,m,r}(x)$ defined from a sample $X_1,\dots, X_n$ 
of  distribution $P$.
\end{Prop}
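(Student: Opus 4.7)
The plan is to apply Le~Cam's two point lemma (Lemma~\ref{Lem:Lecam}), which reduces the minimax lower bound to exhibiting two candidates $P_0,P_1\in\mathcal P_\omega$ such that their DTMs $d_{P_i,m,r}^r(x)$ are well separated while the product measures $P_0^{\otimes n}$ and $P_1^{\otimes n}$ stay within total variation strictly less than one. The tensorisation inequality $\TV(P_0^{\otimes n},P_1^{\otimes n})\le n\,\TV(P_0,P_1)$ tells us that a perturbation of $P_0$ by a mass of order $1/n$ will be admissible.

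Concretely, I would take $P_0=P$ (the measure supplied by the hypothesis) and write $G_0=F_x^{-1}$ for the quantile function of the push-forward $\mu_0$ of $P_0$ under $y\mapsto\|y-x\|^r$. Fix $\eps=1/(4n)$, pick a point $y^\star$ with $\|y^\star-x\|^r=G_0(0)$, and define $P_1$ by transporting a $P_0$-mass $\eps$ from the outer shell $\{y:\|y-x\|^r\ge G_0(1-\eps)\}$ onto a Dirac at $y^\star$. A direct inspection shows that the push-forward $\mu_1$ has quantile function
\[
G_1(u)=G_0(0)\ \text{if}\ 0\le u\le\eps,\qquad G_1(u)=G_0(u-\eps)\ \text{if}\ \eps<u\le 1,
\]
whose modulus of continuity is dominated by that of $G_0$; hence $P_1\in\mathcal P_\omega$ and $\TV(P_0,P_1)\le 2\eps=1/(2n)$, which yields $\TV(P_0^{\otimes n},P_1^{\otimes n})\le 1/2$.

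For the DTM separation, a change of variables gives
\[
\int_0^m G_0(u)\,du-\int_0^m G_1(u)\,du=\int_{m-\eps}^m G_0(u)\,du-\eps\,G_0(0)\ \ge\ \eps\bigl[G_0(m-\eps)-G_0(0)\bigr],
\]
because $G_0$ is non-decreasing. Since $k\le\bar u n$ forces $m-\eps\le\bar u$, hypothesis~\eqref{OmegaEstUnQuantile} applies and yields $G_0(m-\eps)-G_0(0)\ge\omega(m-\eps)/c\ge\omega((k-1)/n)/c$ (the last inequality follows from $m-\eps=(4k-1)/(4n)\ge(k-1)/n$ and the monotonicity of $\omega$). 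Dividing by $m=k/n$ produces
\[
d_{P_0,m,r}^r(x)-d_{P_1,m,r}^r(x)\ \ge\ \frac{1}{4ck}\,\omega\!\left(\tfrac{k-1}{n}\right),
\]
and combining this with Le~Cam's lemma and $1-\TV(P_0^{\otimes n},P_1^{\otimes n})\ge 1/2$ delivers the announced bound with $C=1/(16c)$.

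The main delicate point is the choice to move mass \emph{towards} the minimum rather than away from it: the shifted-quantile construction automatically inherits its modulus of continuity from $G_0$ (so $P_1$ stays in $\mathcal P_\omega$), while the separation of DTMs can be invoked at level $m-\eps\asymp k/n$ (where the hypothesis ensures a gap of order $\omega(k/n)$), instead of at level $\eps\asymp 1/n$ (where $\omega$ might be negligible and would lead to a non-matching lower bound).
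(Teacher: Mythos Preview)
Your proof is correct and follows essentially the same construction as the paper: both take $P_0=\bar P$ and build $P_1$ by transferring a mass of order $1/n$ from the far tail $\{y:\|y-x\|^r\ge G_0(1-\eps)\}$ to a Dirac at the point closest to $x$, then observe that the resulting quantile function is a right-shift of $G_0$ (so $P_1\in\mathcal P_\omega$), and finally apply Le~Cam's two-point method. The only notable difference is in how Le~Cam's inequality is invoked: the paper uses the version stated in Lemma~\ref{Lem:Lecam}, with the factor $[1-\TV(P_0,P_1)]^{2n}$ (taking $\eps=1/n$ and using $(1-2/n)^{2n}\to e^{-4}$), whereas you use the tensorisation bound $\TV(P_0^{\otimes n},P_1^{\otimes n})\le n\,\TV(P_0,P_1)$ together with a one-shot Le~Cam bound on the product measures. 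Both routes are standard and yield the same conclusion up to the value of the absolute constant $C$.
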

The Assumption \eqref{OmegaEstUnQuantile} is not very strong. It  means that $\omega$ is not a too large upper bound on
the modulii of continuity of the quantile functions. More precisely, it says that  there exists a distribution $P \in
\mathcal P_{\omega}$ for which  $\omega$ can be comparable to the modulus of continuity of the quantile functions
$F_x^{-1}$ in the neighborhood of the origin.

Note that this lower bound matches with the upper bound of Theorem~\ref{Theo:ExpoDTMBounded}
 when $k$ is very small since it is of the order of  $\omega \left(\frac {k} n \right)$.  Providing the correct
lower bound for all values of $k$ is not obvious. As far as we know  there is no standard method in the literature for computing
lower bounds for this kind of functional and we consider that this issue is beyond the scope of this paper.

\subsection{Local analysis of the distance to the empirical measure in the unbounded case}

The previous results  provide a  description of the fluctuations and mean rates of convergence of the empirical distance to measure. However, when the support of $P$ is not bounded, the quantile function $F_x^{-1}$ tends to infinity
at $1$ and the modulus of continuity of $F_x^{-1}$ is not finite. In such a situation, Theorem~\ref{Theo:ExpoDTMBounded} can not be applied. We now propose a second result about the fluctuations of the DTEM, under weaker
assumptions on the regularity of  $F_x^{-1}$. The following result shows that under a weak moment assumption,  the rate of convergence is the same as for the bounded case, up to a term decreasing exponentially  fast to zero. 
\begin{Theo} \label{Theo:ExpoDTMUnBounded}
Let  $\bar m \in (0,1)$  and some observation point $x \in \mathcal \R^d$. Assume that $ \omega_{x,\bar m}  $ is
an upper bound of the modulus of continuity of $F_x^{-1}$ on $(0,\bar m]$: for any $u,u' \in [0,\bar m]^2$,
\begin{equation}  
\label{eq:OmegaPonc} |F^{-1}_{x} (u) -  F^{-1}_{x} (u')| \leq  \omega_{x,\bar m} ( |u- u'|) .
\end{equation}
 Assume moreover that $\omega_{x,\bar m}$ is increasing and continuous function on
$[0,\bar m]$. 
Then, for any  $k < n\left(\frac 12 \wedge \bar m\right)$ and any $\lambda>0$:
\begin{multline}
\label{deviationUnbounded}
\frac{P\left( | \Delta_{n,\frac kn}(x) | \geq  \lambda \right) }{2}   \leq 
\square(\lambda) + 
  \exp \left(  -  \frac{ \sqrt k }{ 8 }     \exp \left[ \frac{n^2}{4k} \left( \bar m - \frac kn \right)^2   \right] 
   \lambda \right)  \\  +    
     \exp \left(  -  \frac 38   \exp \left[ \frac{n^2}{8k} \left( \bar m - \frac kn \right)^2   \right]  
    k^{\frac 38}  \sqrt{ \frac \lambda 2} \right) \\
+  \left\{2 \exp \left(- \frac{n^2}{2k} \left\{   \bar m - \frac kn \right\}^2   \right) \right\} \wedge \left\{ 
\frac{ {n \choose k-1}}2  \left[1 - F  \left( \sqrt{ \frac{ \lambda}{ 6}} \frac kn \right)   \right] ^{n-k+1} \right\}
\end{multline}
where $\square(\lambda) $ is the upper bound given in Theorem~\ref{Theo:ExpoDTMBounded}, with $\omega_{x}$
replaced by $\omega_{x,\bar m}$. 
Assume moreover that $\omega_{x,\bar m} (u) / u $ is a non increasing function and that $P$ has a moment of order $r$.
Then
\begin{equation*} \label{BorneEsp}
\E\left|  \Delta_{n,\frac kn}(x) \right|  \leq
 \frac C{\sqrt n} \left[ \frac k n \right] ^  {- 1/2}    \left\{  \left[ F_x^{-1}\left(\frac k n\right)
 - F_x^{-1}(0)\right]
 +        \omega_{x,\bar m} \left( \frac{\sqrt{k}}{n}  \right)  \right\}
 + C_{x,r,\bar m} \sqrt k \exp \left[- \frac{n^2}{4k} \left( \bar m - \frac kn \right)^2   \right]. 
\end{equation*}
where $C$ is an absolute constant and $C_{x,r,\bar m}$ only depends on the quantity $\E {\|X - x \|}^r $ and on $\bar
m$.
\end{Theo}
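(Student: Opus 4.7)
The strategy is to reduce to the bounded case of Theorem~\ref{Theo:ExpoDTMBounded} by conditioning on a high-probability event on which the relevant order statistic $\|x-\X_n\|^r_{(k)}$ does not exceed the level $F_x^{-1}(\bar m)$. Using the probability integral transform, set $U_i := F_x(\|X_i-x\|^r)$, which are (essentially) i.i.d.\ uniform on $[0,1]$, and let $G_n, G_n^{-1}$ denote the associated empirical distribution and quantile so that $F_{x,n}^{-1}(u) = F_x^{-1}(G_n^{-1}(u))$. Introduce the good event
$$ A := \{ G_n(\bar m) \ge k/n \} = \{ G_n^{-1}(k/n) \le \bar m \}. $$
Since $k/n < \bar m$, a one-sided Bernstein bound for the binomial $nG_n(\bar m)$ of mean $n\bar m$ yields $\P(A^c) \le \exp[-n^2(\bar m - k/n)^2/(2k)]$, which gives the first of the two alternatives on the last line of \eqref{deviationUnbounded}. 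The combinatorial alternative $\binom{n}{k-1}[1-F(\cdot)]^{n-k+1}$ comes from the closed-form density of the $k$-th order statistic of $\|X-x\|^r$ combined with the fact that $|\Delta_{n,k/n}(x)| \ge \lambda$ forces the $k$-th nearest neighbor to exceed a threshold of order $\sqrt\lambda\,k/n$, so taking the minimum of the two gives the stated bound.

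On $A$, the decomposition of $\Delta_{n,k/n}(x)$ supplied by Lemma~\ref{DecomposDeltaJerome} only involves increments of $F_x^{-1}$ with both endpoints in $[0,\bar m]$, so the assumption \eqref{eq:OmegaPonc} suffices and the argument of Theorem~\ref{Theo:ExpoDTMBounded} can be replayed verbatim with $\omega_x$ replaced by $\omega_{x,\bar m}$, producing the $\square(\lambda)$ contribution. The two additional bounded-event terms of \eqref{deviationUnbounded} carrying the prefactor $\exp[n^2(\bar m - k/n)^2/(4k)]$ arise at those specific steps of the bounded-case proof where a global $L^\infty$ control on the range of $F_x^{-1}$ was used: in the unbounded setting such a control is only available on $A$, and trading it for the Bernstein tail $\exp[-n^2(\bar m - k/n)^2/(2k)]$ replaces the reciprocal moduli $1/\omega_x(2\sqrt k/n)$ appearing in the last two terms of $\square$ by the stated exponential prefactor.

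For the expectation bound I split $\E|\Delta_{n,k/n}(x)| = \E|\Delta|\indic{A} + \E|\Delta|\indic{A^c}$. The first piece is obtained by integrating the deviation bound of Theorem~\ref{Theo:ExpoDTMBounded} against $d\lambda$, exactly as in its proof, which produces the first summand of the theorem. The second piece uses the crude inequality
$$ |\Delta_{n,k/n}(x)| \le \frac{1}{k}\sum_{i=1}^n \|X_i-x\|^r + d_{P,k/n}^r(x), $$
Cauchy--Schwarz and the $r$-moment assumption to bound $\E|\Delta|\indic{A^c}$ by $(\E|\Delta|^2)^{1/2}\sqrt{\P(A^c)}$; the factor $\sqrt{k}$ in the residual $C_{x,r,\bar m}\sqrt{k}\exp[-n^2(\bar m - k/n)^2/(4k)]$ then comes from the DTEM normalization $n/k$ together with the halving of the Bernstein exponent upon taking the square root of $\P(A^c)$, while the constant $C_{x,r,\bar m}$ absorbs $\E\|X-x\|^r$ and a factor depending on $\bar m$.

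The main obstacle is the bookkeeping in the second paragraph: one must carefully trace through the proof of Theorem~\ref{Theo:ExpoDTMBounded} and distinguish the steps that only use $\omega_x$ restricted to $[0,\bar m]$ (which transfer unchanged) from those that implicitly used a global sup-norm bound on $F_x^{-1}$ (which must be replaced by a conditional argument on $A$, producing the exponential prefactors). Once this separation is settled, the rest reduces to routine applications of Bernstein's inequality and Cauchy--Schwarz.
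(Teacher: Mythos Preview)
Your high-level idea --- splitting on the event $\{\mathbb G_n^{-1}(k/n) \le \bar m\}$ --- is exactly what the paper does, but the execution has two concrete gaps.

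First, your account of the two extra exponential terms in \eqref{deviationUnbounded} is incorrect. In the paper, the term $B$ from \eqref{decompExpoBound} is split pointwise as $\tilde B + B_3$ according to that event; $\tilde B$ is handled exactly as in Theorem~\ref{Theo:ExpoDTMBounded} and contributes $\square(\lambda)$. The residual $B_3$ is then split \emph{again}, via a free parameter $\theta_3$, into $B_4$ (the $F^{-1}(\mathbb G_n^{-1}(k/n))$ part carrying the indicator $\{\mathbb G_n^{-1}(k/n) > \bar m\}$) and $B_5$ (the purely uniform-process part $\frac{n}{k}[k/n - \mathbb G_n(k/n)]$). The two extra terms come from Bernstein bounds on $B_5$, and the prefactor $\exp[n^2(\bar m - k/n)^2/(4k)]$ appears only because $\theta_3$ is later chosen to balance the $B_4$ and $B_5$ contributions in the expectation computation. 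These terms have nothing to do with a ``global $L^\infty$ control on $F_x^{-1}$''.

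Second, and more seriously, your Cauchy--Schwarz argument for $\E|\Delta|\indic{A^c}$ fails under the stated hypothesis. With your crude bound $|\Delta| \le \frac{1}{k}\sum_i\|X_i-x\|^r + d_{P,k/n}^r(x)$, controlling $\E|\Delta|^2$ requires $\E\|X-x\|^{2r} < \infty$, whereas the theorem assumes only a moment of order $r$. The paper avoids this by integrating the deviation bound on $B_3$ directly: the $r$-moment gives the tail estimate $t(1-F(t)) \le \E\|X-x\|^r$, which is inserted into the Beta-distribution bound on $B_4$ for large $\lambda$ (and combined with the concentration bound for small $\lambda$); the resulting integral and the $B_5$ integral are then balanced by choosing $\theta_3^2 = k^{-1/2}\exp[n^2(\bar m - k/n)^2/(4k)]$. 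It is this optimization of $\theta_3$ that produces the $\sqrt{k}$ prefactor, not the DTEM normalization $n/k$ as you claim.
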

As for the bounded case, if $\omega(0^+) = 0$ and if $F_x^{-1}(m) > F_x^{-1}(0)$, then the rate of convergence  is 
still of the order of $ \frac {\widetilde \Psi_x(m)}{\sqrt n} $. Note that this result is interesting even when the measure $P$ is supported on a compact set. Indeed, assume that the
quantile function $F_x^{-1}$ is not continuous, then $\tilde \omega_x^{-1}(0) >0 $. However, if  $F_x^{-1}$ is smooth
in the neighborhood of zero, for $\bar m$ small enough the assumption \eqref{eq:OmegaPonc} may be satisfied with a
function $\omega_{x,\bar m}$ which can be very small in the neighborhood of zero. Theorem~\ref{Theo:ExpoDTMUnBounded} may  provide better
bounds in this context than those given by Theorem~\ref{Theo:ExpoDTMBounded}. This fact also
confirms that the deviations of the DTEM mainly relies on the local regularity of the quantile function  $F_{x}^{-1}$ at
the origin rather then on its global regularity.

\subsection{Convergence of the distance to the empirical measure for the sup norm}

The previous results address the pointwise fluctuations of the DTEM. We now consider the same problem for the sup norm metric on a compact domain $\mathcal D$ of $\R^d $. Let $N(\mathcal D, t)$ be the covering number of $\cal D$, that is the smallest number of balls
$B(x_i,t)$ with $x_i \in \cal D$, such that $\bigcup_{i} B(x_i,t) \supset \cal D$. Since the domain $\cal D$ is compact,
 there exists two positive constants $c$ and $\nu \leq d$ such that
for any $t >0$ :
  $$N(\mathcal D, t) \leq c t^{-\nu} \vee1.$$
We assume that there exists a function $\omega_{\mathcal D} : (0,1] \rightarrow \R^+$ which uniformly upper bounds
the modulus of continuity of the quantile functions $(F_x^{-1})_{x \in \mathcal D}$: for any $u,u' \in (0,1]^2$ and for
any $x \in \mathcal D$: 
\begin{equation*}
\label{eq:OmegaUnif} |F^{-1}_{x} (u) -  F^{-1}_{x} (u')| \leq  \omega_{\cal D}( |u- u'|) .
\end{equation*}
We also assume as before  that $\omega_{\mathcal D}$ is an increasing and continuous function on  $[0,1]$. 
\begin{Theo} \label{Theo:ExpoDTMBoundedUniform}
Under the previous assumptions, for any $k \leq \frac n2$,
\begin{eqnarray*}
 \E\left( \sup_{x \in \mathcal D}| \Delta_{n,\frac kn}(x) | \right)     
 & \leq&  \frac C{\sqrt n} \left[ \frac k n \right] ^  {- 1/2}     \left[ F_x^{-1}\left(\frac k n\right)
 - F_x^{-1}(0)\right]   \log^+  \left(\left[  
 \frac k{\left[ F_x^{-1}\left(\frac k n\right)
 - F_x^{-1}(0)\right]^{2}}  \right]^{\nu +5 }  \right)       \\
& & \hskip 1cm +  \:  \frac  C{\sqrt n} \left[ \frac k n \right] ^  {- 1/2}    \omega_{\mathcal D}    \left( 
\frac{\sqrt{k}}{n}  \right) \log^+ \left( \left[     \frac{ \sqrt k} { \omega_{\mathcal D} \left(  \frac{ \sqrt {k} } n
\right)}   \right] ^ { \nu-1}   \right)   \\
& \leq & \frac C{\sqrt n} \left[ \frac k n \right] ^  {- 1/2}    \omega_{\mathcal D}    \left(  \frac{\sqrt{k}}{n} 
\right)    \log^+  \left( \frac { k^{\nu +5 }}{  \left[ F_x^{-1}\left(\frac {\sqrt k}n\right)  - F_x^{-1}(0) \right]
^{2 \nu + 5 } \wedge \left[  \omega_{\mathcal D} \left(  \frac{ \sqrt {k} } n \right) \right]^{\nu -1} }  \right)      
\end{eqnarray*}
where $\log^+(u)  =( \log u) \vee 1$ for any $u \in \R^+$ . The constant $C$ is an absolute constant if $r=1$ otherwise
it depends on $r$ and on the  Hausdorff distance between $\cal D$ and the support of $P$.
\end{Theo}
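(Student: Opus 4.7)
The plan is to combine the pointwise deviation bound from Theorem~\ref{Theo:ExpoDTMBounded} with a covering argument on $\mathcal D$ and integrate the resulting tail.

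First I would establish a Lipschitz property on $\mathcal D$. Since $y \mapsto \|y-x\|^r$ is $r R^{r-1}$-Lipschitz in $x$ on any set where $\|y-x\| \leq R$, taking $R = \diam(\mathcal D) + \dhaus(\mathcal D, \supp P)$ makes both $x \mapsto d^r_{P,m,r}(x)$ and $x \mapsto d^r_{P_n,m,r}(x)$ Lipschitz with a common constant $L$. For $r=1$ one has simply $L=1$ (whence the absolute-constant statement in that case); for general $r$ the constant $L$ depends on $r$ and $R$. By the triangle inequality,
\[
|\Delta_{n,\frac{k}{n}}(x) - \Delta_{n,\frac{k}{n}}(y)| \leq 2L\|x - y\| \quad \text{for all } x,y \in \mathcal D.
\]

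Next, fix $\eps > 0$ and pick an $\eps$-net $\{x_1, \ldots, x_N\}$ of $\mathcal D$ with $N \leq c\eps^{-\nu} \vee 1$. The Lipschitz estimate gives $\sup_{x \in \mathcal D} |\Delta_{n,\frac{k}{n}}(x)| \leq \max_{i \leq N} |\Delta_{n,\frac{k}{n}}(x_i)| + 2L\eps$. A union bound combined with the pointwise tail $\square(\lambda)$ of Theorem~\ref{Theo:ExpoDTMBounded}, applied with $\omega_x$ replaced by the uniform majorant $\omega_{\mathcal D}$, yields
\[
\P\!\left(\max_i |\Delta_{n,\frac{k}{n}}(x_i)| \geq \lambda\right) \leq 2N\,\square(\lambda).
\]
I would then compute $\E[\max_i |\Delta(x_i)|] = \int_0^\infty \P(\max_i |\Delta(x_i)| > \lambda)\,d\lambda$ by splitting into six pieces, one per term of $\square(\lambda)$. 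Each Gaussian-type piece of the form $N e^{-ck\lambda^2/a^2}$ integrates to $a\sqrt{\log N}/\sqrt{ck}$, while each exponential-type piece of the form $N e^{-c\sqrt k\,\lambda/b}$ integrates to $b\log N/(c\sqrt k)$; with $\log N \lesssim \nu \log(1/\eps)$, the exponential-type pieces are the ones that dictate the form of the logarithmic prefactor in the statement.

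The final step is to tune $\eps$. Choose it as a small polynomial in $1/n$ and in the two scales $F_x^{-1}(k/n) - F_x^{-1}(0)$ and $\omega_{\mathcal D}(\sqrt k/n)$, so that the discretization error $2L\eps$ is negligible relative to the pointwise rate $[F_x^{-1}(k/n)-F_x^{-1}(0)]/\sqrt k + \omega_{\mathcal D}(\sqrt k/n)/\sqrt k$. This choice produces the two $\log^+$ factors in the bound, with exponents $\nu + 5$ and $\nu - 1$ arising from counting how many times $\log(1/\eps)$ appears when one plugs $\eps$ back into each of the six integrated terms. The second, simplified inequality then follows from $F_x^{-1}(k/n) - F_x^{-1}(0) \leq \omega_{\mathcal D}(k/n)$, from the monotonicity of $u \mapsto \omega_{\mathcal D}(u)/u$, and from merging the two logarithms into a single $\log^+$ of a polynomial ratio.

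The main obstacle is the bookkeeping: $\square(\lambda)$ contains six heterogeneous terms (linear, quadratic and square-root in $\lambda$, and two involving $\omega_x^{-1}$), each of which, after union bound and integration, picks up a different logarithmic power of $1/\eps$; obtaining exactly the exponents $\nu+5$ and $\nu-1$ requires carefully tracking how $\eps$ is chosen in each term, as well as verifying that the $\omega_{\mathcal D}^{-1}$ terms (which are subgaussian in $\lambda$ through a composition) do not force a larger $\eps$ and so are absorbed into the two dominant terms stated.
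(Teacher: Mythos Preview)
Your overall strategy---Lipschitz control, covering, union bound over the pointwise tail $\square(\lambda)$, then integration---is sound and matches the paper in spirit. But the paper does \emph{not} fix a single $\eps$-net and then optimize $\eps$ at the end. Instead, for each level $\lambda$ it chooses the net resolution to be $\lambda$ itself: with $N(\mathcal D,\lambda)\le c\lambda^{-\nu}$ points $x_i$ one has $|\Delta(x)-\Delta(\pi_\lambda(x))|\le \lambda/2$ by the Lipschitz lemma, and hence
\[
\P\Big(\sup_{x\in\mathcal D}|\Delta_{n,k/n}(x)|\ge\lambda\Big)\;\le\;1\wedge\Big(2c\,\lambda^{-\nu}\,\square(\lambda)\Big).
\]
It is this extra $\lambda^{-\nu}$ inside the integrand (rather than a fixed $N$ outside) that produces the specific exponents $\nu+5$ and $\nu-1$: for instance, for the Gaussian-type piece $\square_1(\lambda)=e^{-\alpha_{k,n}\lambda^2}$ one integrates $1\wedge c\lambda^{-\nu}e^{-\alpha_{k,n}\lambda^2}$ by splitting at a threshold $\lambda_{k,n}$ chosen to balance $\lambda_{k,n}$ against $c\,\lambda_{k,n}^{-\nu-1}\alpha_{k,n}^{-1}e^{-\alpha_{k,n}\lambda_{k,n}^2}$, which yields $\lambda_{k,n}\asymp\sqrt{\log^+(\alpha_{k,n}^{\nu+5})/\alpha_{k,n}}$. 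The terms $\square_3,\square_5,\square_6$ are treated the same way (using that $\omega_{\mathcal D}^{-1}(t)/t$ is nondecreasing to linearize the exponent past the threshold) and each gives the $\omega_{\mathcal D}(\sqrt k/n)/\sqrt k$ rate with the $[\sqrt k/\omega_{\mathcal D}(\sqrt k/n)]^{\nu-1}$ inside the $\log^+$.

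Your fixed-$\eps$ variant also works and gives the same rate up to the form of the logarithm, but your explanation of how the exponents $\nu+5$ and $\nu-1$ arise is off: with a fixed net each integrated term picks up a single factor $\log N\asymp \nu\log(1/\eps)$, not a power of $\log(1/\eps)$, so those particular exponents will not fall out of ``counting how many times $\log(1/\eps)$ appears.'' If you want to reproduce the statement verbatim, switch to the $\lambda$-adaptive net; if you keep the fixed net, you will obtain an equivalent bound with a slightly different argument inside the $\log^+$.
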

This bound is deduced from a deviation bound on $\sup_{x \in  \mathcal D } | \Delta_{n,\frac kn} (x)|$ which is  given in the proof. Up to a logarithm term, the rate is the same as for the pointwise convergence. As for the pointwise convergence, this result could be easily extended to the case of non compactly supported measures.

\section{The geometric information carried by  the quantile function $\mathbf{F_x^{-1}}$}
\label{sec:AppliRd-discu}

The upper bounds we obtain in the previous section directly depend on  the regularity of $F_x^{-1}$. We now give some insights about how the geometry of the support of the  measure in $\R^d$ impacts the quantile function $ F_x^{-1}$.

\subsection{Compact support and modulus of continuity of the quantile function}

\begin{figure}
\begin{center}
	\includegraphics[scale=0.5]{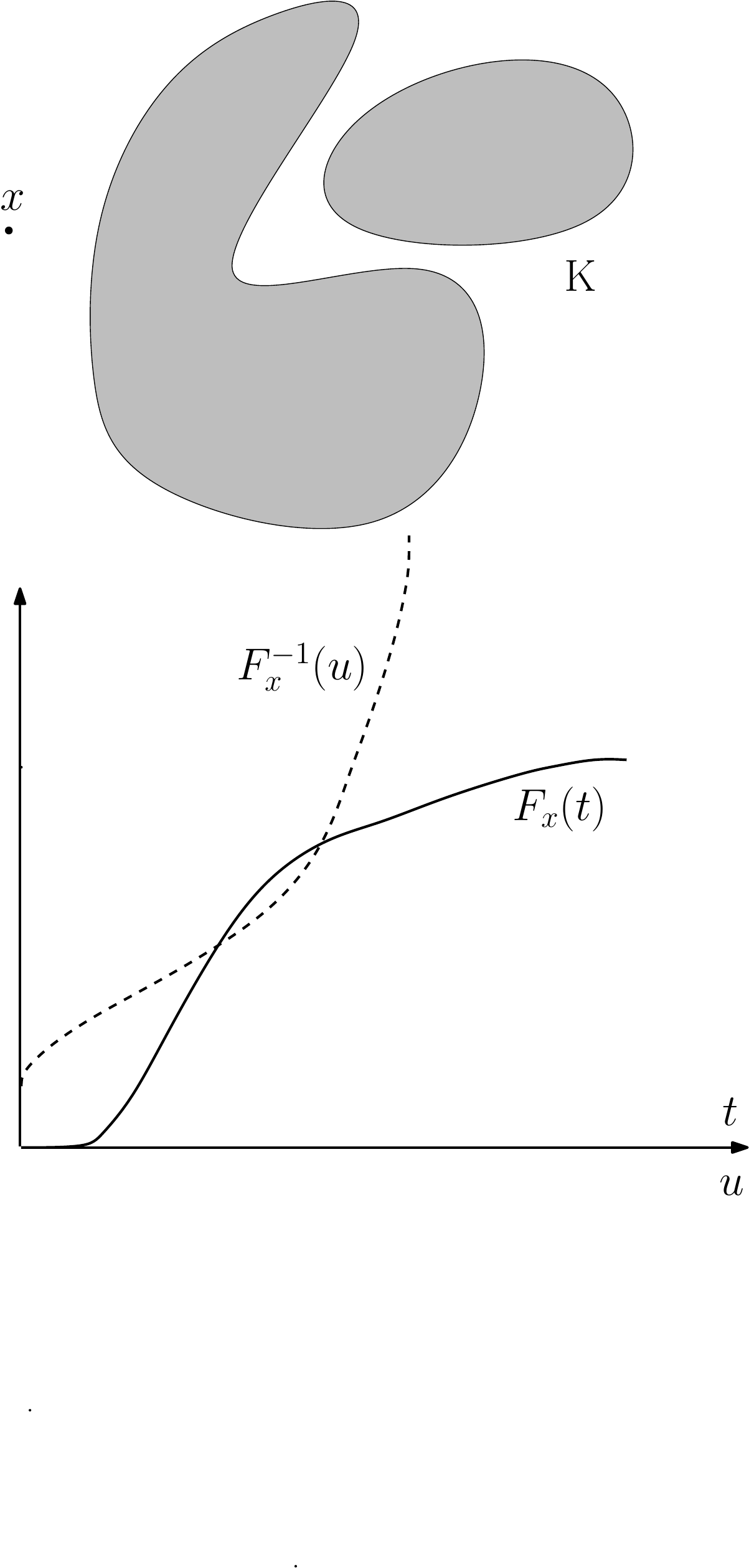}
	\hskip 1cm
	\includegraphics[scale=0.5]{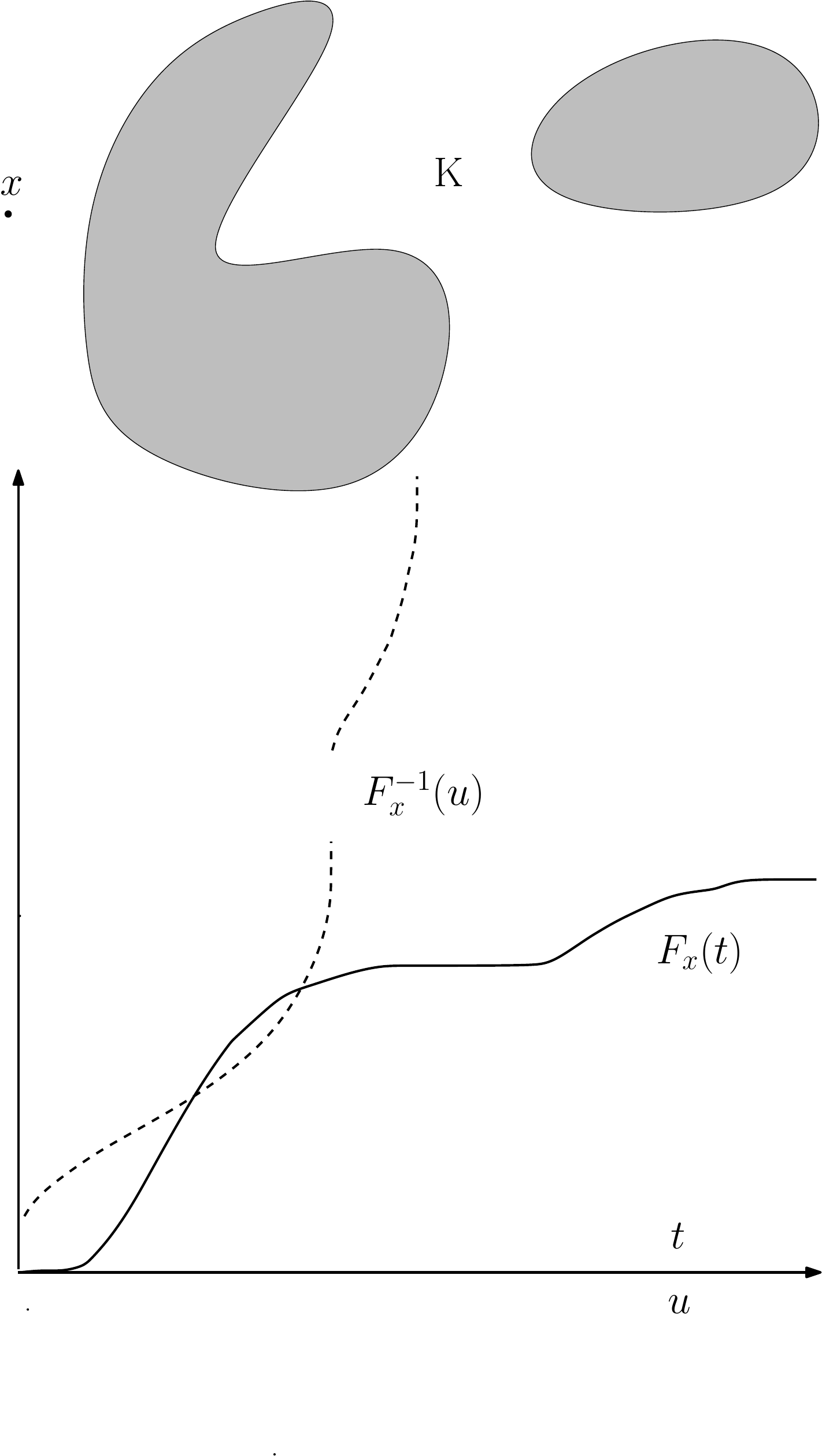}	
\caption{Left: one situation where the support of $P$ is not a connected set whereas the support of $dF_x$ is (for $r =1$). The
quantile function $F_x^{-1}$ is continuous. Right: one situation where the support of $dF_x$ is is not a connected set ;
the quantile function $F_x^{-1}$  is not continuous.}
\label{fig:Pnotconnex}
\end{center}
\end{figure}

A geometric characterization of the existence of $\tilde \omega_x$ on $[0,1]$ can be given in terms of the support of the
 measure $P$. The following Lemma is borrowed and adapted from Proposition A.12 in~\cite{bobkov2014one}:
 \begin{Lemma}
 Given a measure $P $  in $\R^d$ and  an observation point $x \in \R^d$, the following properties are
equivalent:
\begin{enumerate}
\item  the modulus of continuity  of the quantile function $F^{-1}_{x}$ satisfies  $\tilde \omega_x(u)  <  \infty$ for
any $u\leq 1$ ;
\item the push-forward distribution of $P$ by the  function $\| x- \cdot \| ^r$ is compactly supported ;
\item $P$  is compactly supported.
\end{enumerate}
\end{Lemma}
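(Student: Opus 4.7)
The plan is to prove the chain of implications $(3) \Rightarrow (2) \Rightarrow (1) \Rightarrow (3)$, exploiting the fact that, since $\|x-\cdot\|^r$ is a non-negative continuous function, the push-forward measure $dF_x$ lives on $\R^+$, so compact support amounts to being bounded above.

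First I would handle $(3) \Rightarrow (2)$. If $\supp(P) \subset \bar B(x,R)$ for some $R>0$, then the image of $\supp(P)$ under the continuous function $y \mapsto \|x-y\|^r$ is a compact subset of $[0,R^r]$, hence the push-forward of $P$ by this map is compactly supported in $\R^+$.

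Next, $(2) \Rightarrow (1)$ is essentially a rewriting. If $dF_x$ is supported in $[0,M]$ for some $M > 0$, then $F_x^{-1}(u) \in [0,M]$ for every $u \in [0,1]$ (using the extension $F_x^{-1}(1) = \sup\{t : F_x(t) < 1\}$). Consequently
\[
\tilde\omega_x(v) \;\leq\; F_x^{-1}(1) - F_x^{-1}(0) \;\leq\; M \;<\; \infty
\]
for every $v \in (0,1]$, which is (1).

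Finally, for $(1) \Rightarrow (3)$, I would argue by contraposition: if $P$ is not compactly supported, then $P(\bar B(x,R)^c) > 0$ for every $R>0$, so for every $M > 0$ one has $P(\|x-X_1\|^r > M) > 0$, i.e. $F_x(M) < 1$. This forces $F_x^{-1}(1) = \sup\{t : F_x(t) < 1\} = +\infty$, and therefore $\tilde\omega_x(1) = F_x^{-1}(1) - F_x^{-1}(0) = +\infty$, contradicting (1). The implication $(2) \Rightarrow (3)$ follows by the same argument (or directly: if $dF_x \subset [0,M]$ then $P(\bar B(x,M^{1/r})^c) = 0$, so $\supp(P) \subset \bar B(x, M^{1/r})$).

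There is no real obstacle here; the only subtle point is a careful handling of the boundary values $F_x^{-1}(0)$ and $F_x^{-1}(1)$ under the extension adopted in the notation section, together with the standard fact that the support is the smallest closed set of full measure (so that a null set outside a closed ball forces the support to lie in that ball). Once these conventions are made explicit, each implication reduces to a one-line verification.
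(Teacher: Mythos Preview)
Your proof is correct and self-contained. Note, however, that the paper does not actually supply a proof of this lemma: it simply states that the result is ``borrowed and adapted from Proposition~A.12 in \cite{bobkov2014one}''. So there is no in-paper argument to compare against; your chain $(3)\Rightarrow(2)\Rightarrow(1)\Rightarrow(3)$ is a perfectly valid way to make the cited statement self-contained, and your handling of the boundary values $F_x^{-1}(0)$, $F_x^{-1}(1)$ and of the characterization of the support as the smallest closed full-measure set is exactly what is needed.
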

In particular, if $P$ is compactly supported,  we can always take as an upper bound on $\tilde \omega_x $  the constant function  $\omega_x = \dhaus \left( \{x\}, K\right)$. Of course this is not a very relevant choice  to describe the  rate of convergence of the DTEM.

\subsection{Connexity of the support and modulus of continuity of the quantile function}

While discontinuity of the distribution function corresponds to atoms, discontinuity points of the quantile function corresponds to area with empty mass in $\R^d$ (see the right picture of Figure \ref{fig:Pnotconnex}). The fact that $\tilde{\omega}_x(0^+) = 0  $  is directly related to the connectedness of the support of the distribution $d F_{x}$. Indeed,  it  is equivalent to assuming that the support of $d F_{x}$ is a closed interval in $\R^+$, see for instance Proposition~A.7 in~\cite{bobkov2014one}. 

 In the most favorable situations where  the support of $P$  is a connected set, then $\tilde{\omega}_x(0^+) = 0  $ and the  faster $\tilde \omega_x$ tends to 0 at 0, the better the rate we obtain. However, for some point $x \in \R^d$, it is also possible for the support of $d F_{x}$ to be an interval even when the support of $P$ is not a connected set of $\R^d$ (see the left picture of  Figure~\ref{fig:Pnotconnex}).  In the other case, when the support of  $d F_{x}$ is not a connected set, the term $\tilde
\omega_x (0) $ roughly corresponds to the maximum distance between two consecutive intervals of the support of  $d
F_{x}$ (see the right picture of Figure \ref{fig:Pnotconnex}).  Our results  can   still be applied in these situations
but the upper bounds we obtain in this case are larger because  $\omega_x (\frac kn)$ can not be smaller  than $\tilde \omega_x (0) $.

\subsection{Uniform modulus of continuity of $F_{x,r}^{-1}$ versus local continuity of $F_{x,r}^{-1}$ at the origin}

Though  stronger than continuity, a natural regularity assumption on $F_{x,r}^{-1}$ is assuming that this function is also concave:
\begin{Lemma}
If $F_x^{-1}$ is concave  then  we can take $\omega_x = F_x^{-1} - F_x^{-1}(0)$. In particular, if $x$ is in the support
of $P$ then we can take $\omega_x = F_x^{-1}$.
\end{Lemma}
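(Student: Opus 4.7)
The plan is a short, direct argument exploiting the standard fact that for a concave function, the increment over an interval of fixed length is maximized when the interval sits at the leftmost position of the domain.

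First, I would recall that quantile functions are non-decreasing by construction, so $F_x^{-1}$ here is a concave non-decreasing map from $[0,1]$ into $\mathbb{R}^+$. The key auxiliary fact is: if $f:[0,1]\to\mathbb{R}$ is concave, then for any fixed $v\in(0,1]$ the map $a\mapsto f(a+v)-f(a)$ is non-increasing on $[0,1-v]$. This is a one-line consequence of concavity: for $a<a'$ in $[0,1-v]$, write $a+v$ as a convex combination of $a$ and $a'+v$ and apply concavity, then symmetrically express $a'$ as a convex combination of $a+v$ and $a'+v$; adding the two inequalities yields $f(a'+v)-f(a') \le f(a+v)-f(a)$.

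Given this monotonicity, for any $u,u'\in[0,1]$ with $|u-u'|\le v$, setting $a=u\wedge u'$ and using monotonicity of $F_x^{-1}$, we get
\[
|F_x^{-1}(u)-F_x^{-1}(u')| = F_x^{-1}(a+|u-u'|)-F_x^{-1}(a) \le F_x^{-1}(v) - F_x^{-1}(0),
\]
where the inequality uses the auxiliary fact applied to the concave function $F_x^{-1}$, together with the fact that $F_x^{-1}(a+|u-u'|)-F_x^{-1}(a)\le F_x^{-1}(a+v)-F_x^{-1}(a)$ by monotonicity of $F_x^{-1}$. Taking the supremum over admissible $u,u'$ gives $\tilde\omega_x(v)\le F_x^{-1}(v)-F_x^{-1}(0)$, so $\omega_x := F_x^{-1}-F_x^{-1}(0)$ is indeed a valid upper bound on the modulus of continuity; it is moreover non-decreasing and, by concavity on $[0,1]$, continuous on $(0,1]$ and right-continuous at $0$, as required by the standing conventions on $\omega_x$.

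For the second assertion, I would observe that $F_x$ is the distribution function of $\|x-X_1\|^r$, so $F_x^{-1}(0)=\inf\{t:F_x(t)>0\} = \inf\{t:P(\bar B(x,t^{1/r}))>0\}$. If $x$ belongs to the support of $P$, then $P(\bar B(x,\varepsilon))>0$ for every $\varepsilon>0$, forcing $F_x^{-1}(0)=0$ and hence $\omega_x=F_x^{-1}$.

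No step here is really an obstacle; the only subtle point is the concavity lemma for increments, which I would either prove in one line as above or cite as a standard property of concave functions. The rest is a direct bookkeeping check that the hypotheses on $\omega_x$ imposed earlier (non-decreasing, continuous) are satisfied.
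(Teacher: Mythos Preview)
Your argument is correct, and in fact the paper states this lemma without proof, treating it as an elementary consequence of concavity; there is therefore no ``paper's proof'' to compare against. Your write-up supplies exactly the standard justification one would expect.

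One small technical slip in your sketch of the auxiliary fact: you propose to ``express $a'$ as a convex combination of $a+v$ and $a'+v$'', but this fails when $a'<a+v$ (i.e.\ when the two intervals $[a,a+v]$ and $[a',a'+v]$ overlap). The clean one-line version is to express \emph{both} interior points $a'$ and $a+v$ as convex combinations of the common endpoints $a$ and $a'+v$; the two convex parameters then sum to $1$, and adding the two concavity inequalities gives $f(a')+f(a+v)\ge f(a)+f(a'+v)$, which is the desired monotonicity of increments. Since you already note that this property can simply be cited as standard for concave functions, this is not a genuine gap in your proposal.
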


If we take $r = 1$, in many simple situations we note that the cumulative distribution function $F_{x,1}$ roughly
behaves as a power function $t^\ell$, where $\ell$ is the dimension of the support. In this context, the quantile
function $F_{x,1}^{-1}$ roughly behaves as a power function in $u^{1/\ell}$.  We then have that $F_{x,r} ^{-1}(u) = 
\left[  F_{x,1} ^{-1}(u) \right]^r $ behaves as $u^{\frac r \ell}$. This is for instance the case for $(a,b)$ standard
measures, as shown in the next section. These considerations suggest that if $r/ \ell  < 1$, in many situations the
quantile function is concave and then $\omega_x$ is of the order of $ F_x^{-1} - F_x^{-1}(0)$.   This means that the
upper bound on $\E | \Delta_{P,n,\frac kn}| $ is of the order of  $\frac 1{\sqrt n}  \widetilde \Psi_x(\frac kn)$. 

More generally, as noticed in the comments following Theorem~\ref{Theo:ExpoDTMBounded}, the term  $F_x^{-1}(\frac kn) -
F_x^{-1}(0)$ is the dominating term in the upper bound \eqref{eq:InitilalBound}. We may check with the numerical
experiments of Section~\ref{subs:expe} that the function $ \widetilde \Psi_x$  yet captures the correct monotonicity of $\E | \Delta_{P,n,\frac kn}| $ as a function of $\frac kn$.

\subsection{The case of $\mathbf{(a,b)}$ standard measures} \label{sub:abstandard}

The intrinsic dimensionality of a given measure in $\R^d$ can be quantified by the so-called 
$(a,b)$- {\it standard} assumption which assumes that there exists $a'>0$, $\rho_0>0$ and $b >0$  such that 
\begin{equation*} \label{ref:SdtAssump1}
\forall x \in K, \ \forall r \in (0,\rho_0), \  P(B(x,\rho)) \geq a' \rho^b,
\end{equation*}
where $K$ is the support of $P$.  This assumption is popular in the literature about set estimation  \citep[see for instance][]{Cuevas09,cuevas2004boundary}. More recently, it has also been in used in \cite{chazalconvergenceJMLR,fasy2014confidence,chazal2014subsampling} for statistical analysis inTopological Data Analysis. 

Since  $K$ is compact, by reducing the constant $a'$ to a smaller constant $a$ if necessary, we easily check that this assumption (\ref{ref:SdtAssump1}) is equivalent to 
\begin{equation*} \label{ref:SdtAssump2}
\forall x \in K, \  P(B(x,\rho)) \geq 1\wedge a \rho^b .
\end{equation*}

We now give control on the two key terms $\omega_x$ and $F_x ^{-1}(u) -  F_x ^{-1}(0)  $  which are involved in the bounds on expectations of Section~\ref{sec:mainresults}.
\begin{Lemma}
Let $P$ be a probability measure on $\R^d$ which is $(a,b)$ standard on its support $K$. Then, for any $u
\in [0,1]$,
$$  F_x ^{-1}(u) -  F_x ^{-1}(0)  \leq  r  \left(  \frac{u}{a} \right) ^{1/b}  \left[  \left(  \frac{u}{a}
\right) ^{1/b} + \| K - x \|   \right] ^{r-1} ,$$
where $r$ is the power parameter in the definition~\eqref{eq:DTMdef} of the DTM.
Assume moreover that $K$ is a connected set of $\R^d$. Then, for any $h \in (0,1)$ we have
$$ 
\tilde \omega_x(h)   \leq r   \left(  \frac{h}{a} \right) ^{1/b}  \dhaus \left( \{x\},K \right) ^{r-1} .$$
\end{Lemma}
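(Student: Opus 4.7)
The plan is to reduce the problem to the ``radius quantile'' $R(u) := \inf\{\rho \geq 0 : P(\bar B(x,\rho)) \geq u\}$, since by composing with the strictly increasing map $t \mapsto t^r$ one has $F_x^{-1}(u) = R(u)^r$. In particular $F_x^{-1}(0) = \|K-x\|^r$, because $F_x(t) > 0$ iff $\bar B(x, t^{1/r})$ meets the support $K$. Since $t \mapsto t^r$ has derivative $r\,t^{r-1}$ and $r \geq 1$, the mean value theorem yields, for every $u \leq u'$,
\[
F_x^{-1}(u') - F_x^{-1}(u) \;=\; R(u')^r - R(u)^r \;\leq\; r\,\bigl(R(u')-R(u)\bigr)\,R(u')^{r-1}.
\]
Both inequalities of the lemma then reduce to bounding $R(u')-R(u)$ via the $(a,b)$-standard property and estimating $R(u')^{r-1}$, either by $(u/a)^{1/b}+\|K-x\|$ (first inequality) or by $\dhaus(\{x\},K)$ (second inequality).

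For the first bound I would pick $y^{*}\in K$ realizing $\|y^{*}-x\|=\|K-x\|$ (the infimum is attained since $K$ is closed; otherwise one argues by approximation). Applying the $(a,b)$-standard assumption at $y^{*}$ gives $P(B(y^{*},\rho)) \geq 1 \wedge a\rho^{b}$, and the inclusion $B(y^{*},\rho)\subseteq \bar B(x,\|K-x\|+\rho)$ yields $P(\bar B(x,\|K-x\|+\rho)) \geq 1 \wedge a\rho^{b}$. Choosing $\rho = (u/a)^{1/b}$ gives $R(u) \leq \|K-x\|+(u/a)^{1/b}$; combined with $R(0)=\|K-x\|$ and the mean value inequality above, this produces exactly the first stated bound.

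For the connected case, fix $u<u'$ in $[0,1]$ with $u'-u=h$, and set $s = R(u)$ and $\rho = (h/a)^{1/b}$. Since $K$ is connected, the continuous map $y \mapsto \|y-x\|$ has connected image, so its image contains every value in $[\|K-x\|,\sup_{y\in K}\|y-x\|]=[\|K-x\|,\dhaus(\{x\},K)]$. If $s+\rho \leq \dhaus(\{x\},K)$ one picks $y \in K$ with $\|y-x\| = s+\rho$, and the reverse triangle inequality yields $B(y,\rho) \subseteq \bar B(x,\,s+2\rho)\setminus \bar B(x,\,s)$; the $(a,b)$-standard bound then gives $P(\bar B(x,s+2\rho)) \geq u + a\rho^{b} = u+h$, so $R(u+h) \leq s + 2\rho$. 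The remaining case $s+\rho > \dhaus(\{x\},K)$ is handled by the trivial bound $R(u+h) \leq \dhaus(\{x\},K)$. In either situation $R(u+h)-R(u) \lesssim (h/a)^{1/b}$ and $R(u+h) \leq \dhaus(\{x\},K)$, and injecting this into the mean-value inequality yields a bound of the advertised form.

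The main obstacle is obtaining the \emph{sharp} constant in the second step. The $(a,b)$-standard hypothesis only controls the mass of \emph{full} balls centered in $K$, whereas the shell argument above naturally places such a ball in a shell of thickness $2\rho$, producing an extra factor $2$ that is absent from the statement. Removing it would require either a sharper half-ball estimate at a point $y\in K$ with $\|y-x\|=s$, or covering the shell by several balls centered along a curve in $K$ (exploiting connectedness more strongly). This is the delicate step where my plan most likely diverges from the paper's proof.
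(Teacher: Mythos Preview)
Your approach coincides with the paper's on both parts. For the first inequality the paper proceeds exactly as you do: project $x$ to a nearest point $\pi_K(x)\in K$, use the $(a,b)$-standard bound at $\pi_K(x)$ to get $F_x^{-1}(u)\le\bigl[(u/a)^{1/b}+\|K-x\|\bigr]^{r}$, and conclude via the derivative of $v\mapsto (v+\|K-x\|)^{r}$.

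For the connected case the paper runs the same shell argument, only ``backwards'': it sets $\alpha:=R(u+h)-R(u)$, uses connectedness of $K$ (via the intermediate value property of $y\mapsto\|y-x\|$) to find $x_3\in K$ at radius $R(u)+\alpha/2$, notes that $B(x_3,\alpha/2)$ lies in the shell of mass at most $h$, and concludes $a(\alpha/2)^{b}\le h$, i.e.\ $\alpha\le 2(h/a)^{1/b}$. This is precisely your argument with the roles of the chosen and derived quantities swapped, and it produces the \emph{same} extra factor $2$ you were worried about. The paper's displayed computation in fact carries this $2$ in the penultimate line and then silently drops it in the last line (where, incidentally, $(F_x^{-1}(t+h))^{r-1}$ should read $R(t+h)^{r-1}$). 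So your plan does not diverge from the paper's; the constant discrepancy you identified is a slip in the paper rather than a gap in your reasoning, and your intuition that eliminating the $2$ would require a genuinely sharper half-ball estimate is correct.
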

\begin{figure}
\begin{center}
	\includegraphics[scale=0.65]{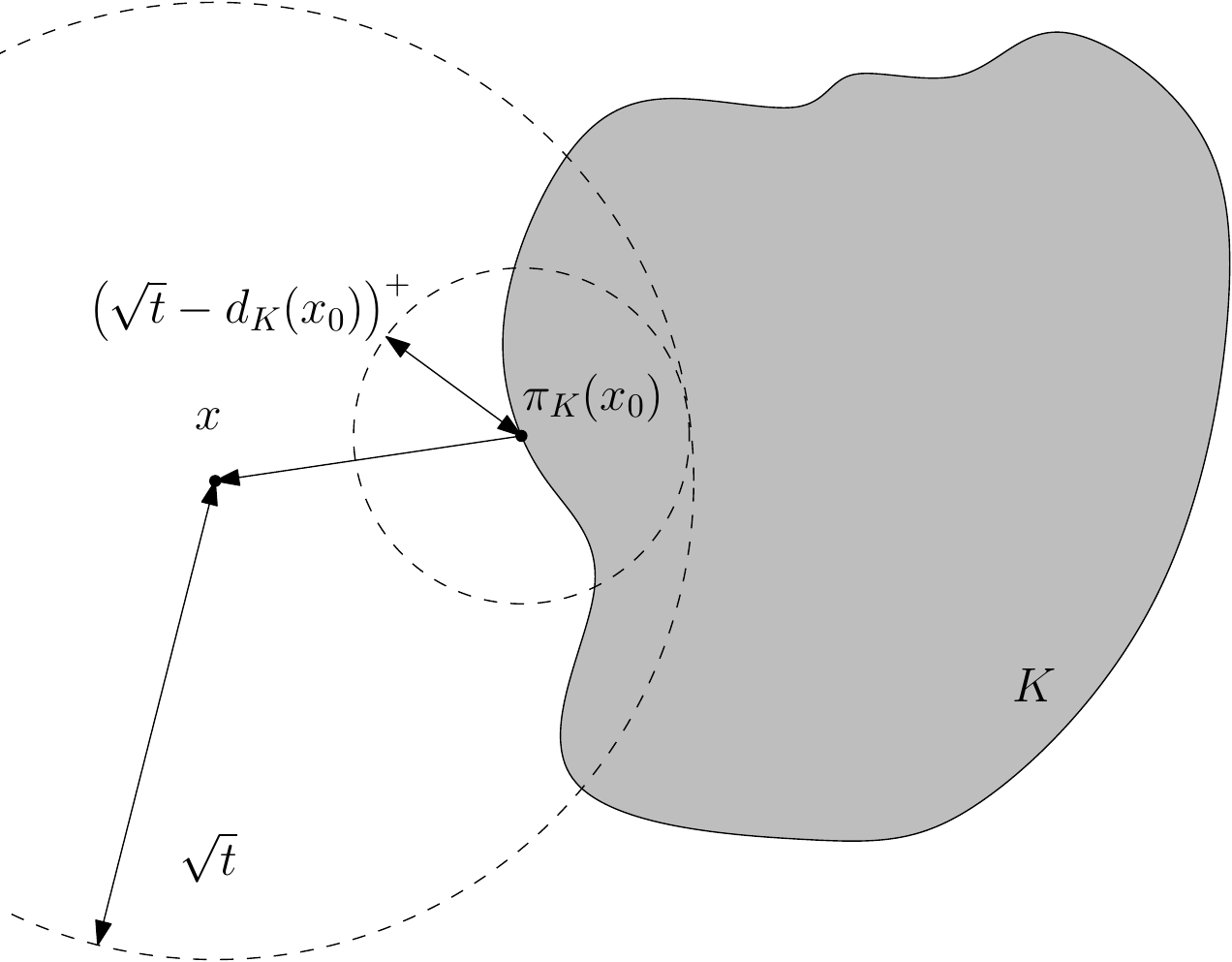}
	\includegraphics[scale=0.65]{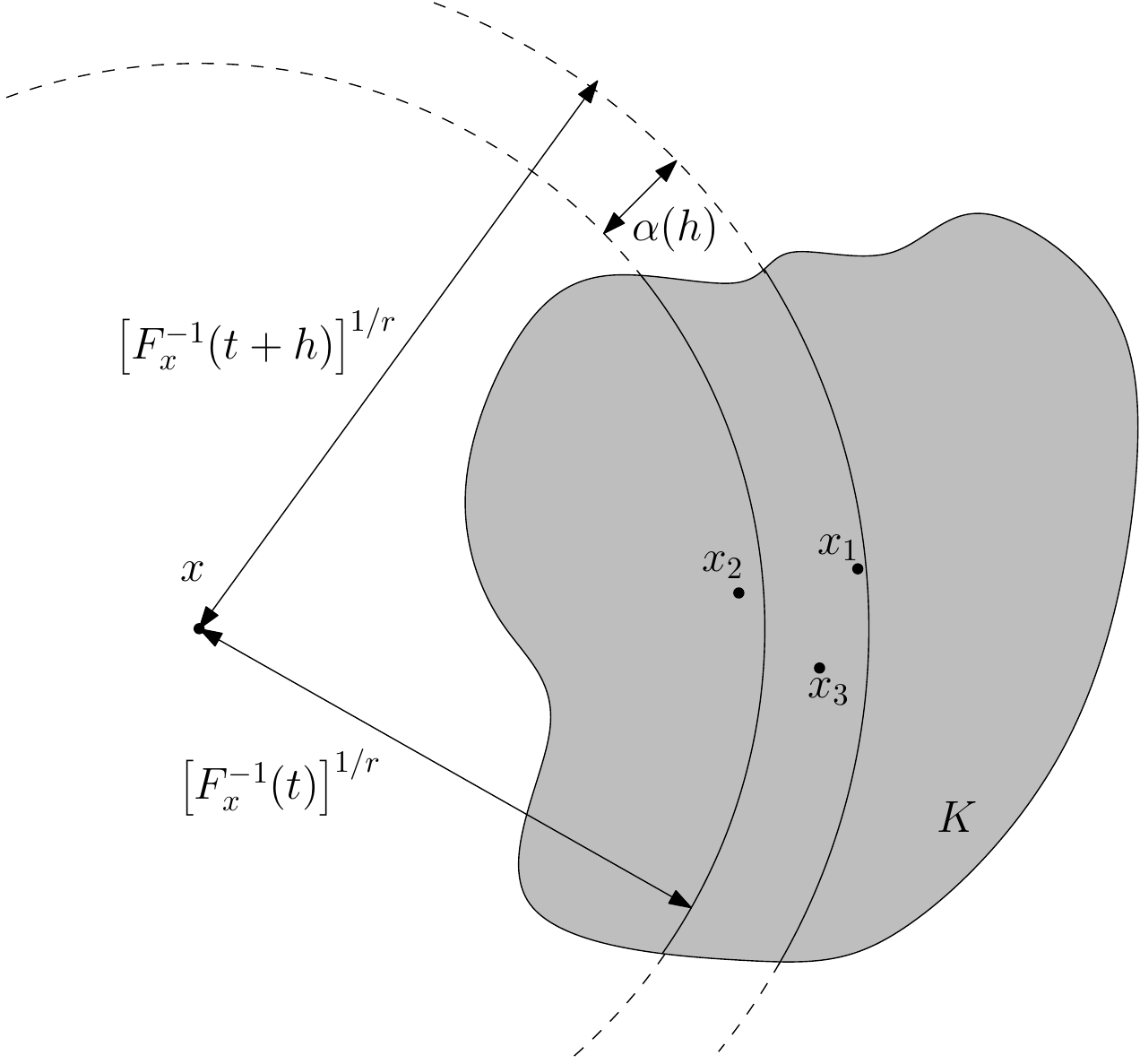}
\caption{About the modulus of continuity of the quantile function $F_x^{-1}$ in the case of $(a,b)$- standard measures in $\R^d$.}
\label{fig:Deltah}
\end{center}
\end{figure}
\begin{proof}
We have (see the left picture of Figure \ref{fig:Deltah})
\begin{align*}
F_x (t)  &= P\left( B\left(x,  t^{1/r} \right) \right) \\
& \geq  P\left(   B \left(\pi_K(x) , \left(t^{1/r} - \| K - x \| \right) ^+  \right) \right) 
 \end{align*}
where $\pi _K(x)$ is a point of $\R^d$ which satisfies  $\| K - x \| = \| \pi _K(x) - x\|$. Then 
$F_x (t)  \geq  a \left[  \left(t^{1/r} - \| K - x \| \right) ^+   \right]^b $ and  we find that
 $ F_x ^{-1}(u)    \leq   \left[  \left(  \frac{u}{a} \right) ^{1/b} + \| K - x \|   \right] ^{r} $. Next, we have 
$ F_x ^{-1}(0) =   \| K - x \|^r$ and the first point derives by upper bounding the derivatives of $v \mapsto   \left[   v
+ \| K - x \|  \right] ^{r} $.

We now assume that $K$ is a connected set. Let  $(u,h) \in (0,1)^2$ such that $u + h \leq 1$ and $F_x ^{-1}(u)  > F_x ^{-1}(0) $.   We can also assume that $F_x^{-1}(u+h) > F_x^{-1}(u)$. Let 
 $\alpha(h) = [F_x^{-1}(u+h)]^{1/r} -  [ F_x^{-1}(u)]^{1/r}$ (see the right picture of Figure \ref{fig:Deltah}). By
definition of a quantile, there exists a
point $x_1 \in  K  \cap \left\{B \left( x, [F_x^{-1}(u+h)]^{1/r} \right) \setminus  B \left( x, [F_x^{-1}(u)]^{1/r}
\right)
\right\}   $. If $F_x^{-1}(u) >0$  then for the same reason there exists a point $x_2 \in K \cap  B \left( x, [F_x^{-1}(u)]^{1/r} \right)$. If $F_x^{-1}(u) =0$ then  $x  \in K$ and we take $x_2 = x$. Next, since  $K$ is a connected set, there exists a point $x_3 \in K \cap B \left( x, [F_x^{-1}(t)]^{1/r} +
\frac{\alpha}2 \right)$. The measure $P$ being $(a,b)$-standard, we find that
\begin{align*}
  h &\geq  P \left(  B \left(  x_3, \frac{\alpha(h)}2   \right) \right) \\
  &\geq  a \left(  \frac{\alpha(h)} {2}\right)^{b} .
 \end{align*}
Then,
$$  [F_x^{-1}(t+h)]^{1/r} - [F_x^{-1}(t)]^{1/r} \leq \frac 2 {a ^{1/b}} h ^{1/b},$$
and thus
$$ 
F_x^{-1}(t+h) -  F_x^{-1}(t)  \leq  r  a ^{-1/b}   \left( F_x^{-1}(t+h) \right) ^{r-1} h ^{1/b},
$$
which proves the Lemma.
\end{proof}

\section{Numerical experiments} \label{subs:expe}

\begin{figure}
   \begin{minipage}[b]{0.5\linewidth}
	\includegraphics[scale=0.5]{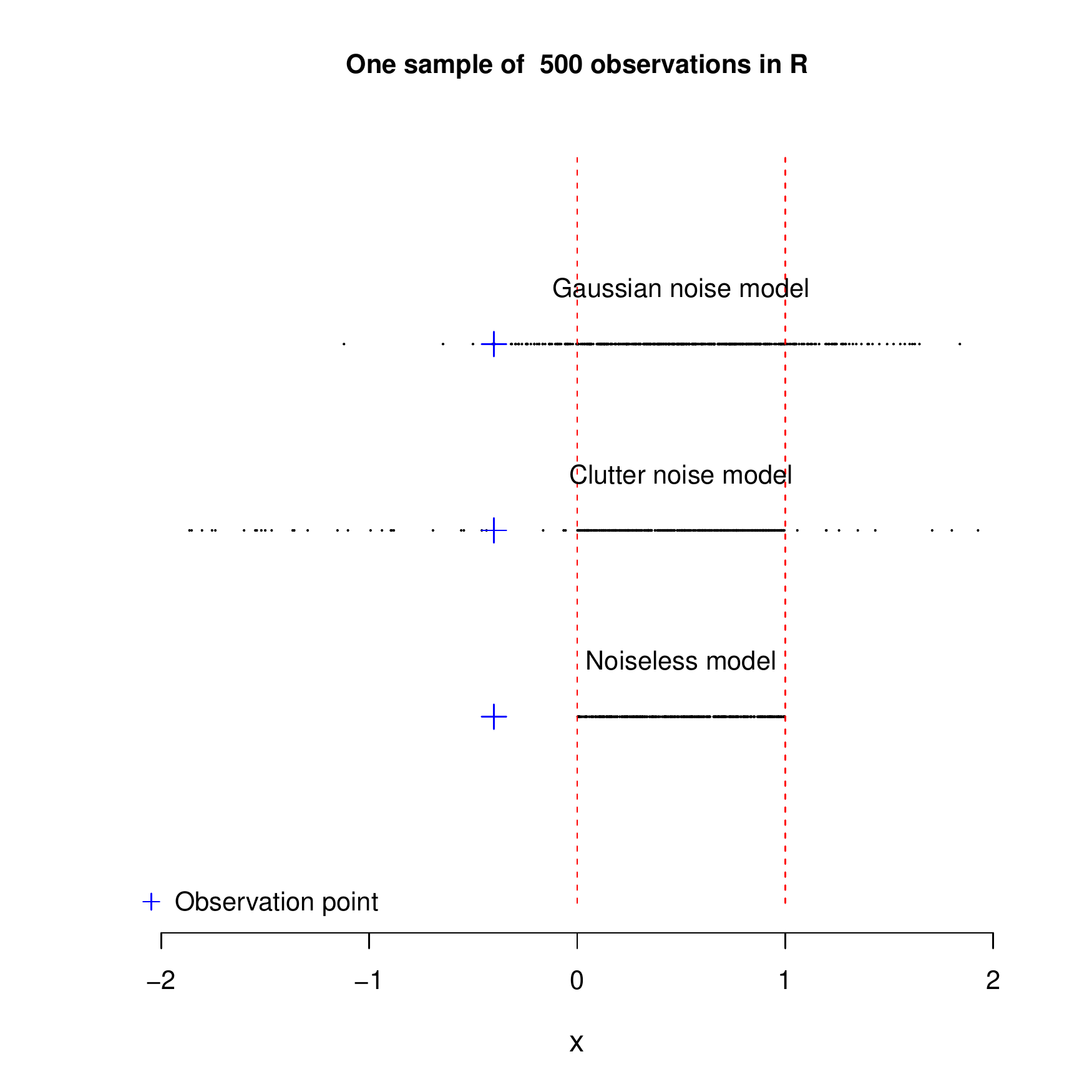}
   \end{minipage}\hfill
   \begin{minipage}[b]{0.5\linewidth}   
	\includegraphics[scale=0.5]{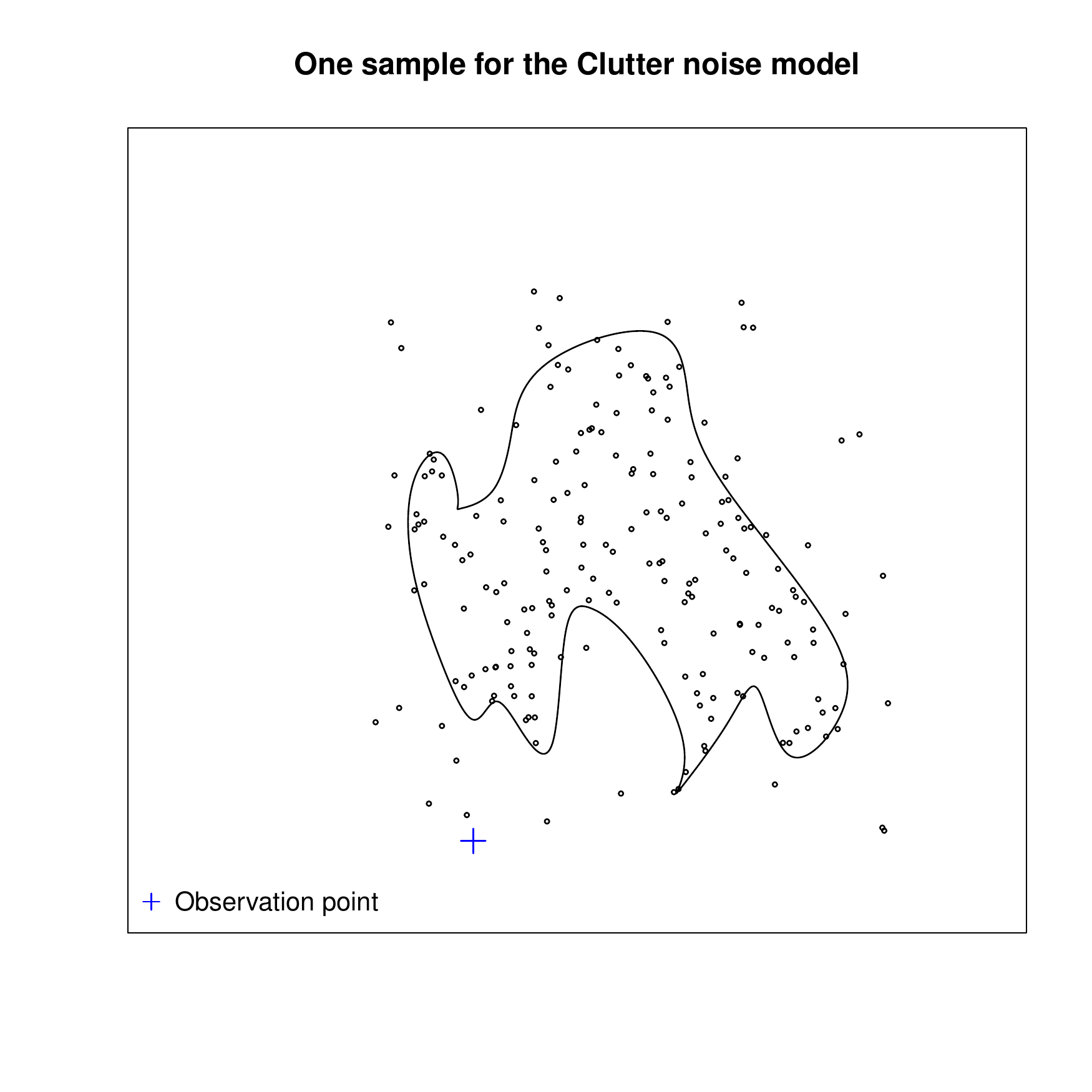}
   \end{minipage}
\caption{Left: samples drawn for each generative model for the Segment Experiment. Right: one sample drawn from the clutter noise
model for  the 2-d Shape Experiment. The observation point is represented by a blue cross.}
\label{fig:SimusSegmentAnd2dShape}
\end{figure}
\begin{figure}
   \begin{minipage}[b]{0.5\linewidth}  
   	\includegraphics[scale=0.3]{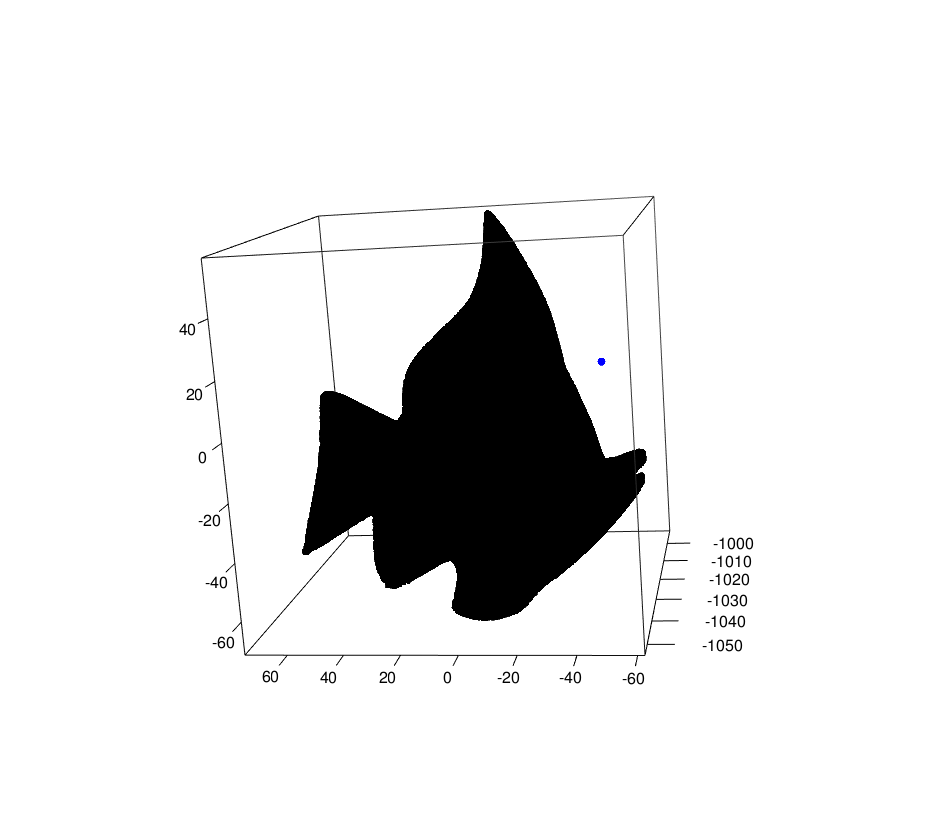}
   \end{minipage}\hfill 
   \begin{minipage}[b]{0.5\linewidth}
	\includegraphics[scale=0.3]{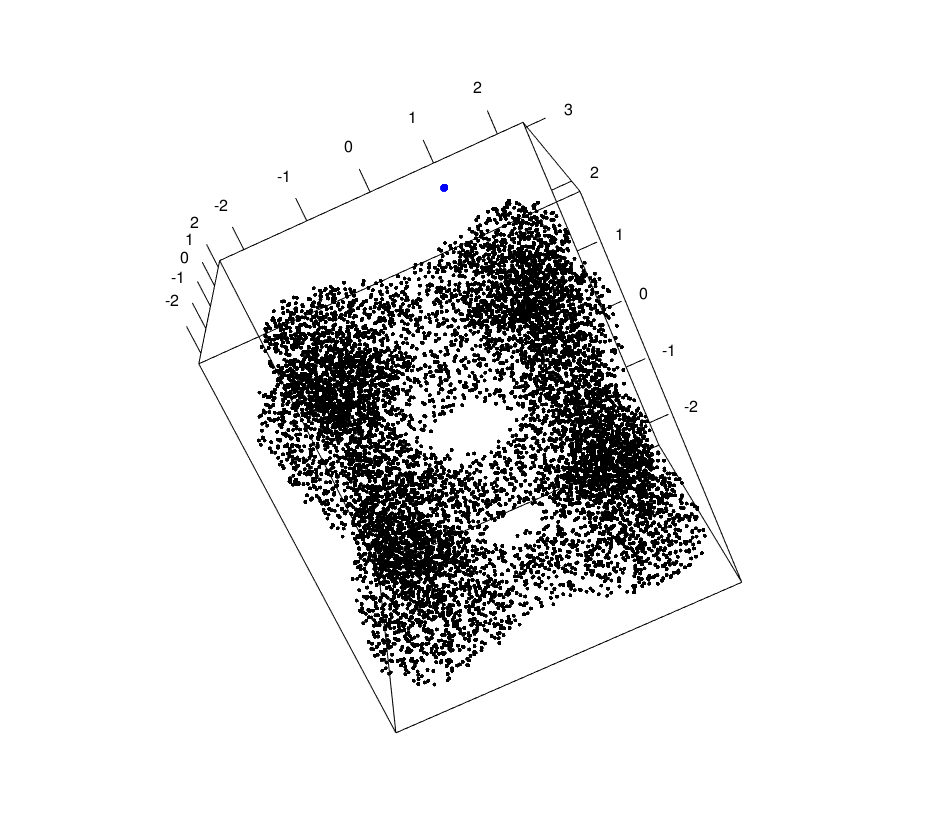}
   \end{minipage}
\caption{Left: 3-d plot of the shape for the Fish Experiment. Right: a 3-d plot of a sample drawn for the uniform measure on the Tangle Cube. The observation point is represented by the blue point outside of the shape.}
\label{fig:SimusFishTangle}
\end{figure}

In this section, we illustrate with numerical experiments that the expectation bounds given on  $\Delta_{n,\frac kn}$ in Section~\ref{sec:mainresults} are sharp. In particular, we check that the function $\widetilde\Psi_x$  has the same monotonicity as the function  $ m \mapsto  \E | \Delta_{n,\frac kn}(x) | $. 

We consider four different geometric shapes  in $\R$, $\R^2$ and $\R^3$, for which a visualization is possible: see
Figures~\ref{fig:SimusSegmentAnd2dShape} and \ref{fig:SimusFishTangle}.
\begin{itemize}
\item {\bf Segment Experiment in $\R$.} The shape $K$ is the segment $[0,1]$  in $\R$.
\item{\bf 2-d shape Experiment in $\R^2$.}  A  closed curve has been drawn at hand in  $\R^2$. It has been next approximated 
by a polygonal curve with a high precision. The shape $K$ is the  compact set  delimited by the polygon curve.
\item {\bf Fish Experiment: a 2-d surface in $\R^3$.} The shape  $K$ is the discrete set defined by a point cloud of 216979 points approximating a 2-d surface representing a fish. This dataset is provided courtesy of CNR-IMATI  by the AIM@SHAPE-VISIONAIR Shape Repository. 
\item{\bf Tangle Cube Experiment in $\R^3$.}   The shape $K$ is the tangle cube, that is the 3-d manifold defined as the
set of points $(x_1,x_2,x_3)  \in \R^3$ such that  $  x_1^4 -  5 x_2^2 + x_2^4 - 5  x_2^2  +x_3^4 - 5 x_3^2 + 10 \leq 0$. 
\end{itemize}

For each shape, we consider three generative models. These models are standard in support estimation and geometric
inference, see \cite{genovese2012manifold} for instance. 
\begin{itemize}
 \item {\bf Noiseless model:} $X_1, \dots X_n$ are sampled   from the uniform probability distribution $P_{uni}$  on
$K$.
 \item {\bf Clutter noise model:} $X_1, \dots X_n$ are sampled  from the mixture distribution $P_{cl} =  \pi U + (1-\pi)
P$ where $U$ is the uniform measure on a box $B$ which contains $K$ and where $\pi $ is a proportion parameter.
 \item {\bf Gaussian convolution model:} $X_1, \dots X_n$ are sampled   from the distribution $P_g = P  \star
\Phi(0,\sigma I_d) $ where  $\Phi(0,\sigma) $ is the centered isotropic multivariate Gaussian distribution on $\R^d$
with covariance matrix $\sigma I_d$. We take $\sigma = 0.5$ in all the experiments.
\end{itemize}
We use the same notation $P_{\mbox{\scriptsize $\square$}}$ for any  of the probability distributions  $P_{uni}$,  $P_{cl}$ or $P_{g}$.  An observation point $x$ is fixed for each experiment. For each experiment and each generative model, from a very large sample drawn from $P_{\square}$ we  compute very accurate estimations of the quantile functions $F_{x,r}^{-1}$ and of the DTM $d_{P_{\square},m,t}(x)$. Next, we simulate $n$-samples  from $P_{\square}$  and we compute  the DTEM for each sample. We take  $n=500$ for the two first experiments  and $n=2000$ for the two others.  The trials are all repeated 100 times and finally  we compute some approximations of  the error $\E \Delta_{n,\frac kn,r}(x)$ with a standard Monte-Carlo procedure, for all the measures $P_{\square}$. The  DTMs and the DTEMs are computed for the powers $r=1$,  $r=2$ , and also for
$r=3$  for the Tangle Cube Experiment. We also compute the function $m \mapsto
\tilde \Psi (m)$. The simulations have been performed using  R software \citep{Rcite} and we have 
used the packages \textsc{FNN}, \textsc{rgl}, \textsc{grImport} and \textsc{sp}. 

\subsection*{Results}
The figures~\ref{fig:ErrorSegment} to~\ref{fig:ErrorTangle} give the results of the four experiments with the three generative models. The top graphics of Figures~\ref{fig:ErrorSegment} to~\ref{fig:ErrorTangle} represent the quantiles functions $F_{x,r}^{-1}$ in each case. For the noiseless models, the behavior of  $F_{x,r}^{-1}$  at the origin is directly related to
the power $r$ and to the intrinsic dimension of the shape. For $r  =1$ , the quantile is linear for the the segment, it is roughly in $\sqrt m$ for the 2-d shape and for the Fish Experiment. It is of order of $m ^{1/3}$ for the Tangle Cube. We observe that $F_{x,r}^{-1}$ is roughly linear with $r=2$ for  the 2-d shape and the Fish shape,  and with $r=3$ for the Tangle Cube. 

The quantile functions of the noise models in the four cases start from
zero since the observation is always taken inside the supports of $P_{cl}$ and $P_{g}$. A regularity break for the quantile function of the clutter noise model  can be observed in the neighborhood of $ m = P( B(x , \|K-x\|^r))  $. The quantile functions for the Gaussian noise is always smoother. 

The main point of these experiments is that, in all cases,  the function $ m \mapsto \tilde \Psi(m) $ shows the same
monotonicity as the expected error studied in the paper $ : m \mapsto | \E \Delta_{n, m,r}(x)| $. These results confirm that the
function $ \tilde \Psi$ provides a correct description of $\E \Delta_{n,m,r}$. 

We also observe that the function $: m \mapsto \E | \Delta_{n, m,r}(x)| $ does not have one typical shape : it can be an increasing curve, a decreasing curve or even an U-shape curve. Indeed, the monotonicity  depend on many factors including the intrinsic dimension of the shape, its geometry, the presence of noise and the power coefficient $r$. 

\begin{figure}[pH]
   \begin{minipage}[b]{0.5\linewidth}
      \centering \includegraphics[scale=0.42]{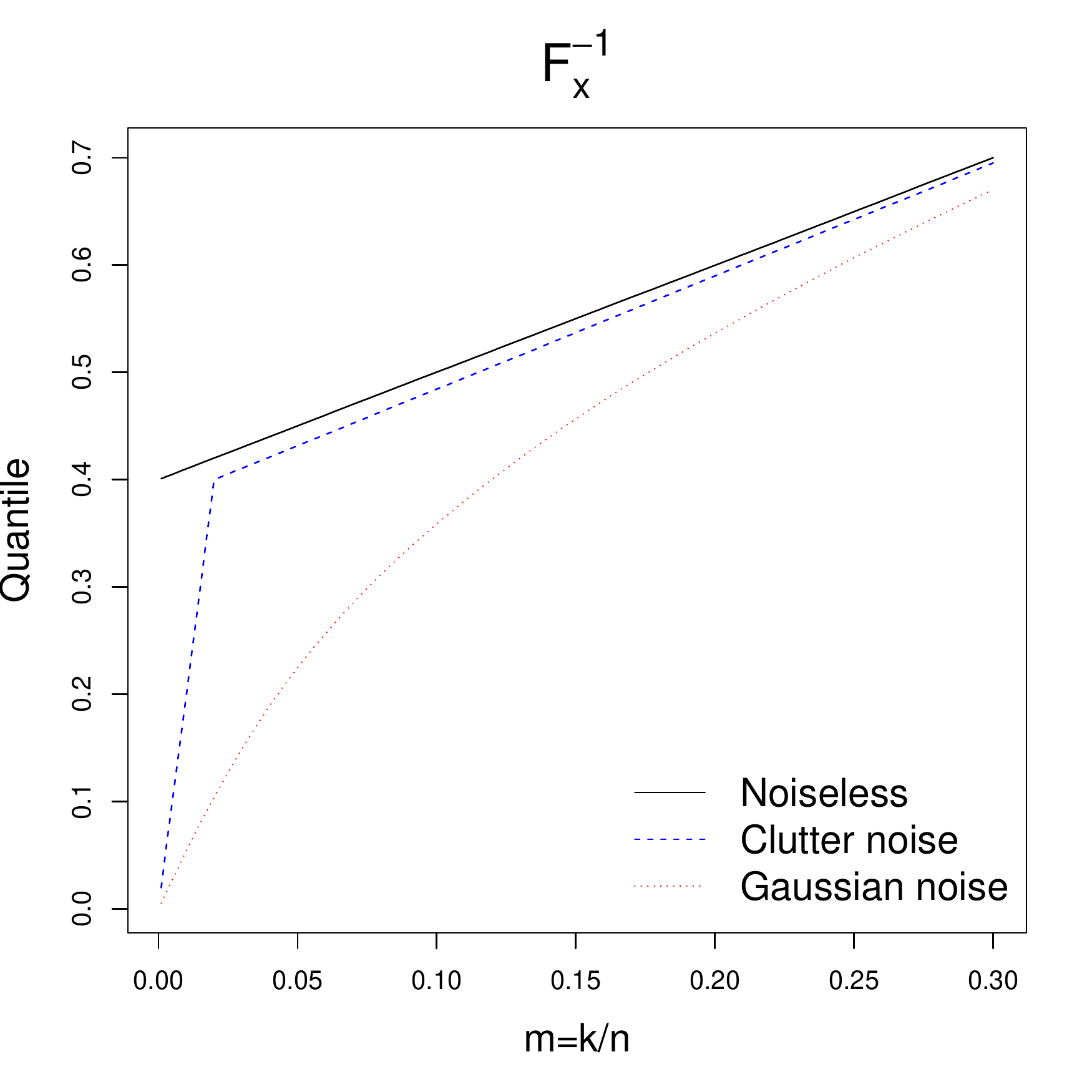} 
   \end{minipage}\hfill
   \begin{minipage}[b]{0.5\linewidth}   
      \centering \includegraphics[scale=0.42]{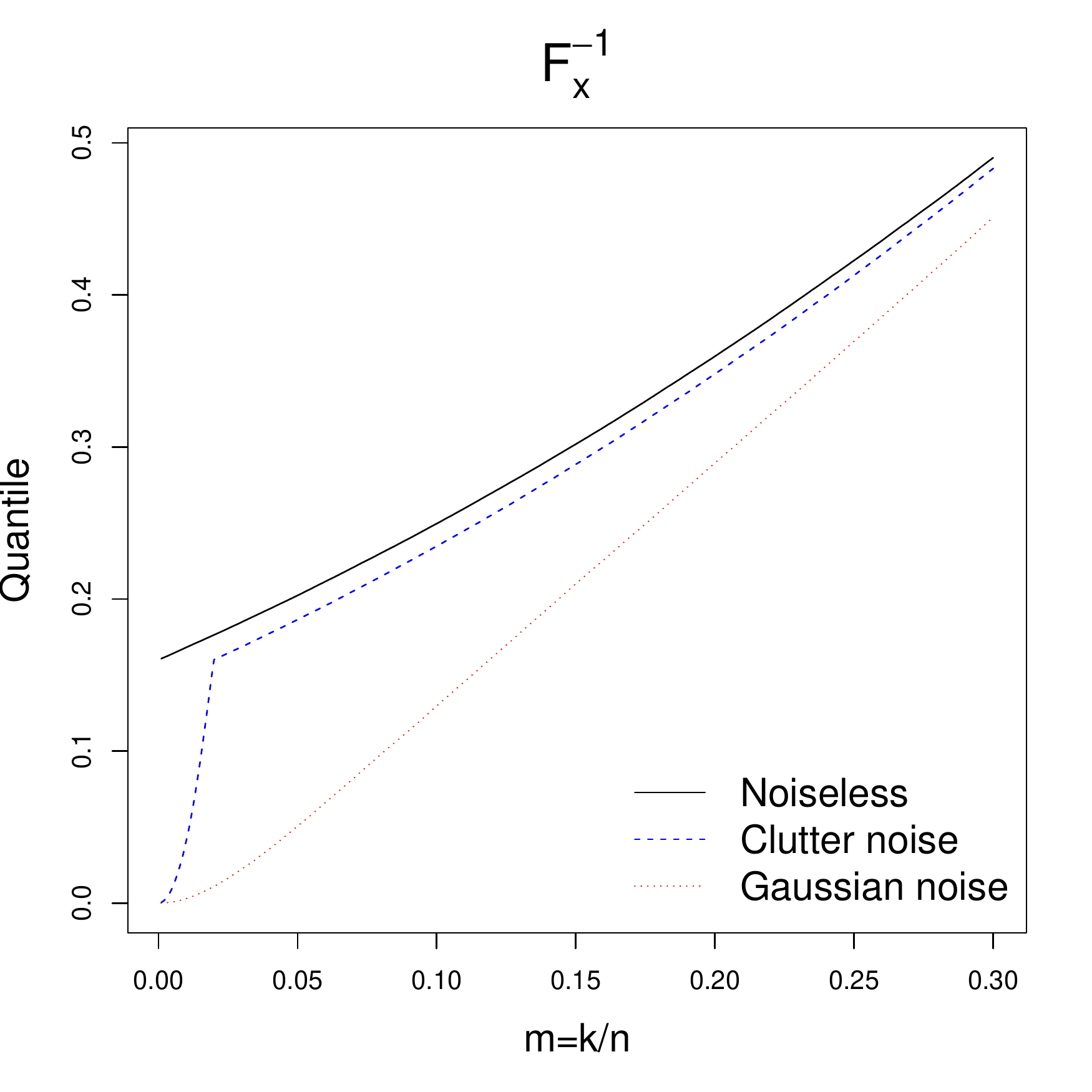} 
   \end{minipage}
   \begin{minipage}[b]{0.5\linewidth}
      \centering \includegraphics[scale=0.42]{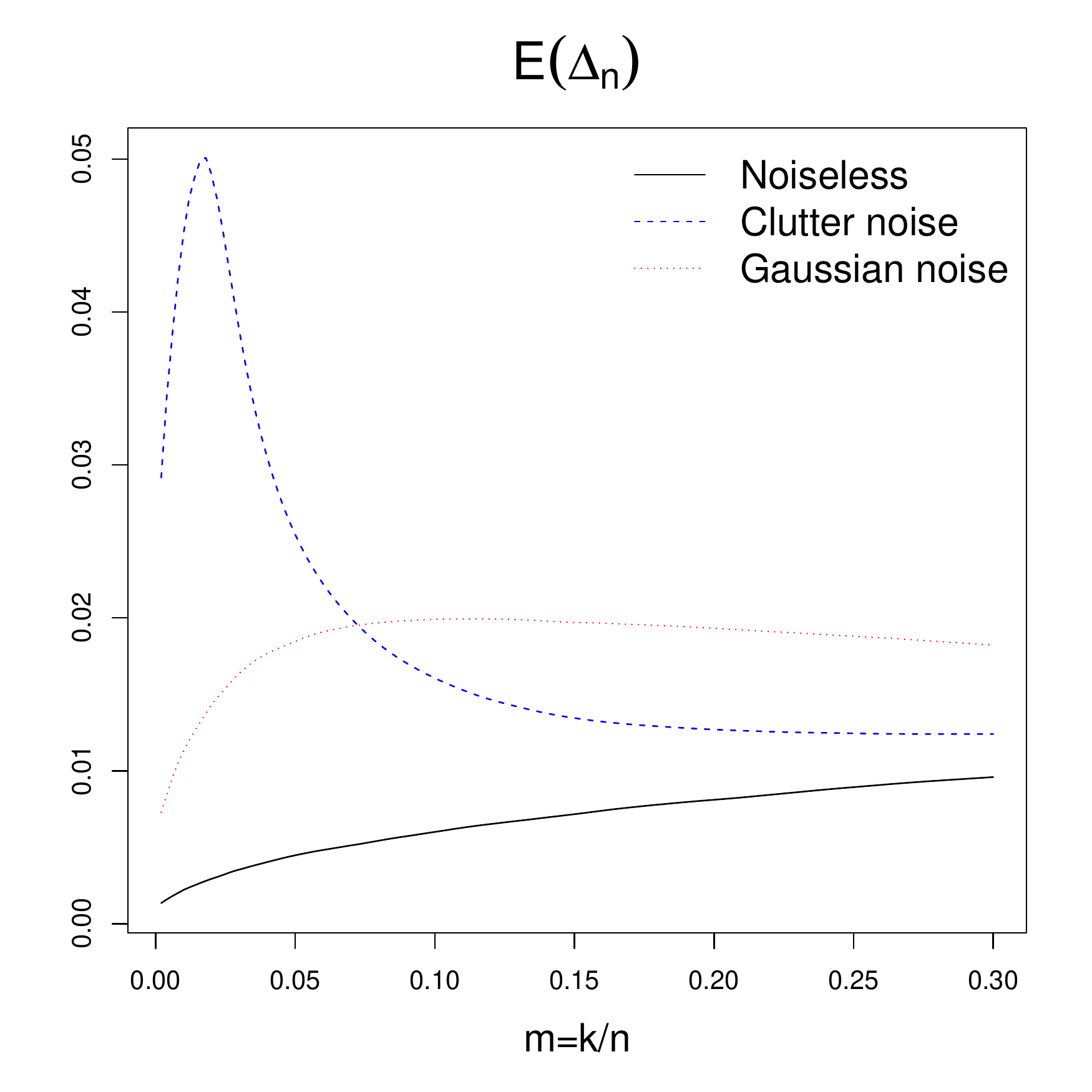} 
   \end{minipage}\hfill
   \begin{minipage}[b]{0.5\linewidth}   
      \centering \includegraphics[scale=0.42]{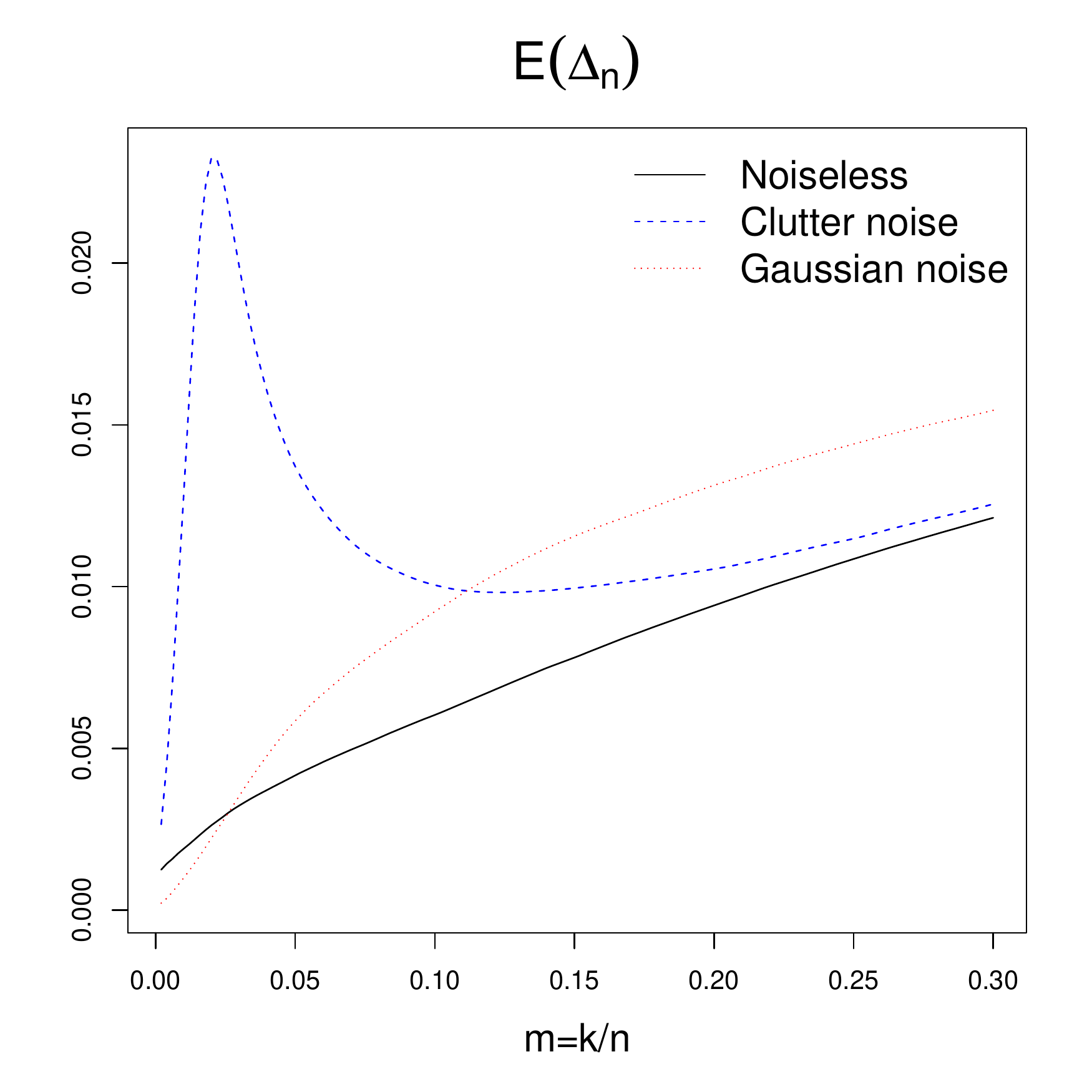} 
   \end{minipage}
   \begin{minipage}[b]{0.5\linewidth}
      \centering \includegraphics[scale=0.42]{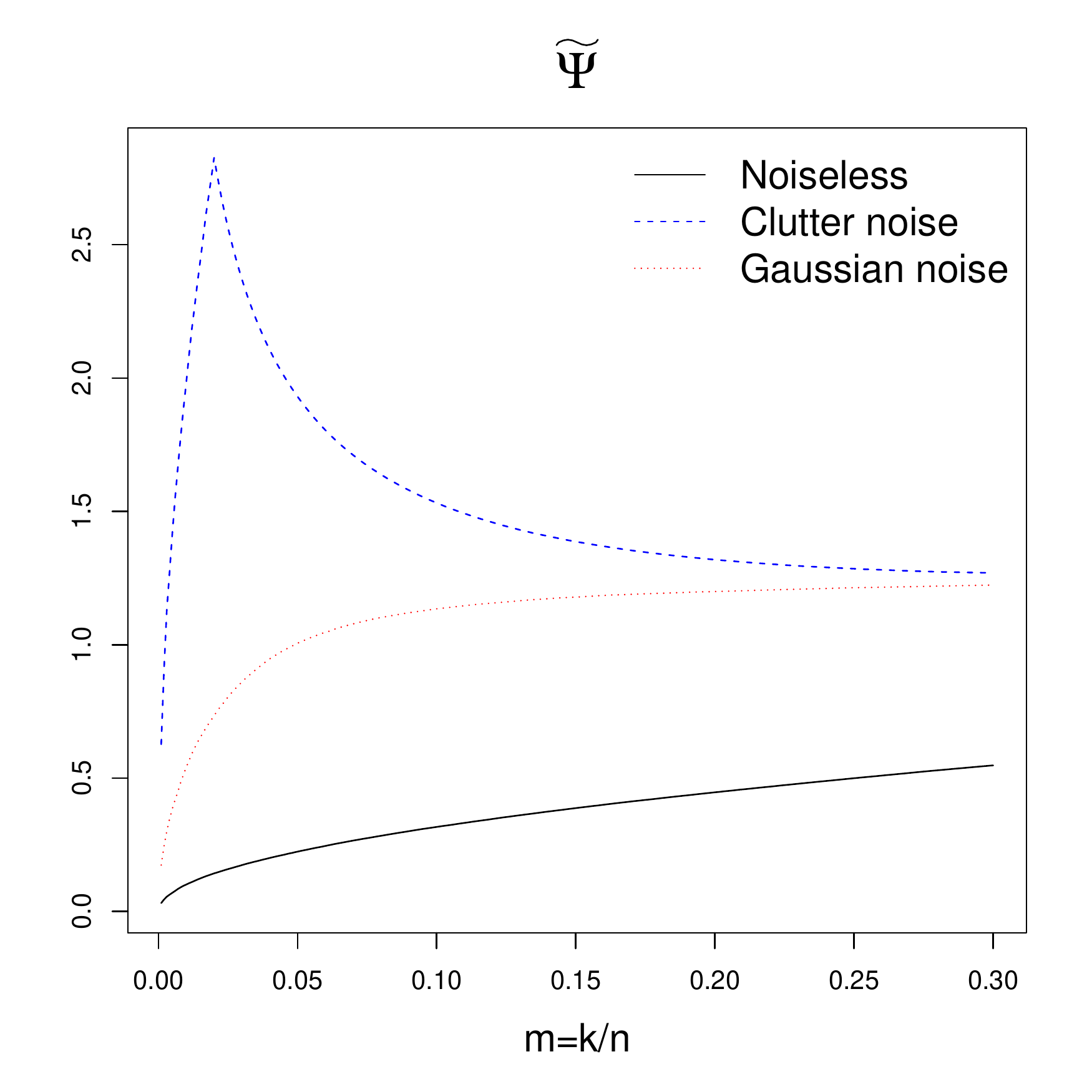} 	
   \end{minipage}\hfill
   \begin{minipage}[b]{0.5\linewidth}   
      \centering \includegraphics[scale=0.42]{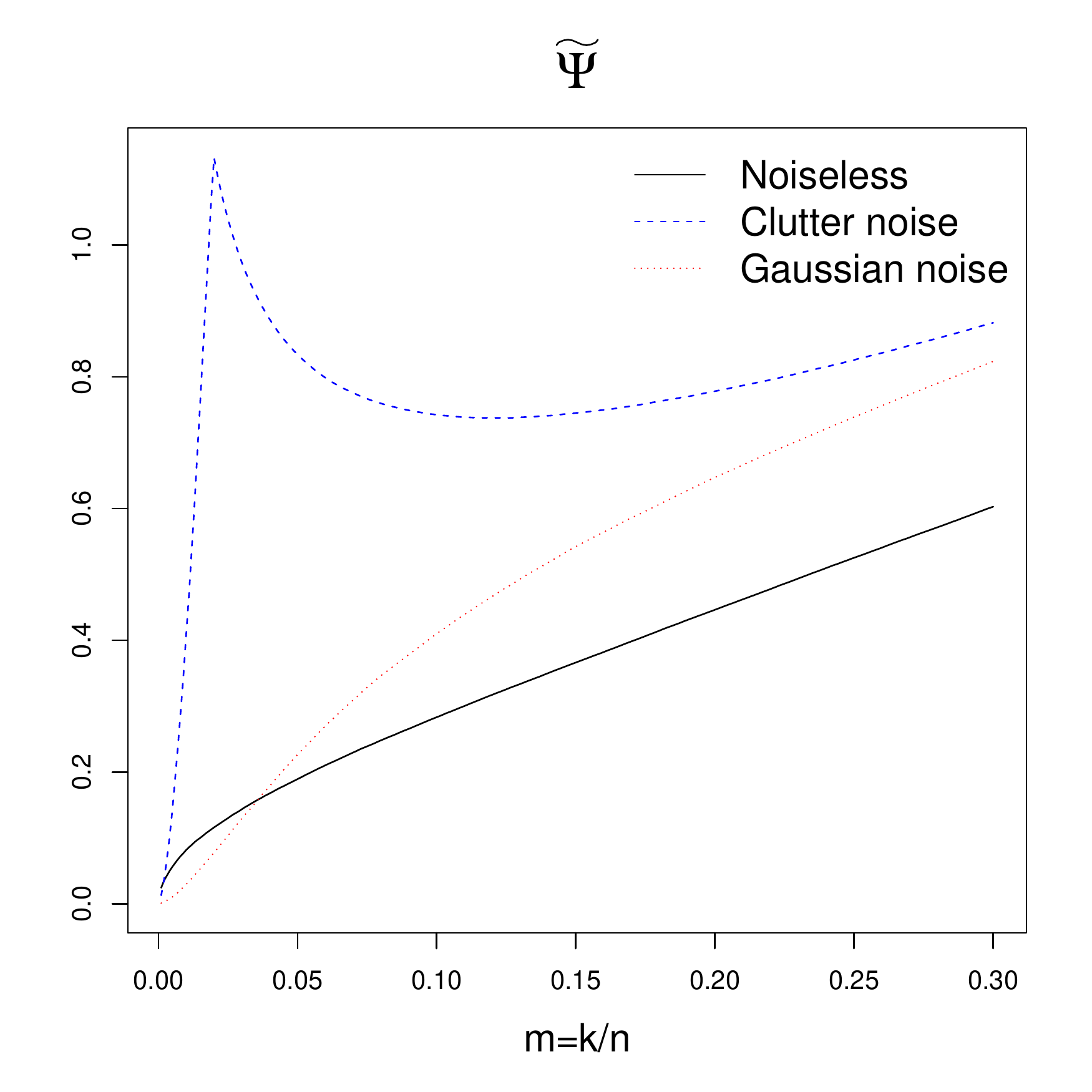} 
   \end{minipage}
   \caption{Quantiles functions $F_{x,r}^{-1}$ (top), expected error $\E \Delta_{n,\frac kn,r}(x) $ (middle) and
theoretical upper bounds $\tilde \Psi$ (bottom) with powers $r=1$ (left) and $r=2$ (right), for the Segment Experiment.}
   \label{fig:ErrorSegment}
\end{figure}

\begin{figure}[pH]
   \begin{minipage}[b]{0.5\linewidth}
      \centering \includegraphics[scale=0.42]{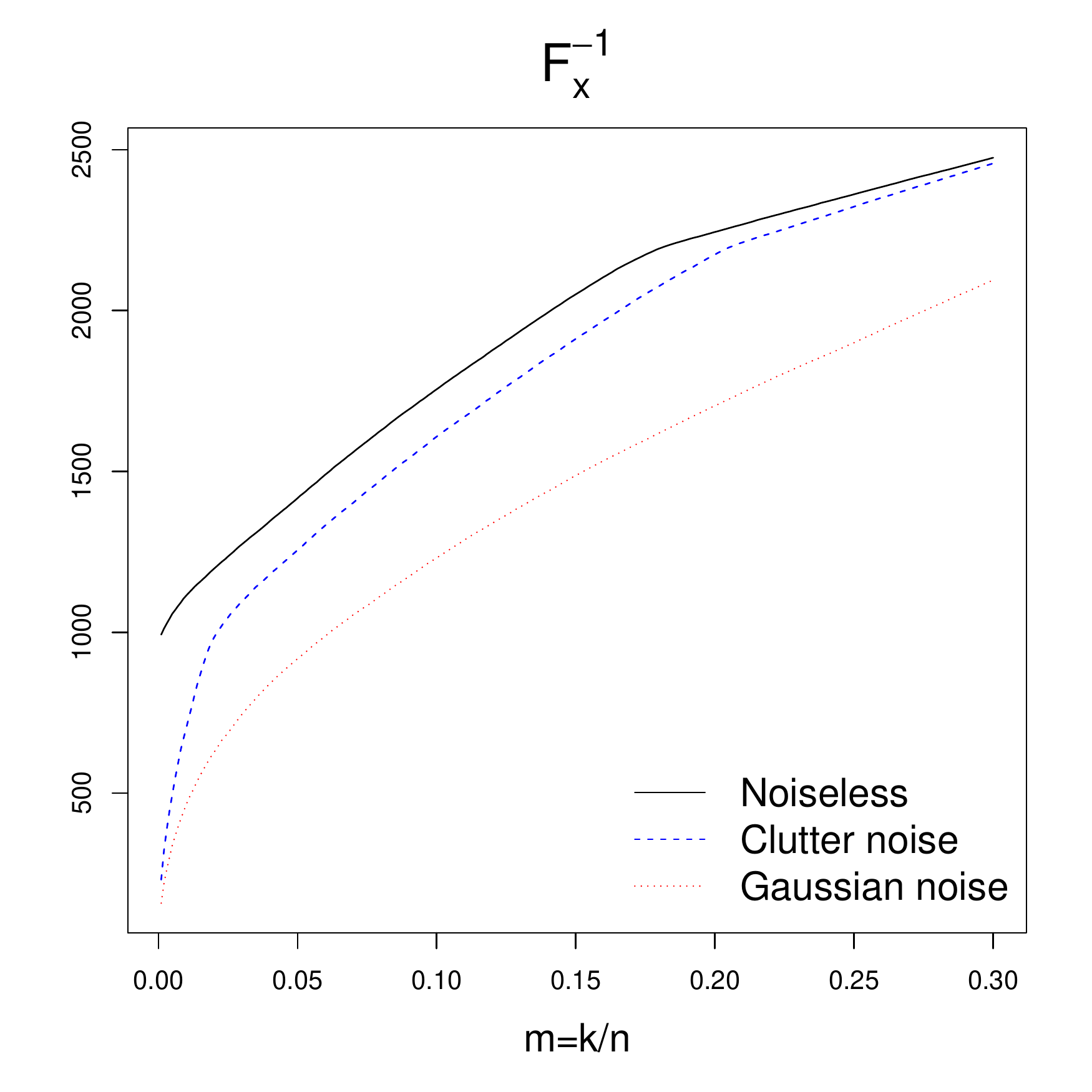} 
   \end{minipage}\hfill
   \begin{minipage}[b]{0.5\linewidth}   
      \centering \includegraphics[scale=0.42]{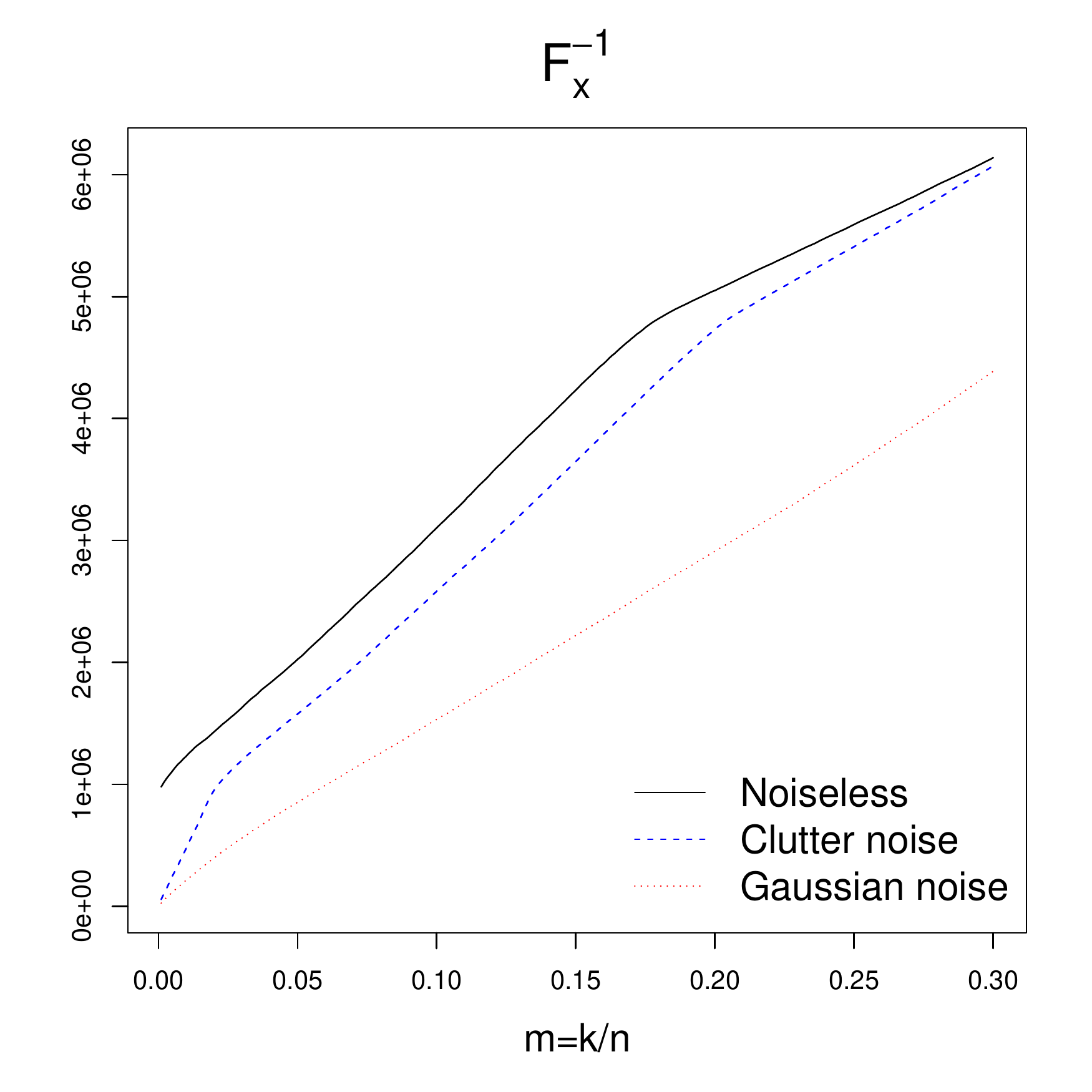} 
   \end{minipage}
   \begin{minipage}[b]{0.5\linewidth}
      \centering \includegraphics[scale=0.42]{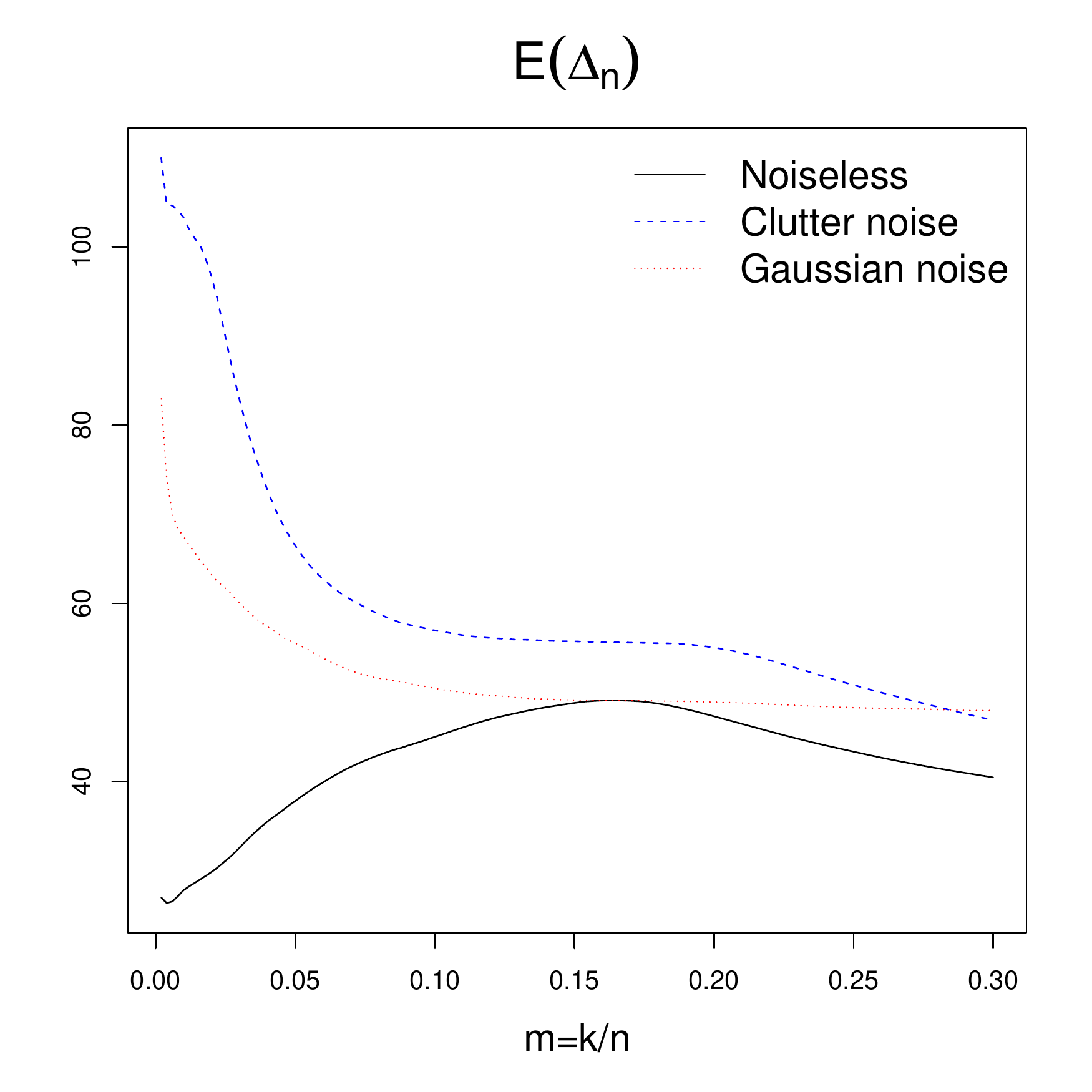} 
   \end{minipage}\hfill
   \begin{minipage}[b]{0.5\linewidth}   
      \centering \includegraphics[scale=0.42]{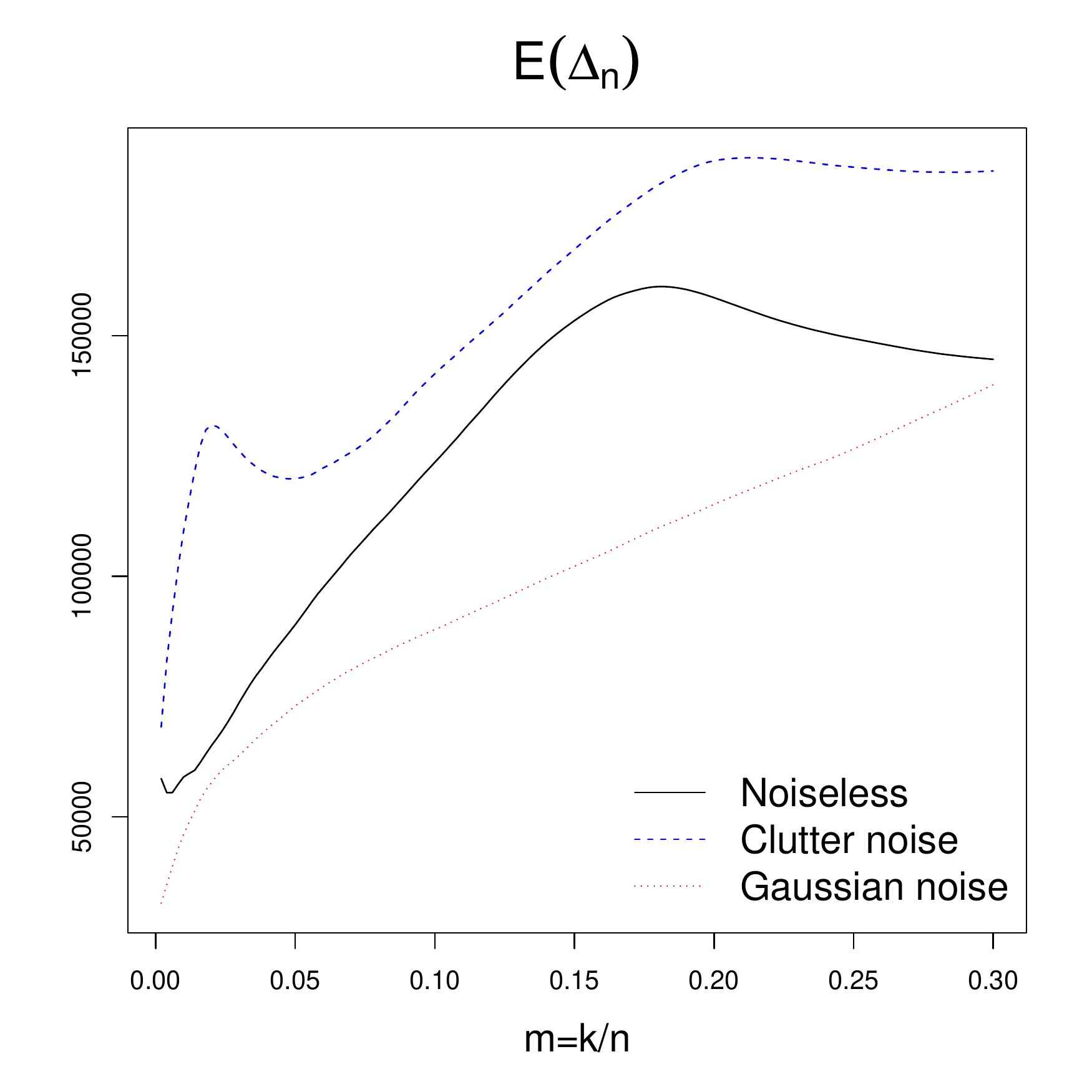} 
   \end{minipage}
   \begin{minipage}[b]{0.5\linewidth}
      \centering \includegraphics[scale=0.42]{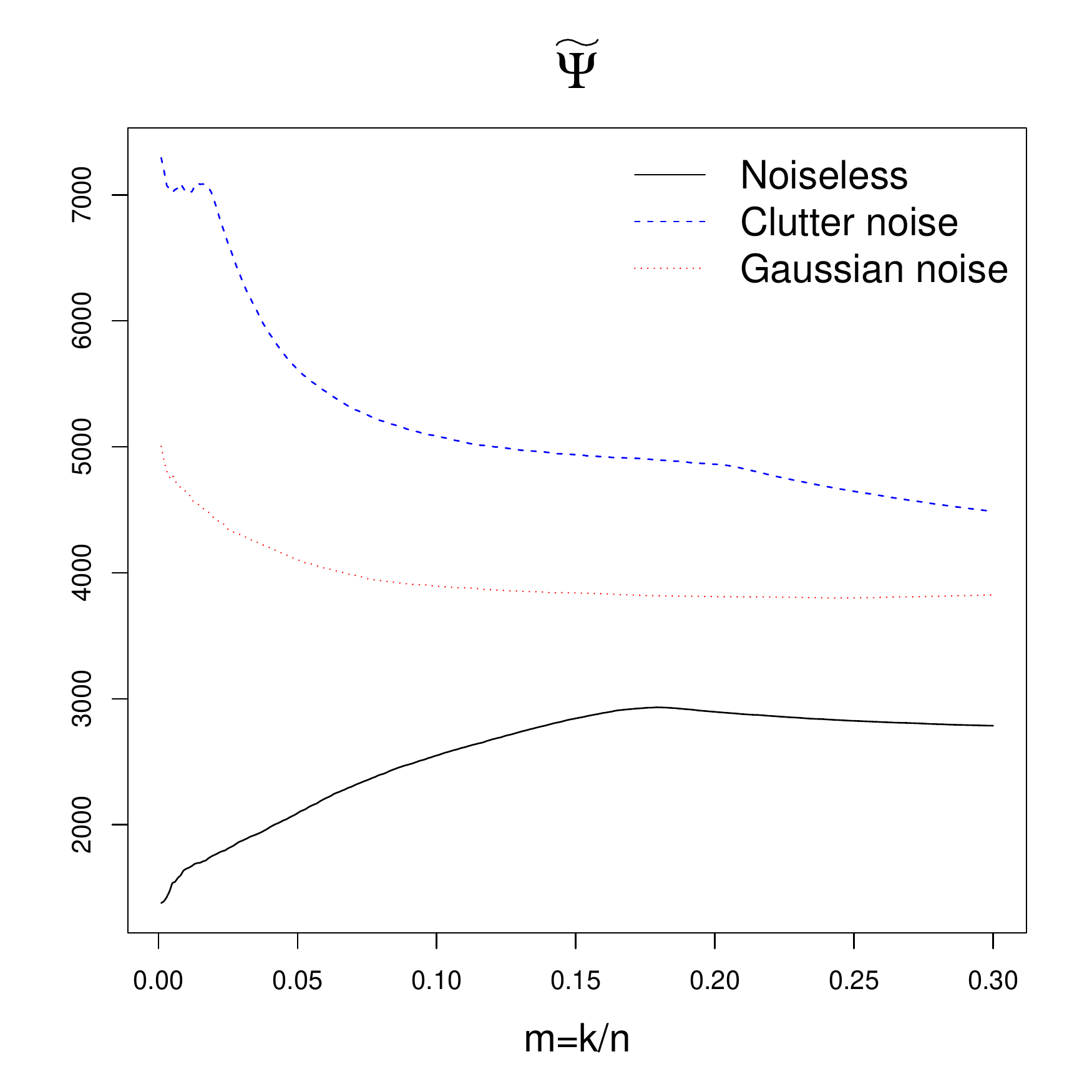} 	
   \end{minipage}\hfill
   \begin{minipage}[b]{0.5\linewidth}   
      \centering \includegraphics[scale=0.42]{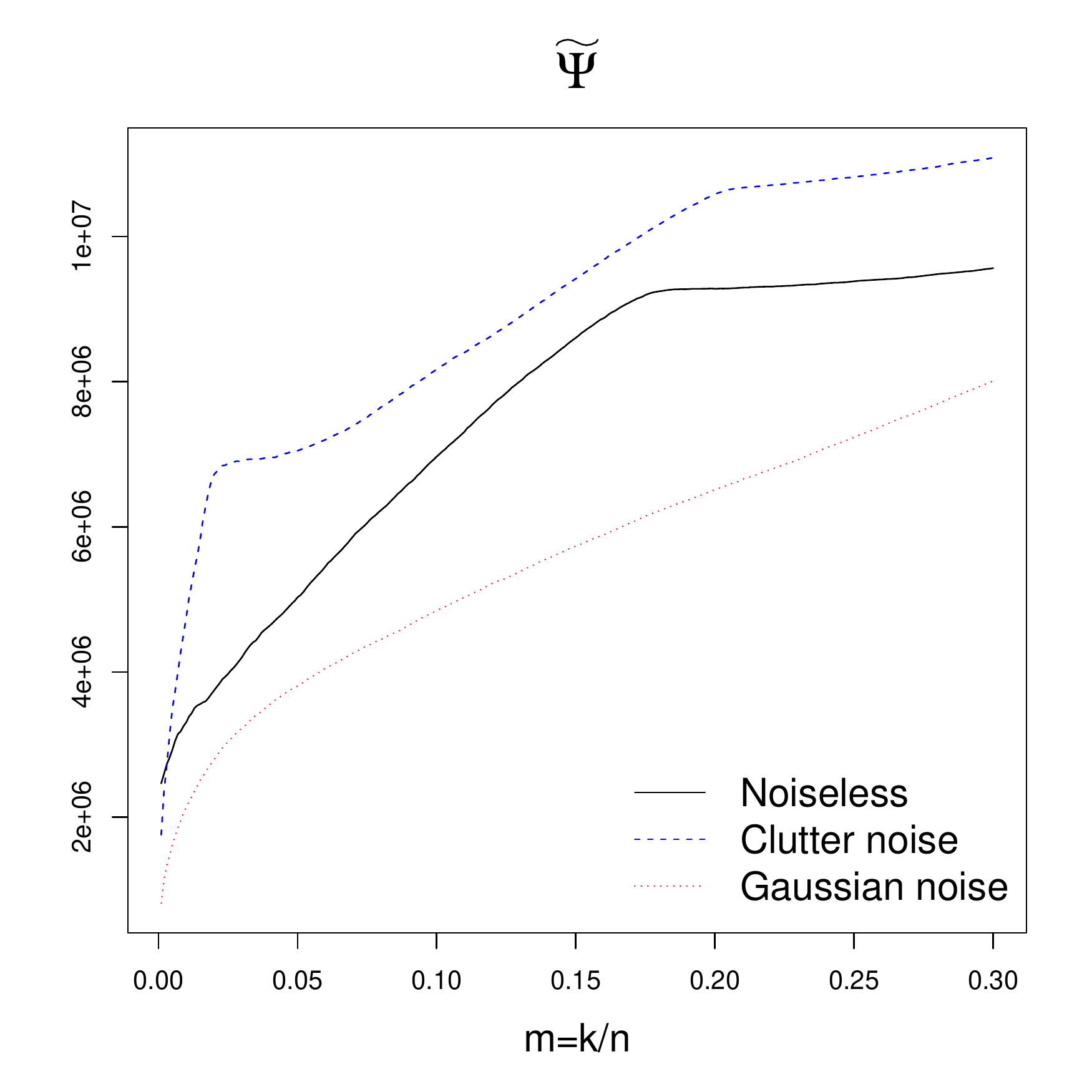} 
   \end{minipage}
   \caption{Quantiles functions $F_{x,r}^{-1}$ (top), expected error  $\E \Delta_{n,\frac kn,r}(x) $ (middle) and
theoretical upper bounds $\tilde \Psi$ (bottom) with powers $r=1$ (left) and $r=2$ (right), for the 2-d Shape Experiment.}
      \label{fig:Error2dshape}
\end{figure}

\begin{figure}[pH]
   \begin{minipage}[b]{0.5\linewidth}
      \centering \includegraphics[scale=0.42]{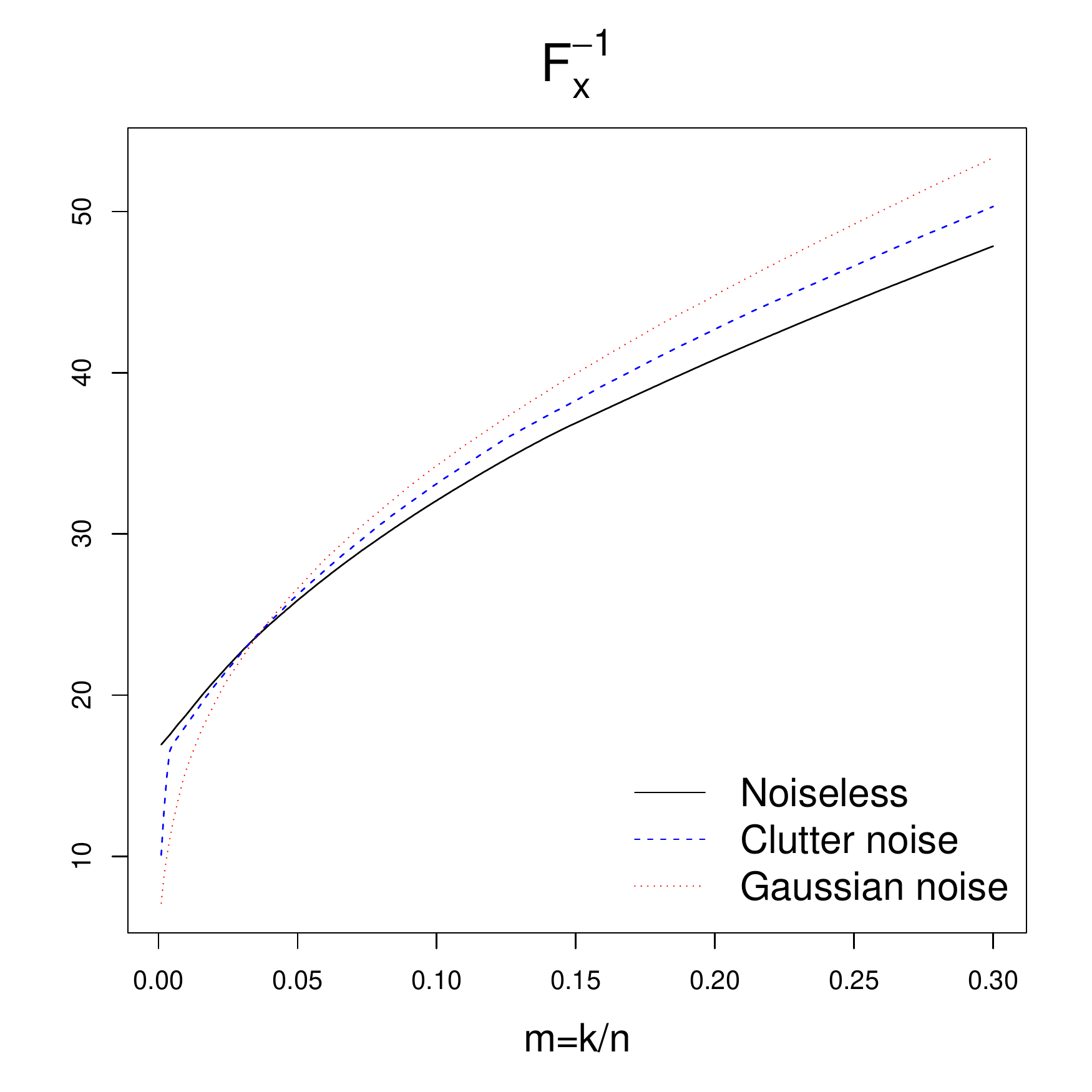} 
   \end{minipage}\hfill
   \begin{minipage}[b]{0.5\linewidth}   
      \centering \includegraphics[scale=0.42]{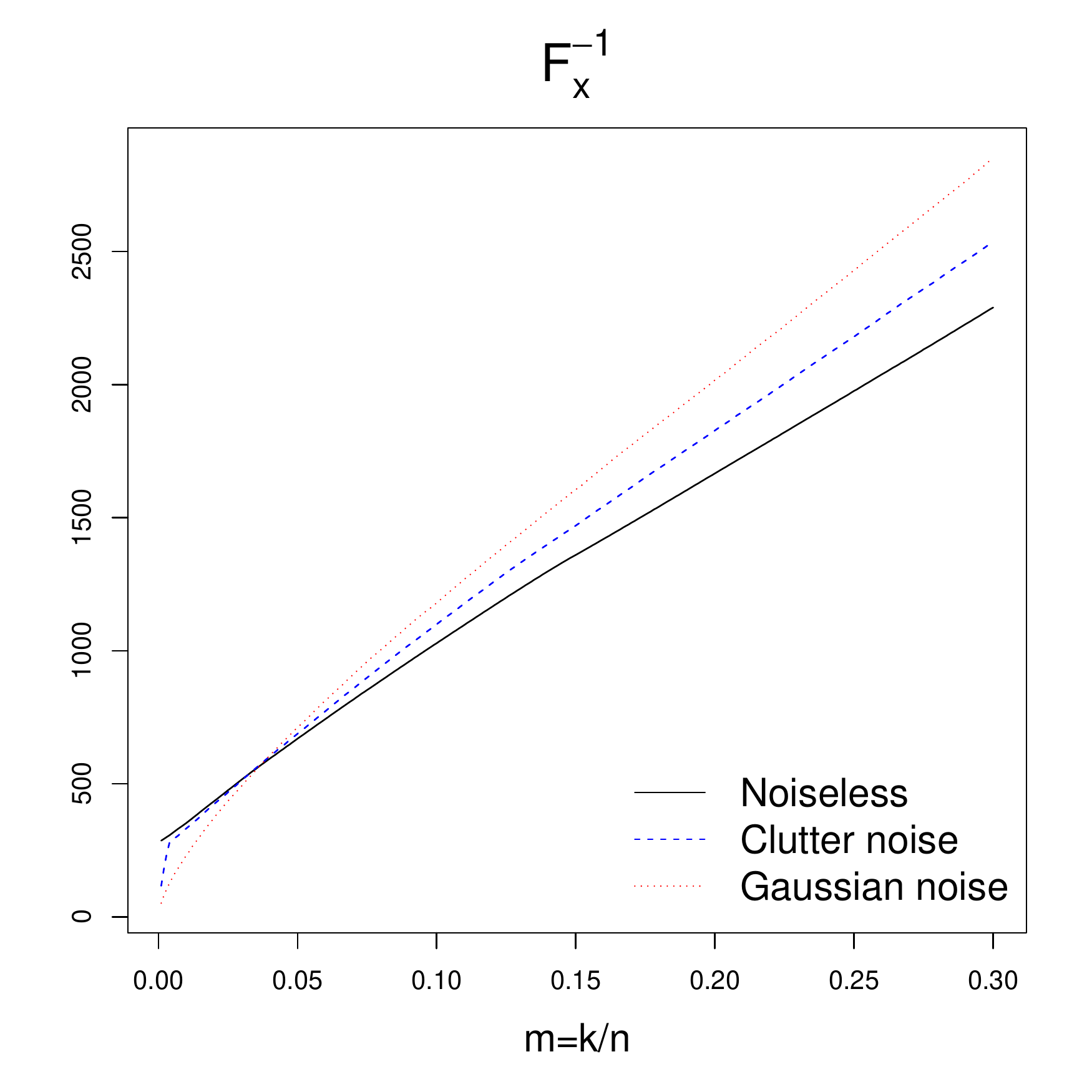} 
   \end{minipage}
   \begin{minipage}[b]{0.5\linewidth}
      \centering \includegraphics[scale=0.42]{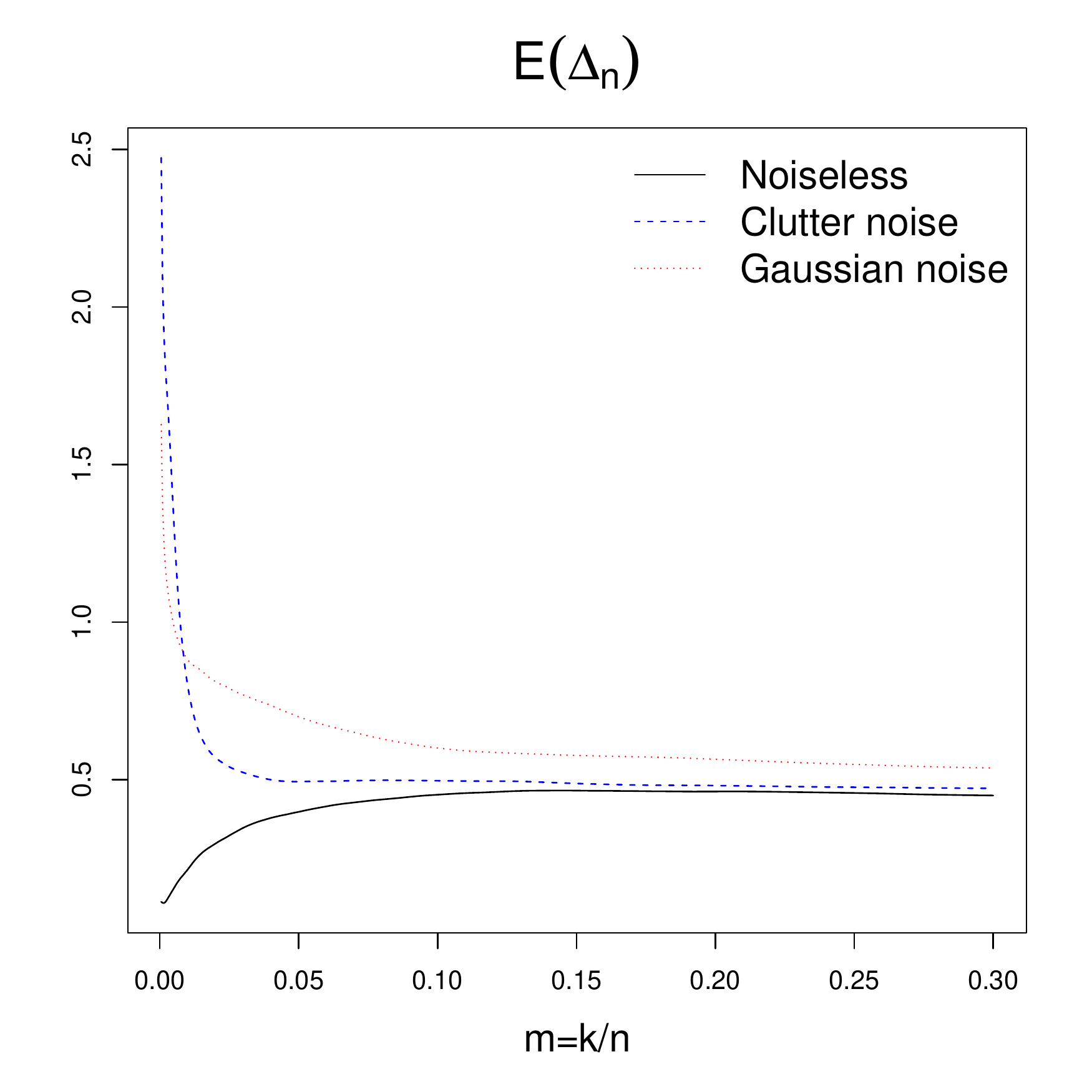} 
   \end{minipage}\hfill
   \begin{minipage}[b]{0.5\linewidth}   
      \centering \includegraphics[scale=0.42]{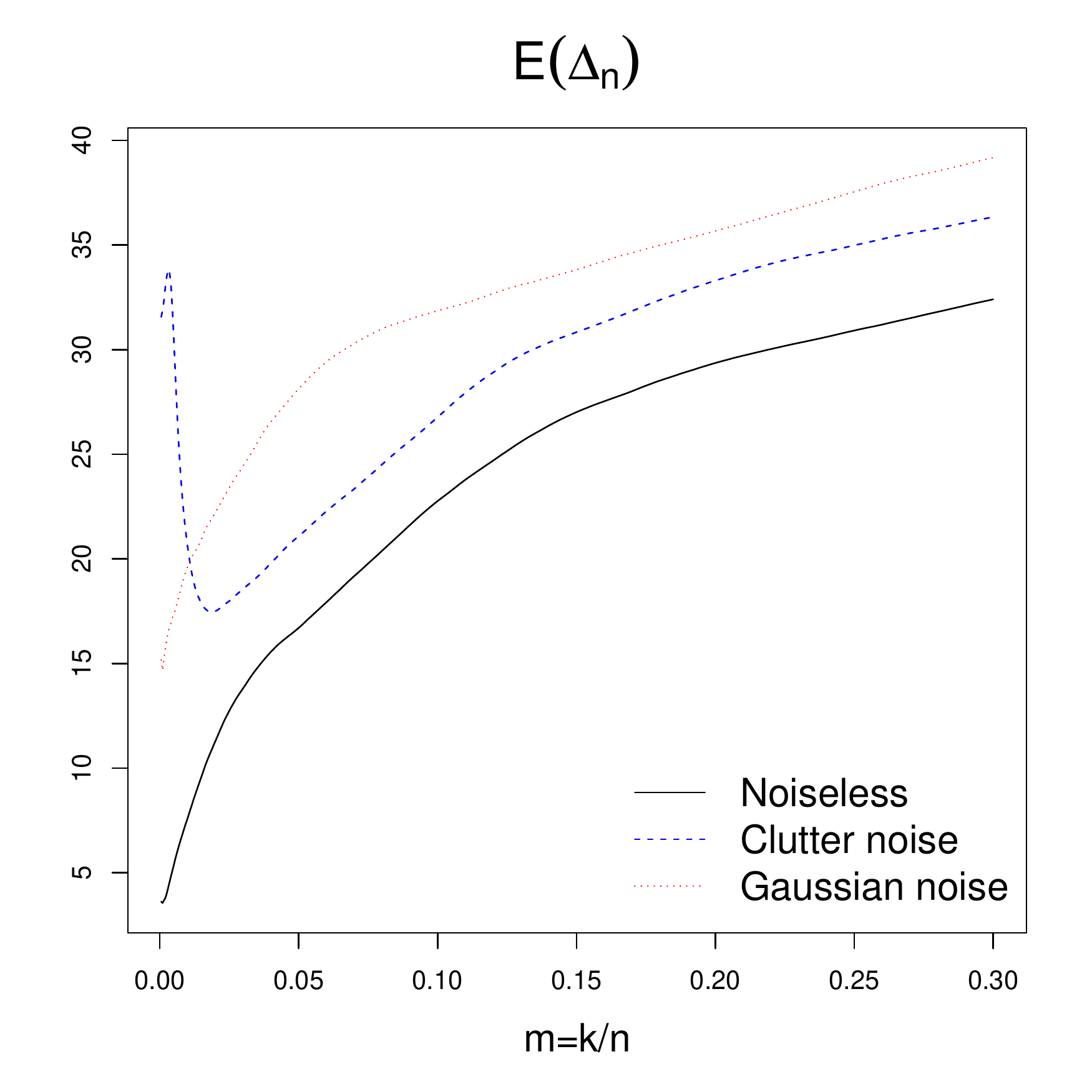} 
   \end{minipage}
   \begin{minipage}[b]{0.5\linewidth}
      \centering \includegraphics[scale=0.42]{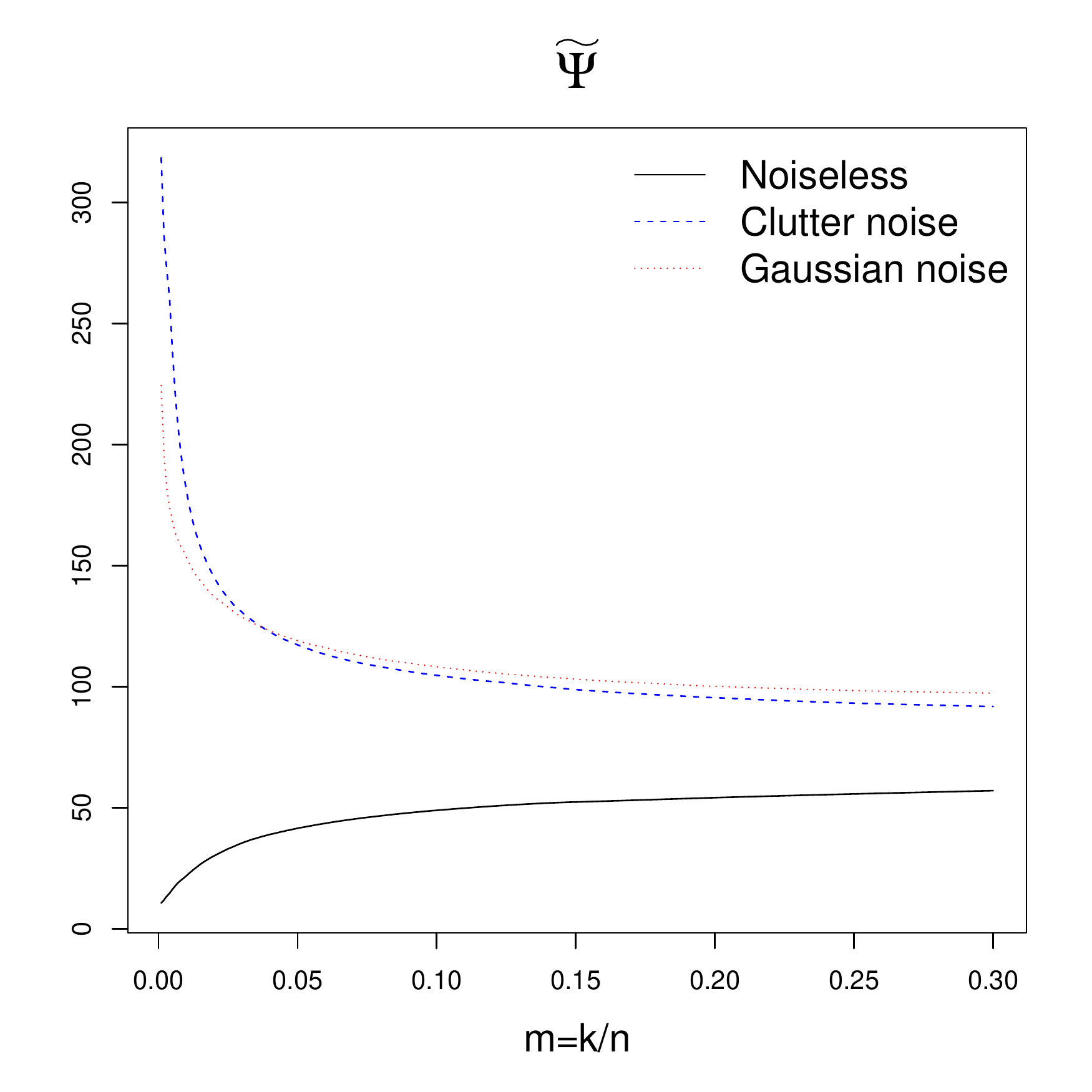} 	
   \end{minipage}\hfill
   \begin{minipage}[b]{0.5\linewidth}   
      \centering \includegraphics[scale=0.42]{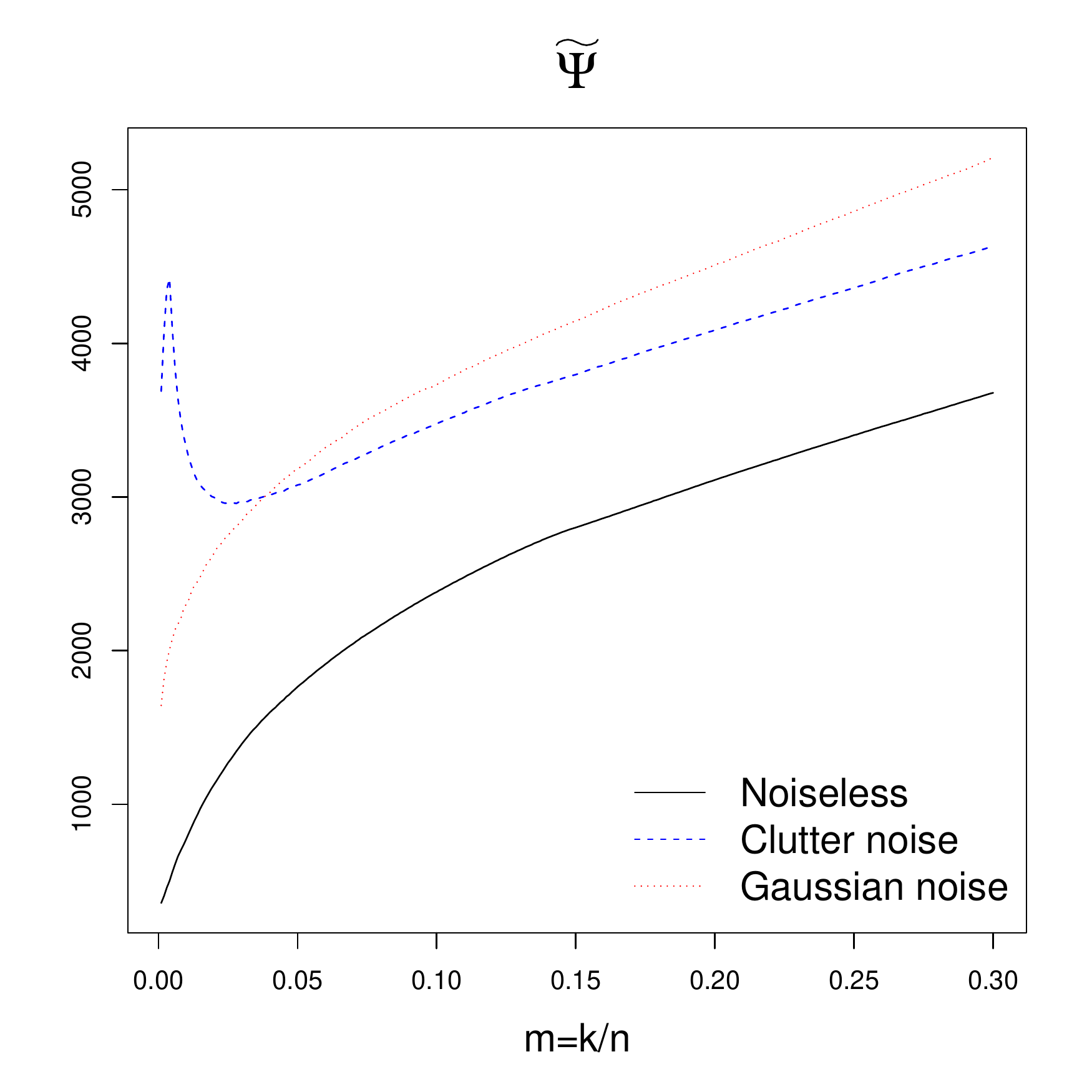} 
   \end{minipage}
   \caption{Quantiles functions $F_{x,r}^{-1}$ (top), expected error  $\E \Delta_{n,\frac kn,r}(x) $ (middle) and
theoretical upper bounds $\tilde \Psi$ (bottom) with powers $r=1$ (left) and $r=2$ (right), for the Fish Experiment.}
      \label{fig:ErrorFish}
\end{figure}

\begin{figure}[pH]
   \begin{minipage}[b]{0.32\linewidth}
      \centering \includegraphics[scale=0.35]{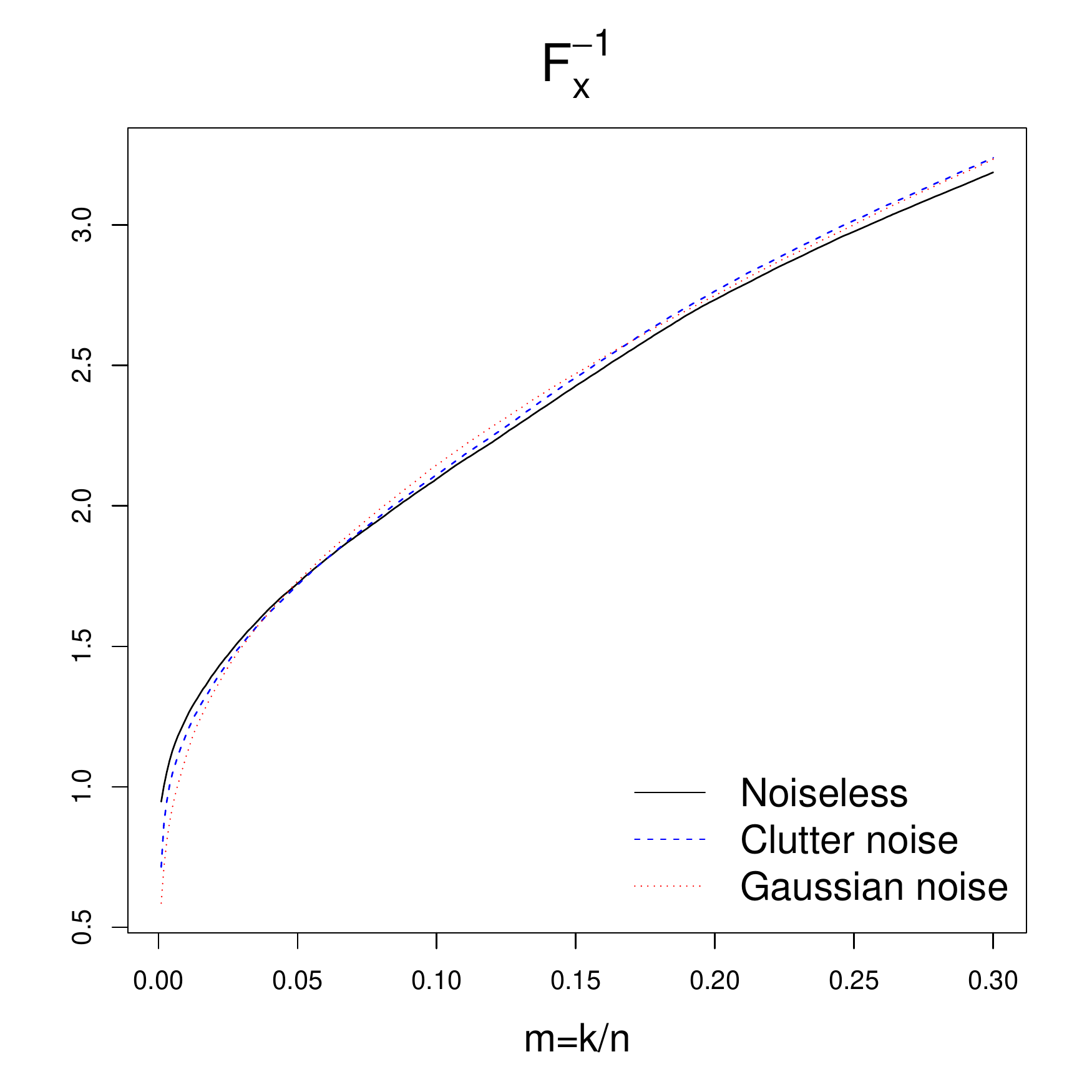} 
   \end{minipage}\hfill
   \begin{minipage}[b]{0.32\linewidth}   
      \centering \includegraphics[scale=0.35]{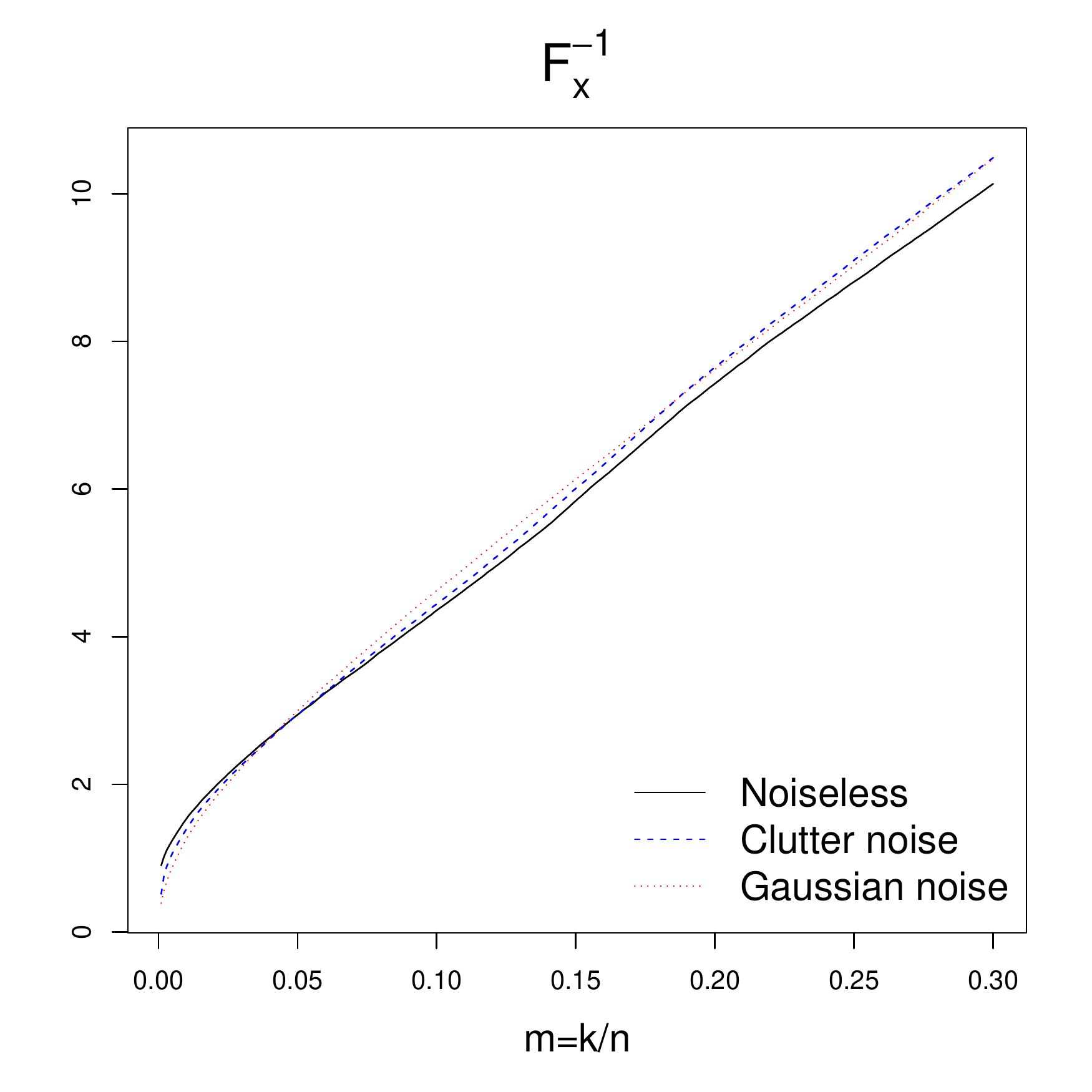} 
   \end{minipage}
   \begin{minipage}[b]{0.32\linewidth}   
      \centering \includegraphics[scale=0.35]{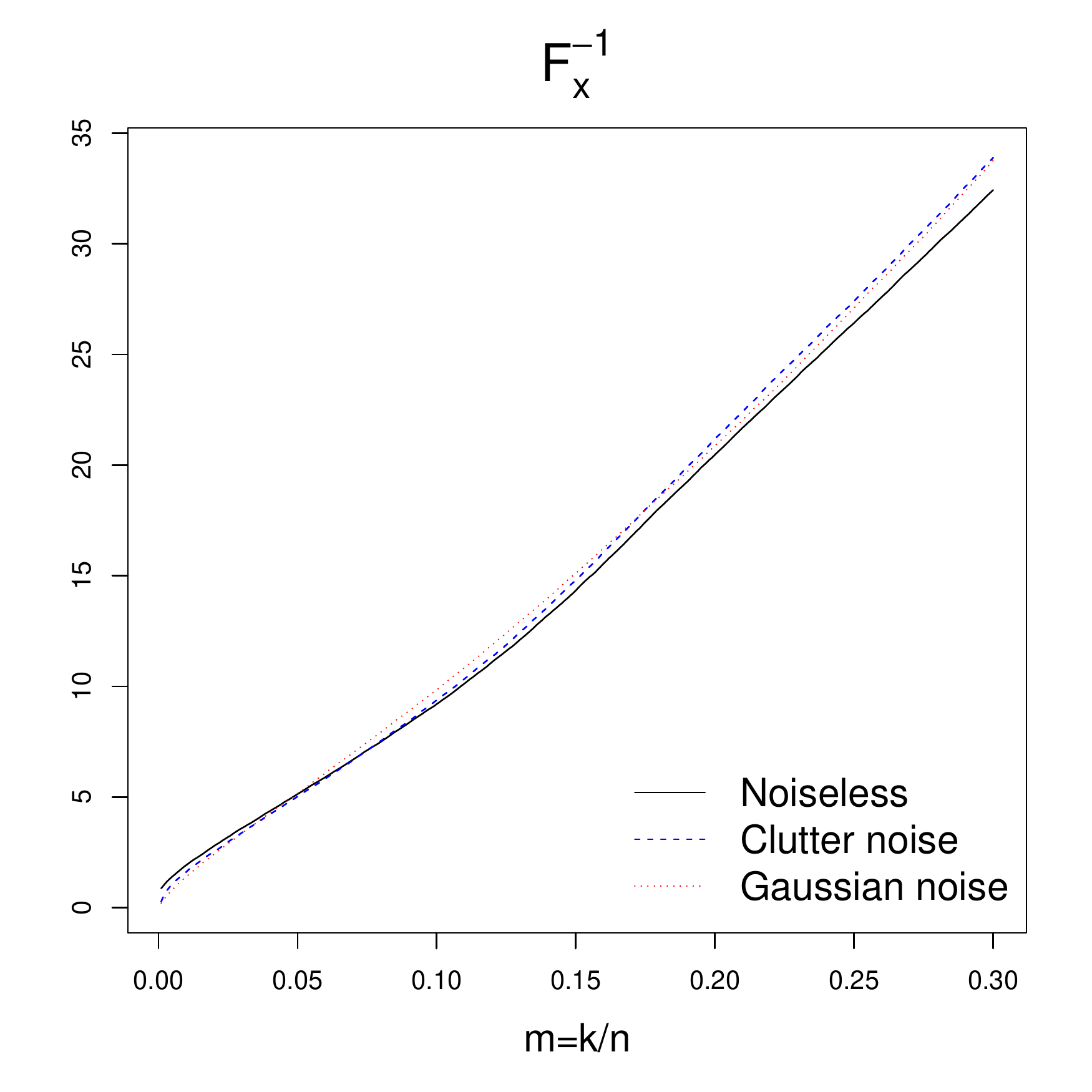} 
   \end{minipage}   
   \begin{minipage}[b]{0.32\linewidth}
      \centering \includegraphics[scale=0.35]{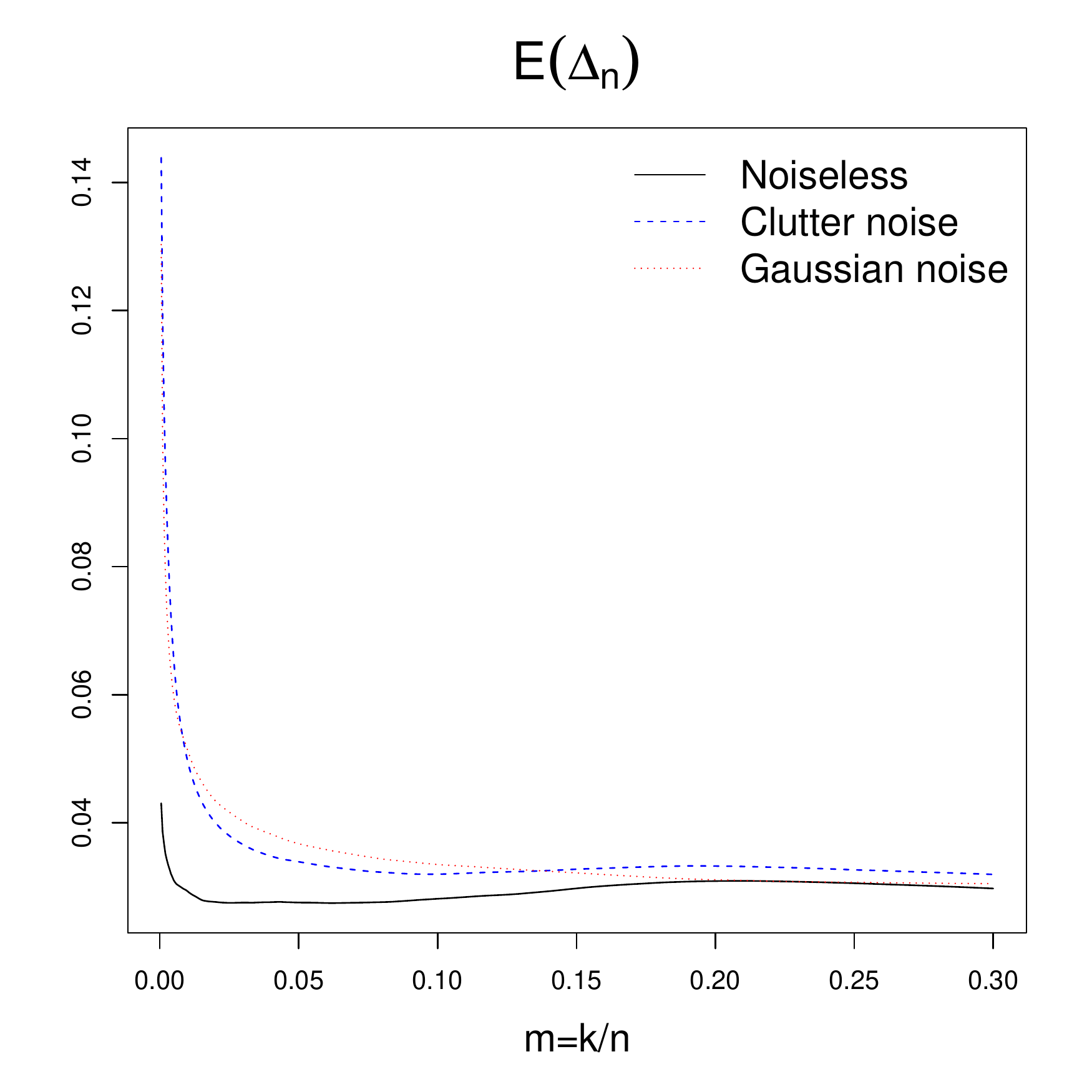} 
   \end{minipage}\hfill
   \begin{minipage}[b]{0.32\linewidth}   
      \centering \includegraphics[scale=0.35]{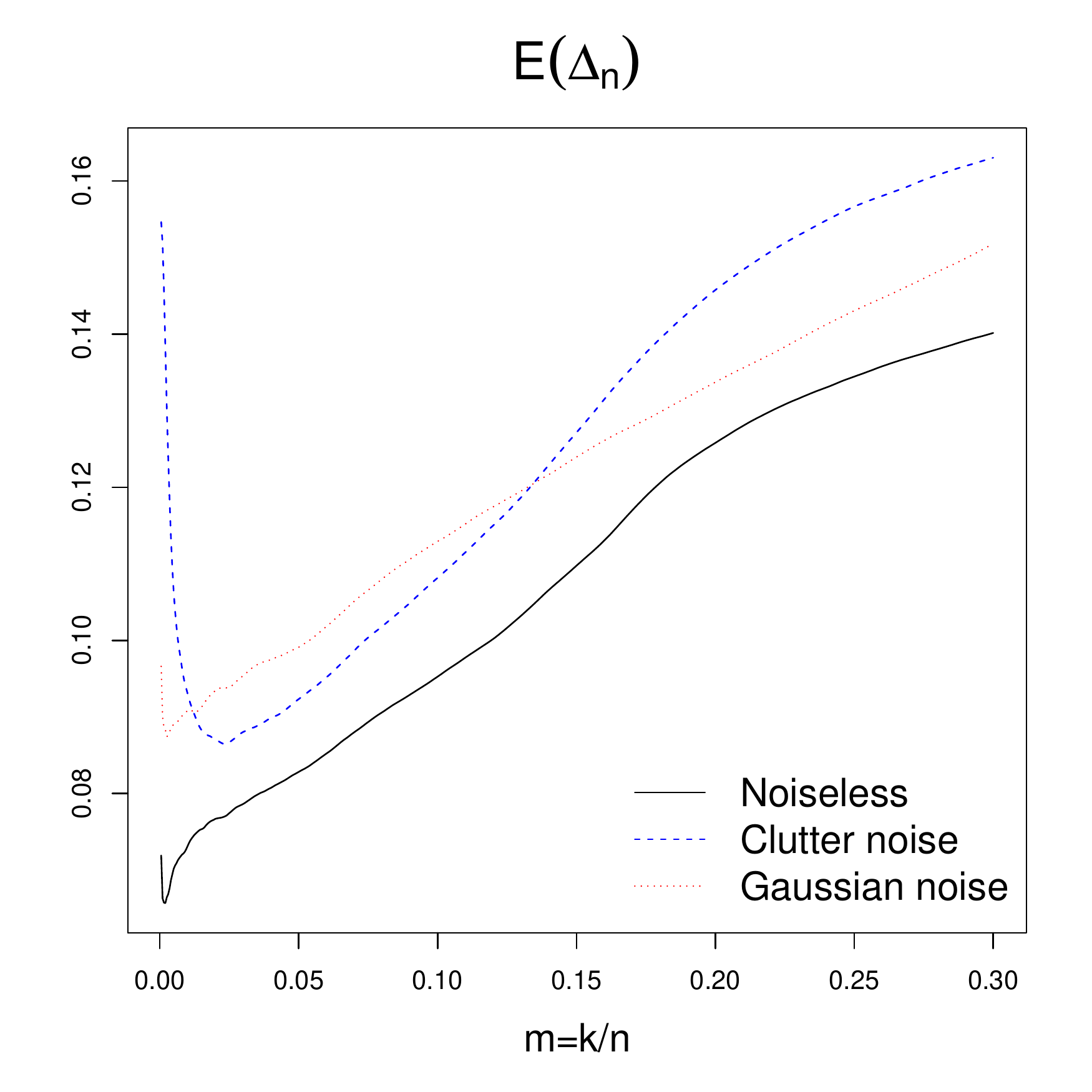} 
   \end{minipage}
    \begin{minipage}[b]{0.32\linewidth}   
      \centering \includegraphics[scale=0.35]{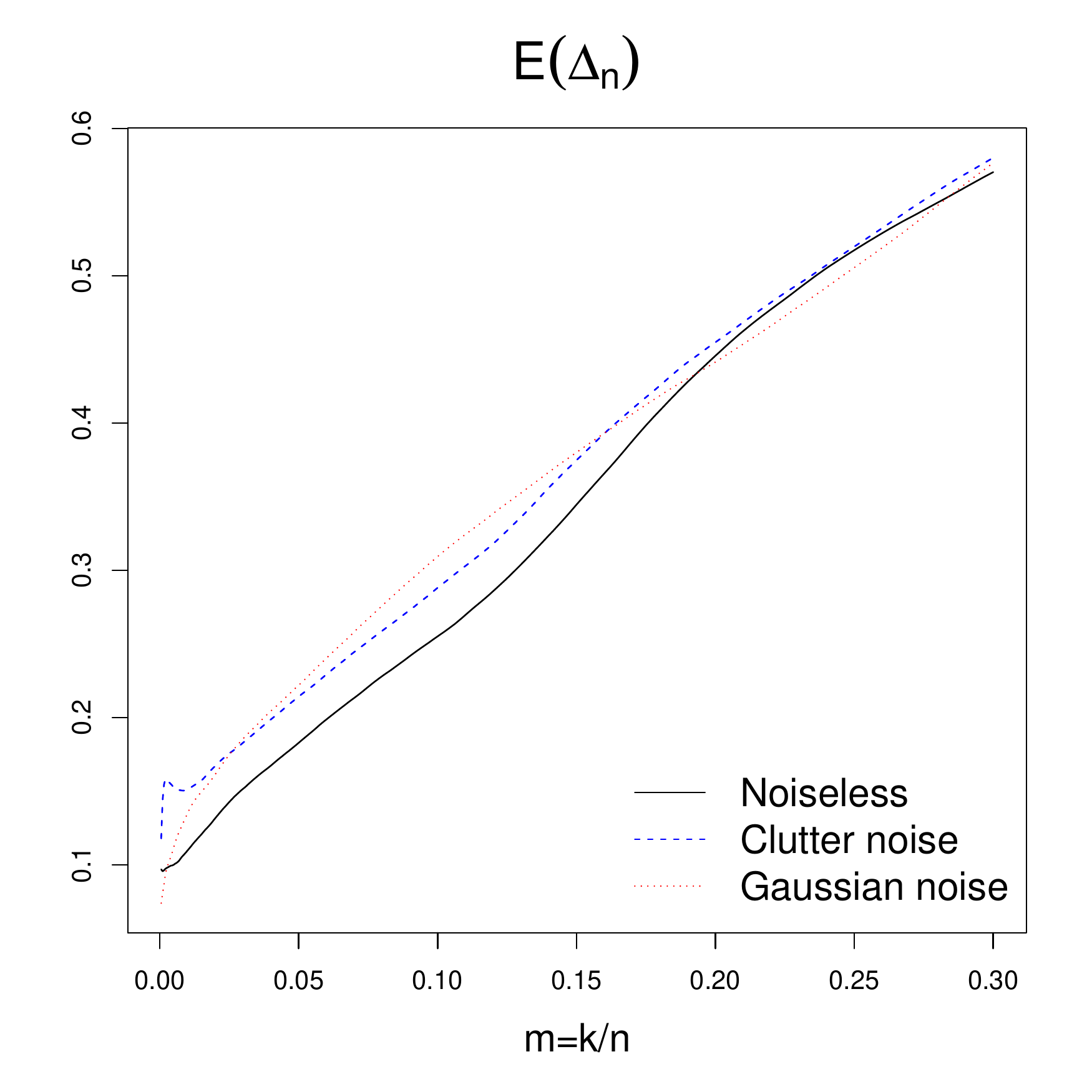} 
   \end{minipage}  
   \begin{minipage}[b]{0.32\linewidth}
      \centering \includegraphics[scale=0.35]{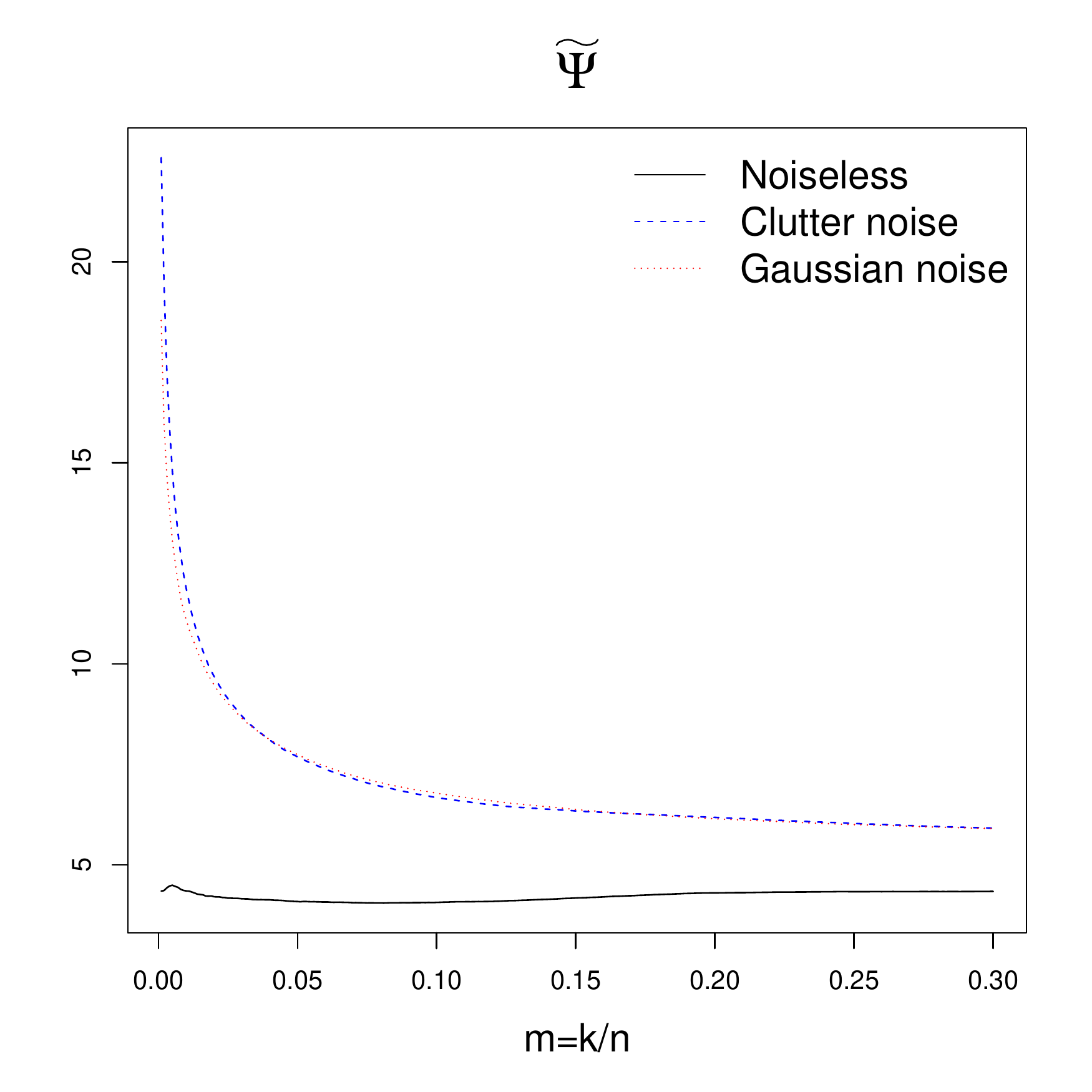} 
   \end{minipage}\hfill
   \begin{minipage}[b]{0.32\linewidth}   
      \centering \includegraphics[scale=0.35]{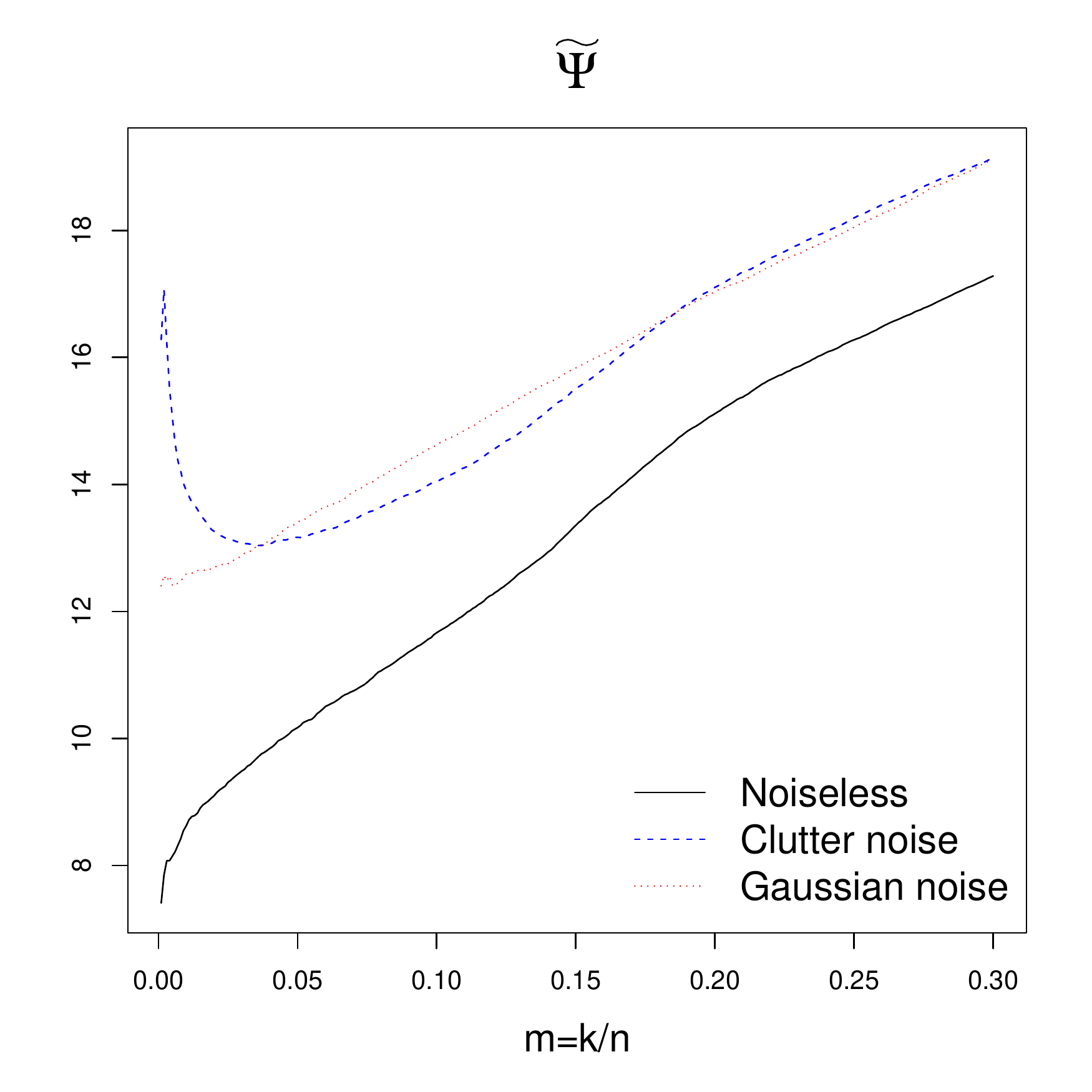} 
   \end{minipage}
   \begin{minipage}[b]{0.32\linewidth}   
      \centering \includegraphics[scale=0.35]{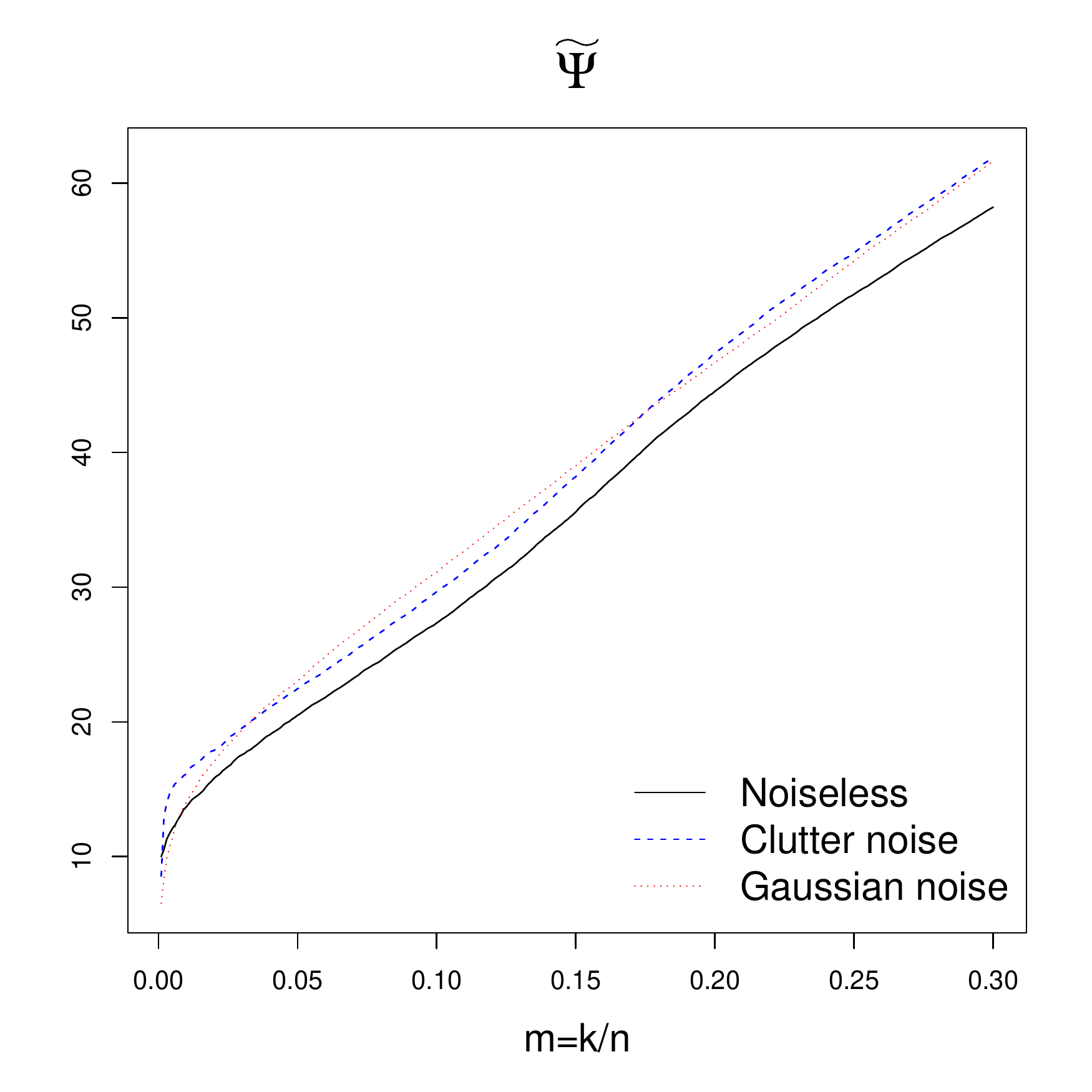} 
   \end{minipage}   
   \caption{Quantiles functions $F_{x,r}^{-1}$ (top), expected error  $\E \Delta_{n,\frac kn,r}(x) $ (middle) and
theoretical upper bounds $\tilde \Psi$ (bottom) with powers $r=1$ (left) and $r=2$ (right), for the Tangle Cube Experiment.}
        \label{fig:ErrorTangle} 
\end{figure}

\section{Conclusion}

When the data is corrupted by noise, the distance to measure is one clue for performing robust geometric inference. For instance it can be used for  support  estimation and for topological data analysis using persistence diagrams, as proposed in \cite{chazal2014robust}. In practice,  a ``plug-in" approach is adopted by replacing the measure by its empirical counterpart in the definition of the DTM. The main result  of this paper is providing sharp non asymptotic bounds on the deviations of the DTEM. 

The DTM has been recently extended to the context of metric spaces in~\cite{buchet2015efficient}. For the sake of simplicity, we have  assumed  that $P$ is a probability measure in $\R^d$. However, all the results of the paper can be easily adapted to 
more general metric spaces by considering the push forward distribution of $P$ by $d(x,\cdot)^r$ where $d$ is the metric in the sampling space.

This  paper  is a  step toward  a complete theory about robust geometric inference. Our results  give preliminary insights about how tuning  the parameter  $m$ in the DTEM, which is a difficult question. The experiments proposed in Section~\ref{subs:expe} show  that the term  $\E   \Delta_{n,m,r}(x) $ does not have a typical monotonic behavior with regard to $m$ and thus classical model selection methods can be hardly applied to this problem. We intend to study this non standard model selection problem in future works.

\appendix 


\section{Rates of convergence  derived from the DTM stability} \label{sec:RatesStability}

\medskip

The DTM satisfies several stability properties for the Wasserstein metrics. In this section, rates of convergence of the DTEM are derived from stability results of the DTM together with known results about the convergence of the empirical measure under Wasserstein metrics. We check that the results derived in this way are not as tight as the results given in Section~\ref{sec:mainresults}.

 Let us first  remind the definition of the Wasserstein metrics in $\R^d$. For $r \geq 1$, the Wasserstein distance $W_r$ between two probability
measures $P$ and $\tilde P$ on $\R^d$  is given by
$$W_r(P,\tilde P)=\inf _{\pi \in \Pi(P,\tilde P)}\left(\int_{\R^d \times \R^d} \|x-y\|^r \pi (dx,dy)\right)^{\frac
1r},$$
where $\Pi(P,\tilde P)$ is the set of probability measures on ${ \R^d} \times {\R^d}$ with marginal distributions $P$ and $\tilde P$, see for instance \cite{RR98} or \cite{Vil08}. 

The stability of the DTM with respect to the Wasserstein distance  $W_r$ is given by the following theorem. 
 \begin{Theo} [\cite{chazal2011geometric}] \label{Theo:StabWp}  
 Let $P$ and $\tilde P$ be two probability measures on $\R^d$. For any $r\geq 1$ and any  $m \in (0,1)$ we have
\begin{equation*} \label{eq:DTMW}
 \|d_{P,m,r} - d_{\tilde P,m,r} \|_\infty \leq  m^{- \frac 1 r} W_r (P,\tilde P).
\end{equation*}
\end{Theo}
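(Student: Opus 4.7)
\textbf{Plan for the proof of Theorem \ref{Theo:StabWp}.} My approach will use the variational representation of the DTM together with an optimal transport coupling. Recall (as used implicitly in the excerpt and established in the original DTM paper) that for any $x\in\R^d$,
\[
m\, d_{P,m,r}^r(x) \;=\; \inf\Bigl\{\int_{\R^d}\|x-y\|^r\,d\mu(y)\;:\;\mu\leq P,\; \mu(\R^d)=m\Bigr\},
\]
where $\mu\leq P$ means $\mu(A)\leq P(A)$ for every Borel set $A$. This reformulation turns the DTM into a minimization over ``sub-measures'' of $P$ of mass $m$, which is exactly the object one can transport naturally through a coupling. I would begin by recording this representation (it follows from the quantile formula \eqref{eq:DTMQuantile} by noting that picking the smallest values of $\|x-\cdot\|^r$ up to mass $m$ realises the infimum).

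Next, fix $x\in\R^d$. Let $\pi\in\Pi(P,\tilde P)$ be an optimal coupling for $W_r$, and let $\tilde\mu\leq\tilde P$ with $\tilde\mu(\R^d)=m$ be optimal (or near-optimal) for $d_{\tilde P,m,r}^r(x)$. Write $\tilde\mu=f\,\tilde P$ with $0\leq f\leq 1$ and $\int f\,d\tilde P=m$. Then I define the transported sub-measure by
\[
\pi'(dy,dz):=f(z)\,\pi(dy,dz),\qquad \mu(dy):=\int_{\R^d} f(z)\,\pi(dy,dz).
\]
A direct check gives $\mu\leq P$ (since the first marginal of $\pi$ is $P$ and $f\leq 1$), $\mu(\R^d)=m$, and the second marginal of $\pi'$ is $\tilde\mu$. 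So $\mu$ is admissible in the variational formula above.

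The final step is a Minkowski inequality in $L^r(\pi')$. Feeding $\mu$ into the variational characterisation and applying the triangle inequality $\|x-y\|\leq\|x-z\|+\|y-z\|$ pointwise, then taking $L^r(\pi')$ norms, I obtain
\begin{align*}
\bigl(m\, d_{P,m,r}^r(x)\bigr)^{1/r}
&\leq \Bigl(\int\|x-y\|^r\,d\pi'(y,z)\Bigr)^{1/r}\\
&\leq \Bigl(\int\|x-z\|^r\,d\pi'(y,z)\Bigr)^{1/r}+\Bigl(\int\|y-z\|^r\,d\pi'(y,z)\Bigr)^{1/r}\\
&=\bigl(m\, d_{\tilde P,m,r}^r(x)\bigr)^{1/r}+\Bigl(\int\|y-z\|^r f(z)\,d\pi(y,z)\Bigr)^{1/r}\\
&\leq m^{1/r}\, d_{\tilde P,m,r}(x)+W_r(P,\tilde P),
\end{align*}
where the last inequality uses $f\leq 1$ and optimality of $\pi$. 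Dividing by $m^{1/r}$ gives $d_{P,m,r}(x)-d_{\tilde P,m,r}(x)\leq m^{-1/r}W_r(P,\tilde P)$, and symmetry in $(P,\tilde P)$ closes the argument uniformly in $x$.

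The main technical obstacle is the first step, that is, the ``$\inf$ over sub-measures'' representation of $m\,d_{P,m,r}^r$. For $r=2$ it is the original definition of Chazal-Cohen-Steiner-M\'erigot; for general $r\geq 1$ it requires a short argument showing that the infimum is attained by the sub-measure concentrated on the ball-of-mass-$m$ around $x$, which is equivalent to saying that the integral $\int_0^m F_{x,r}^{-1}(u)\,du$ in \eqref{eq:DTMQuantile} really is the minimal transport cost of mass $m$ from $P$ to the point $x$ with ground cost $\|\cdot\|^r$. Once this is in hand the rest is essentially bookkeeping; the only other subtlety to check carefully is that the case $W_r(P,\tilde P)=\infty$ is trivial, and that $\tilde\mu$ exists as an honest minimiser (or, failing that, that one may take a minimising sequence and pass to the limit, which is routine by weak compactness on balls of $P$-mass $m$).
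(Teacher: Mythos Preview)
Your argument is correct and is precisely the method of the cited reference \cite{chazal2011geometric} --- variational sub-measure formula for $m\,d_{P,m,r}^r$, transport of an optimal sub-measure through an optimal $W_r$-coupling, and Minkowski in $L^r(\pi')$ --- which is exactly what the paper invokes rather than reproving: it gives no proof of its own here beyond the remark that the $r=2$ argument extends verbatim to all $r\ge 1$. (The appendix subsection headed ``Proof of Proposition~\ref{Theo:StabWp}'' is a labelling slip; its content is in fact the proof of Proposition~\ref{DTM-stabW1}.)
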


Notice that ~\cite{chazal2011geometric} prove this theorem for $r=2$, but the proof for any $r \geq 1$ is exactly the same. 

We now give the pointwise stability of the DTM with respect to the Kantorovich distance $W_1$  between push forward measures on $\R$. This result  easily derives from  the expression \eqref{eq:DTMQuantile} of the DTM given in Introduction, a rigorous proof is given in Appendix\ref{subsec:proofStab}.
\begin{Prop} \label{DTM-stabW1}
For some point $x$  in $\R^d$ and some real number $r \geq 1$, let $d F_{x,r}$ and $d \tilde F_{x,r}$ be the push-forward measures by the function $y \mapsto \|x -y\| ^r$ of two probability measures $P$ and $\tilde P$ defined on $\R^d$. Then, for any $x \in \R^d$:
\begin{equation*} \label{eq:DTMW1}
 \left|d_{P,m,r} ^r (x)  - d_{\tilde P,m,r}^r (x) \right|  \leq  \frac 1 m  W_1 (d F_{x,r} , d \tilde F_{x,r}) \, .
\end{equation*}
\end{Prop}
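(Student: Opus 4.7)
The plan is to reduce the inequality to the one-dimensional case by exploiting the quantile representation \eqref{eq:DTMQuantile} of the DTM. The key fact I will use is the classical identity that for any two probability measures $\mu,\nu$ on $\R$ with quantile functions $Q_\mu,Q_\nu$,
\begin{equation*}
W_1(\mu,\nu) = \int_0^1 |Q_\mu(u) - Q_\nu(u)|\,du ,
\end{equation*}
which is just the statement that the optimal transport coupling on the real line is the monotone rearrangement (and, in particular, the quantile coupling is optimal).

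First I would establish (or invoke, since the appendix provides it) the identity
\begin{equation*}
d^r_{P,m,r}(x) = \frac{1}{m}\int_0^m F_{x,r}^{-1}(u)\,du,
\end{equation*}
applied to both $P$ and $\tilde P$. Then, by linearity and the triangle inequality for integrals,
\begin{equation*}
\left|d^r_{P,m,r}(x) - d^r_{\tilde P,m,r}(x)\right| = \frac{1}{m}\left|\int_0^m \bigl(F_{x,r}^{-1}(u) - \tilde F_{x,r}^{-1}(u)\bigr)\,du\right| \leq \frac{1}{m}\int_0^m \left|F_{x,r}^{-1}(u) - \tilde F_{x,r}^{-1}(u)\right|\,du .
\end{equation*}

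Finally, since the integrand is non negative, extending the integral from $[0,m]$ to $[0,1]$ only enlarges it, and the resulting integral is exactly $W_1(dF_{x,r},d\tilde F_{x,r})$ by the one-dimensional formula recalled above, which gives the claimed bound.

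I do not expect a serious obstacle here: the only mildly delicate point is the rigorous justification of the quantile representation of the DTM (handled in Appendix~\ref{subsec:proofStab}), which relies on Fubini together with the layer-cake / change-of-variables identity $\int_0^m \delta_{P,u}^r(x)\,du = \int_0^m F_{x,r}^{-1}(u)\,du$, itself a consequence of the fact that $\delta_{P,u}(x)^r$ is precisely the $u$-quantile of the pushforward of $P$ by $y\mapsto\|x-y\|^r$. Once this identity is in hand, the proposition is a one-line consequence of the $W_1$ formula on the real line.
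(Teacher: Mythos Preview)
Your proof is correct and follows essentially the same route as the paper: use the quantile representation \eqref{eq:DTMQuantile}, apply the triangle inequality to get $\frac{1}{m}\int_0^m |F_{x,r}^{-1}(u)-\tilde F_{x,r}^{-1}(u)|\,du$, and then bound this by $\frac{1}{m}W_1(dF_{x,r},d\tilde F_{x,r})$ via the one-dimensional formula $W_1(dF,d\tilde F)=\int_0^1 |F^{-1}(u)-\tilde F^{-1}(u)|\,du$. The paper's appendix does exactly this (citing Vallender for the $W_1$ quantile formula), so there is nothing to add.
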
 

Convergence results for $\Delta_{n,m,r}$ can be directly derived from the stability results given in
Theorem~\ref{Theo:StabWp} and Proposition~\ref{DTM-stabW1}. For instance, it can be easily checked  that, for any $x\in \R^d$, $W_1 (d
F_{x,r} , d F_{n,x})$ tends to zero almost surely  \citep[see for instance the Introduction Section of][]{BGM99}. This
together
with Proposition~\ref{DTM-stabW1} gives the almost surely pointwise convergence to zero of $\Delta_{n,m,r}(x)$.

Regarding the convergence in expectation, using Theorem~\ref{Theo:StabWp} in $\R^d$ for $d > r/2$, we deduce from \cite{FG} or from \cite{DSS13} that  
\begin{eqnarray*}
\E  \| \Delta_{n,\frac kn,r} \|_\infty   &\leq&  \left( \frac kn\right)  ^{-1/r} \E  W_r(P,P_n)   \\ 
& \leq  &   \left( \frac kn\right) ^{-1/r} \left[   \E W_r ^r (P,P_n) \right]^{1/r} \\
& \leq  &  C  \left( \frac kn\right)  ^{-1/r} n ^{-1/d}.
\end{eqnarray*}
Nevertheless this upper bound is not sharp: assume that $k_n:= m n$ for some fixed constant $m \in (0,1)$ then the rate is of the order of $n ^{-1/d}$. We show below that the parametric rate $1/ \sqrt n$ can be obtained by considering the alternative stability result given in Proposition~\ref{DTM-stabW1}. In the one-dimensional case,  a direct application of Fubini's theorem gives that \citep[see for instance Theorem 3.2 in][]{bobkov2014one} 
\begin{equation}
\label{eq:majorKantor}
  \sqrt n  \E \left[ W_1(dF_{x,r} , dF_{x,r,n} ) \right]  \leq   \int_0^{\infty} \sqrt{ F_{x,r}(t) (1 - F_{x,r}(t))  }  dt  \:  =: J_1(dF_{x,r} ) ,
  \end{equation}
where $dF_{x,r}$ and $dF_{x,r,n}$ are the push forward  probability measures of $P$ and $P_n$ by the function $\|x-\cdot \| ^r$.  Note that \cite{bobkov2014one} have completely characterized the  convergence of $ \E W_1( \mu , \mu_n )$ in the one-dimensional case, in term  of $ J_1( \mu )$ for $\mu$ a probability measure on the real line and   $\mu_n$  its empirical counterpart.
From Proposition~\ref{DTM-stabW1} and the upper bound~\eqref{eq:majorKantor} we derive that
\begin{equation}
\label{upperW1}
\E  \left| \Delta_{n,\frac kn,r} (x)   \right|   \leq       \frac n k  \frac { J_1(dF_{x,r} )} {\sqrt n} .
\end{equation}
The integral $ J_1(dF_{x,r} )$ is finite if  $\E \| X-x\| ^{2r + \delta} < \infty $ for some $\delta >0$.  We thus obtain a pointwise rate of convergence of $1/\sqrt n$ under reasonable moment conditions, if we take $k_n:= m n$ for some fixed constant $m \in (0,1)$. However, the upper bound \eqref{upperW1} does not allow us to describe correctly how the rate  depends on the parameter $m= \frac kn$.  For instance, if $\frac kn$ is very small, the bound blows up in all cases while it  should not be the case for instance with discrete measures. The reason is that the stability results are too global to provide a sharp expectation bound for small values of $\frac kn$.

\section{Proofs} \label{app:proofs}

\subsection{Preliminary results for the DTM} \label{subsec:proofStab}

\subsubsection*{Rewritting the DTM in terms of quantile function}

Let $P$ a probability distribution in $\R^d$, $x \in \R^d$ and $r \geq 1$. Let $F_{x,r} $ be the distribution function of the random variable $\|x-X \| ^r$, where the distribution of the random variable $X$ is $P$. The preliminary distance function to $P$ 
\begin{equation*}
\delta_{P, u} : x \in \R^d  \mapsto \inf\{ t > 0 \, ; \,  P(\bar B (x,t) ) \geq  u  \} 
\end{equation*}
can be rewritten in terms of the
quantile function $F_{x,r}$:
\begin{Lemma} 
\label{Lem:DTMquantile}
 For any  $ u \in (0,1)$, we have  $ \delta^r_{P,u}(x)=  F_{x,r} ^{-1}(u)$. In particular, $ \delta _{P,u}(x)=  F_{x,1}
^{-1}(u)$.
\end{Lemma}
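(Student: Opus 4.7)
The plan is to prove the identity $\delta_{P,u}^r(x) = F_{x,r}^{-1}(u)$ by a direct change of variables, showing that the two infima defining these quantities are over equivalent sets of values.

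First I would rewrite the distribution function $F_{x,r}$ in a convenient form. By definition, $F_{x,r}(t) = \mathbb{P}(\|x-X\|^r \leq t)$ where $X \sim P$. For $t < 0$ this is zero, and for $t \geq 0$ we can take the $r$-th root inside the probability to get $F_{x,r}(t) = \mathbb{P}(\|x-X\| \leq t^{1/r}) = P(\bar B(x, t^{1/r}))$. This is the key link between the distribution of the powered distance and the mass of closed balls around $x$.

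Next I would unwind the definition of $F_{x,r}^{-1}(u)$. Since $F_{x,r}(t)=0$ for $t<0$ and $u>0$, we may restrict the infimum to $t \geq 0$, giving
\begin{equation*}
F_{x,r}^{-1}(u) = \inf\bigl\{t \geq 0 \,:\, P\bigl(\bar B(x, t^{1/r})\bigr) \geq u\bigr\}.
\end{equation*}
The substitution $s = t^{1/r}$ (equivalently $t = s^r$) is a bijection on $[0,\infty)$, and since $s \mapsto s^r$ is continuous and strictly increasing for $r \geq 1$, it commutes with the infimum:
\begin{equation*}
F_{x,r}^{-1}(u) = \bigl(\inf\{s \geq 0 \,:\, P(\bar B(x,s)) \geq u\}\bigr)^r.
\end{equation*}

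Finally I would reconcile this with $\delta_{P,u}(x) = \inf\{s > 0 \,:\, P(\bar B(x,s)) \geq u\}$, which differs only by excluding $s=0$. The two infima agree: if the set $\{s > 0 : P(\bar B(x,s)) \geq u\}$ contains arbitrarily small positive values, both infima equal $0$; and including $s = 0$ can only make the infimum smaller when $P(\{x\}) \geq u$, but in that case there also exist arbitrarily small $s > 0$ with $P(\bar B(x,s)) \geq P(\{x\}) \geq u$, so again both infima equal $0$. Raising to the power $r$ then yields $\delta_{P,u}^r(x) = F_{x,r}^{-1}(u)$, and the special case $r=1$ gives $\delta_{P,u}(x) = F_{x,1}^{-1}(u)$. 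The only subtlety is checking the boundary behavior at $s=0$, which is the step that genuinely needs care; the rest is a mechanical change of variables.
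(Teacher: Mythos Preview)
Your proof is correct and follows essentially the same approach as the paper: rewrite $F_{x,r}(t)=P(\bar B(x,t^{1/r}))$, perform the change of variable $t=s^r$, and invoke continuity of $s\mapsto s^r$ to pull the infimum through. You are in fact slightly more careful than the paper in handling the distinction between $s>0$ and $s\geq 0$ in the definition of $\delta_{P,u}$, which the paper's proof glosses over.
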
 
\begin{proof}
Note that  for any $t \in \R^+$, $F_{x,r} (t)  =  P ( B(x,{t}^{1/r}))$. Next, 
$$ \{ t \geq  0 \, ; \,    P ( B(x,{t}^{1/r})) \geq   \ell   \}   =  \{ s^r \, ; s \geq  0 \, , \,   P ( B(x,s)) \geq  
\ell  \} $$
and we deduce that
\begin{eqnarray*}
F_{x,r} ^{-1}(\ell)  & = &  \inf\{ s^r \, ; s \geq  0 \, , \,   P ( B(x,s))  \geq   \ell   \} \\
& =  &  \delta^r_{P,\ell}(x).
\end{eqnarray*}
where we have used the continuity of $s \mapsto s^r$ for the last equality.
\end{proof}
From Lemma~\ref{Lem:DTMquantile} we directly derive the expression of the DTM in terms of the quantile function $F_{x,r}^{-1}$, as given by Equation~\eqref{eq:DTMQuantile} in the Introduction Section:
\begin{equation*}      
 d^r_{P,m,r}(x) = \frac 1m \int_0^m    F_{x,r}^{-1}(u) d u .
\end{equation*}

\subsubsection*{Proof of Proposition~\ref{Theo:StabWp}.}

Let  $F$ and $\tilde F$ be the cdfs of two probability measures $dF$ and $d \tilde F$ on $\R$. Recall that, for any $r\geq 1$, and any measure $\mu$ and $\tilde \mu$ in $\R$:
\begin{equation} \label{WpFinv}
W_r^r(dF , d\tilde F)=\int_0^1|\tilde F^{-1}(u)-F^{-1}(u)|^r du \, ,
\end{equation}
see for instance see for instance \cite{vallender1974calculation}. Thus
\begin{eqnarray*}      
\int_0^m  | \tilde F^{-1}(u)  - F^{-1}(u)  | du \leq W_1^r(F , \tilde F)
\end{eqnarray*} 
and the proof follows using Equation \eqref{eq:DTMQuantile}.

\subsubsection*{A decomposition of $\Delta_{n,\frac kn,r}$.}

For any $x\in \R^d$, any $r\geq 1$ we have $ F_{x,n,r} ^{-1}(0) \geq F_{x,r}^{-1}(0) \geq 0 $
since  $F_{x,r}$ is the cdf of the random distance $\| x-X\|^r$ whose support is
 included in $\R^+$. From Equation~\eqref{eq:DeltaW1} and geometric
considerations (see figure~\ref{fig:LemmeJerome}) we can rewrite $\Delta_{n,m,r}$ as given in the following Lemma.
\begin{Lemma} \label{DecomposDeltaJerome} The quantity $\Delta_{n,\frac kn,r}$ can be rewritten as follows:
\begin{eqnarray*} 
\Delta_{n,\frac kn,r}(x)  &:=& 
 \frac n k \int_{0}^{\frac kn} \left\{      F_{x,n,r}^{-1}(u)  - F_{x,r}^{-1}(u) \right\} du \\
&=&  \frac n k \int_{F_{x,r}^{-1}(0)}^{F_{x,r}^{-1}(\frac k n ) \vee F_{x,n,r}^{-1}(\frac k n )} 
\left\{  F_{x,r}(t) \wedge \frac k n -  F_{x,n,r}(t) \wedge \frac k n \right\} dt.
\end{eqnarray*}
\end{Lemma}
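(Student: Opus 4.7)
The starting point is the standard Fubini/layer-cake identity relating a quantile integral to a cdf integral: for a non-negative random variable with cdf $F$ and any $m \in (0,1)$,
\begin{equation*}
\int_0^m F^{-1}(u)\, du = \int_0^{\infty} \left\{ m - F(t) \wedge m \right\} dt.
\end{equation*}
I would establish this by writing $F^{-1}(u) = \int_0^\infty \mathbf{1}_{F^{-1}(u) > t}\, dt$, using the Galois duality $F^{-1}(u) > t \Leftrightarrow u > F(t)$ (valid for almost every $(u,t)$), and swapping the order of integration: the inner $\int_0^m \mathbf{1}_{u > F(t)}\, du$ equals $m - F(t)$ when $F(t) < m$ and $0$ otherwise, i.e.\ $m - F(t) \wedge m$.

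Next, I would apply the identity both to $F_{x,r}$ and to $F_{x,n,r}$ with $m = k/n$, and subtract. Using the definition \eqref{eq:DeltaW1},
\begin{equation*}
\Delta_{n,\frac{k}{n},r}(x) = \frac{n}{k}\int_0^{k/n}\left\{F_{x,n,r}^{-1}(u) - F_{x,r}^{-1}(u)\right\} du = \frac{n}{k}\int_0^{\infty}\left\{ F_{x,r}(t) \wedge \tfrac{k}{n} - F_{x,n,r}(t) \wedge \tfrac{k}{n}\right\} dt,
\end{equation*}
since the $(k/n)$-constant terms cancel. It then remains to reduce the range of integration. For the lower endpoint: $F_{x,r}(t) = 0$ for every $t < F_{x,r}^{-1}(0)$; and since $F_{x,n,r}^{-1}(0) \geq F_{x,r}^{-1}(0)$ (as noted in the excerpt), we likewise have $F_{x,n,r}(t) = 0$ on that range. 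So the integrand vanishes on $[0, F_{x,r}^{-1}(0))$. For the upper endpoint: by right-continuity of a cdf, $F_{x,r}(t) \geq k/n$ for every $t \geq F_{x,r}^{-1}(k/n)$, and analogously for $F_{x,n,r}$; hence both truncations saturate at $k/n$ as soon as $t \geq F_{x,r}^{-1}(k/n) \vee F_{x,n,r}^{-1}(k/n)$, and the integrand is zero beyond that. Combining these two observations gives exactly the stated formula.

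\textbf{Where care is needed.} The one delicate point is the $(u,t)$-duality $\{u > F(t)\} \leftrightarrow \{t < F^{-1}(u)\}$, which can fail on the (Lebesgue-negligible) set of jumps of $F$ or flat parts of $F^{-1}$; this is harmless under the Lebesgue integral but must be acknowledged. Everything else is bookkeeping: the cancellation of the constant $k/n$ contributions from the two layer-cake representations, and the monotone argument for trimming the integration range. The figure mentioned in the excerpt (Figure~\ref{fig:LemmeJerome}) provides exactly the geometric picture behind this argument: the signed area between the two quantile curves on $[0,k/n]$ coincides, by reflection across the diagonal, with the signed area between the two truncated cdf curves on $[F_{x,r}^{-1}(0), F_{x,r}^{-1}(k/n) \vee F_{x,n,r}^{-1}(k/n)]$.
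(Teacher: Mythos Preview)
Your proof is correct and is essentially the analytic formalization of the paper's own argument, which consists only of the sentence ``From Equation~\eqref{eq:DeltaW1} and geometric considerations (see figure~\ref{fig:LemmeJerome})'' together with a picture showing the same region integrated horizontally (quantile side) and vertically (cdf side). Your layer-cake/Fubini identity and the subsequent trimming of the integration range are precisely what that figure encodes.
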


\begin{figure}[H]
\begin{center}
	\includegraphics[scale=0.8]{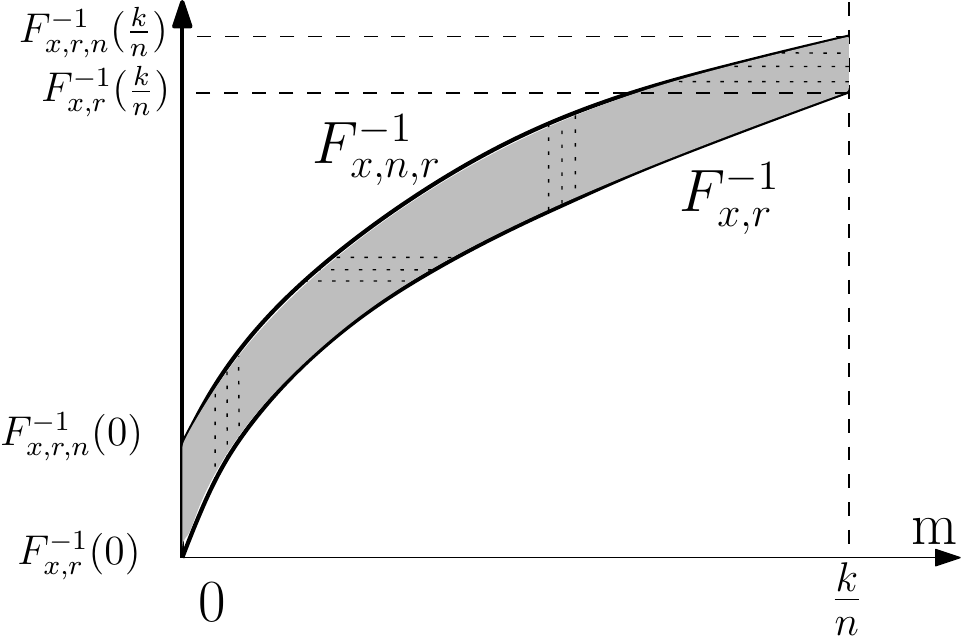}
\caption{Calculation of $ \Delta_{n,\frac kn,r}(x)$ by integrating the grey domain horizontally or vertically.}
\label{fig:LemmeJerome}
\end{center}
\end{figure}

\subsection{Proof of Theorem~\ref{Theo:ExpoDTMBounded}} 

We recall that we use the notation $F$  for $F_{x,r} $ and $F_n$ for  $F_{x,n,r} $ in the
proof.  

\subsubsection*{Upper bound on the fluctuations of $\Delta_{n,\frac kn}(x)$}

We first check that  $P\left(  | \Delta_{n,\frac kn}(x) | \geq \lambda \right) = 0$ for $\lambda \geq \omega_{x}(1)$.
Note that $\omega_{x}(1) < \infty$ because the support of $P$ is compact. Let $\mathbb G_n$ and $\mathbb G^{-1}_n$ be
the empirical uniform distribution function and the empirical uniform quantile function (see Appendix~\ref{sec:Gn}).
Starting from the definition \eqref{eq:DeltaW1} of the DTM and using Proposition~\ref{Prop:ReducLaw} in Appendix~\ref{sec:Gn}, we obtain that for $ \lambda \geq 0 $ and $k  \leq n$:
\begin{eqnarray*}  
P\left(  | \Delta_{n,\frac kn}(x) | \geq \lambda \right)   &\leq & P \left(   \sup_{u \in
[0,\frac k n] }  \left|  F^{-1} \left(\mathbb G^{-1}_n (u)   \right)   -  
F ^{-1}(u)\right|     \geq \lambda \right)   \\
&\leq & P \left(   \sup_{u \in [0,\frac k n] }   \omega_{x}  \left( \left|  \mathbb G^{-1}_n (u)    
  -   u \right|  \right)    \geq  \lambda  \right) \\
&\leq & P \left(   \sup_{u \in [0,\frac k n] }  \left|  \mathbb G^{-1}_n (u)    
  -   u \right|     \geq  \omega_{x}^{-1}(\lambda) \right),
\end{eqnarray*}
and this probability is obviously zero for any $\lambda \geq \omega_{x}(1)$. 

We now prove the deviation bounds starting from Lemma~\ref{DecomposDeltaJerome}. If
$F^{-1}(\frac k n )  \leq F_n^{-1}(\frac k n )$, then 
\begin{equation*}
\Delta_{n,\frac kn}(x)  =  \frac n k \int_{F^{-1}(0)}^{F^{-1}(\frac k n)} \left\{
F (t)   -  F_n(t)   \right\} dt 
+ \frac n k \int_{F^{-1}(\frac k n)}^{F_n^{-1}(\frac k n)} \left\{ \frac k
n   -  F_n(t)   \right\} dt  
\end{equation*}
and thus 
\begin{eqnarray}
\left|\Delta_{n,\frac kn}(x)\right|& \leq &   
 \underbrace{
 \frac n k
\int_{F^{-1}(0)}^{F^{-1}(\frac k n)}  \left|F (t)   -  F_n(t) \right| dt
}_{A}
+  
\underbrace{
\frac n k   \int_{F^{-1}(\frac k n)}^{F_n^{-1}(\frac k n)} \left\{ \frac kn   -  F_n(t)   \right\} dt   \:  \indic{F_n^{-1}\left(\frac k n\right) \geq   F^{-1} \left(\frac k n \right) } 
}_{B} 
\label{decompExpoBound}
\end{eqnarray}
If $F^{-1}(\frac k n )  > F_n^{-1}(\frac k n )$, then
\begin{equation*}
\Delta_{n,\frac kn}(x)  =  \frac n k \int_{F^{-1}(0)}^{F_n^{-1}(\frac k n)} \left\{
F (t)   -  F_n(t)   \right\} dt 
+ \frac n k \int_{F_n^{-1}(\frac k n)}^{F^{-1}(\frac k n)} \left\{  F(t) - \frac
k
n     \right\} dt
\end{equation*}
and thus 
\begin{eqnarray*}
\left|\Delta_{n,\frac kn}(x)\right| &\leq & \frac n k
\int_{F^{-1}(0)}^{F_n^{-1}(\frac k n)}  \left|F (t)   -  F_n(t) \right| dt 
+ \frac n k \int_{F_n^{-1}(\frac k n)}^{F^{-1}(\frac k n)} \left\{  \frac
k n  -  F(t)   \right\} dt   \\
 &\leq & \frac n k \int_{F^{-1}(0)}^{F_n^{-1}(\frac k n)}  \left|F (t)   - 
F_n(t) \right| dt 
+ \frac n k \int_{F_n^{-1}(\frac k n)}^{F^{-1}(\frac k n)}  \left\{  F_n(t) - F(t) \right\} dt  \\
&\leq & \frac n k \int_{F^{-1}(0)}^{F^{-1}(\frac k n)}  \left|F (t)   - 
F_n(t) \right| dt .
\end{eqnarray*}
In all cases, Inequality (\ref{decompExpoBound})  is thus satisfied.

\paragraph{$\bullet$ Local analysis : deviation bound of $\Delta_{n,\frac kn}(x)$ for $\frac kn $ close to zero.}
We now prove the deviation bound for $\frac kn  < \frac 12$. We  first upper bound the term A in
(\ref{decompExpoBound}).
According to
Proposition~\ref{prop:devEmpProcunif} in Appendix~\ref{sec:Gn}, for any $u_0 \in (0,\frac 1 2)$
and any $\lambda >0$:
\begin{equation} \label{Gndev}
P \left( \sup_{u \in [0,u_0]}  \left|\mathbb G_n (u)  - u  
 \right| \geq  \lambda  \right) \leq 2 \exp \left(  - \frac{n \lambda^2
(1-u_0)^2}{2u_0} \frac 1 {1 + \frac{(1-u_0) \lambda} {3u_0}} \right).
\end{equation}
For $u_0 \leq    \frac 12  $  and $\lambda >0$ it yields
\begin{eqnarray*}
P \left( \sup_{t \in \left[ F^{-1}(0), F^{-1}(u_0) \right)}  \left| F_n(t) -  F(t)  \right| \geq  \lambda  \right)
& =& P \left( \sup_{u \in [ 0,  u_0 )}  \left| \mathbb G_n (u) -  u  \right| \geq  \lambda  \right) \\
  & \leq & 2 \exp \left(  - \frac{n \lambda^2 (1-u_0)^2}{2u_0} \frac 1 {1 + \frac{(1-u_0) \lambda} {3u_0}} \right)
\\ 
& \leq & 2 \exp \left(  - \frac{n \lambda^2 (1-u_0)^2}{4u_0} \right)  + 2 \exp
\left(  - \frac{3 n \lambda (1-u_0)}{4}\right) ,
\end{eqnarray*}
where we have used Proposition~\ref{Prop:ReducLaw} in Appendix~\ref{sec:Gn} for the first equality,  \eqref{Gndev}   for the second inequality, and that for any $u,v >0$, $\exp(- u/(1 +v) \leq \exp(- u/2) + \exp(-u/(2v))$.
The term $A$ can be upper bounded by controlling the supremum of $\left| F_n   -  F  \right|$ over  $  \left[ F^{-1}(0)
,F^{-1} \left( \frac kn\right)\right)$.
If $ \frac k n  < \frac 12 $, it yields
\begin{multline}
P \left( A \geq  \lambda  \right)  
 \leq 
P \left(  
\frac n k \left[ F^{-1}\left(\frac k n\right) - F^{-1}(0) \right]  
\sup_{t \in  \left[F^{-1}(0), F^{-1}\left( \frac kn \right) \right) }  \left|
F_n(t) -  F(t)  \right| \geq  \lambda  \right)    \\
\leq  
2 \exp \left( - \frac{n  \lambda^2 (1-\frac k n)^2}{4  \frac k n}  \left[
\frac{\frac k n}{ 
F^{-1}\left(\frac k n\right) - F^{-1}(0)} \right]^{2}    \right) + 2 \exp \left(
 - \frac {3 n\lambda  (1- \frac k n )}  4   \left[
\frac{\frac k n}{ F^{-1}\left(\frac k n\right) - F^{-1}(0)} \right]\right)  
\\
\leq 2 \exp \left( - \frac n{16} \frac kn  \left[ \frac {\lambda} {F^{-1}\left(\frac k n\right) - F^{-1}(0)} 
\right]^{2} 
\right) 
+ 2 \exp \left(  - \frac {3 n}8 \frac kn   \frac{\lambda }  { F^{-1}\left(\frac k n\right) - F^{-1}(0)} 
\right) \label{InegA}
\end{multline}

\medskip

We now upper bound $B$.  We have 
\begin{equation}
\label{ub:restart}
 B    \leq       \frac n k      \left[ F_n^{-1}\left(\frac k n\right)  - F^{-1}\left(\frac k n\right) 
\right]  
    \left[ \frac k n   -   F_n \left( F^{-1}\left(\frac k n \right)   \right)   \right] 
 \: \indic{F_n^{-1}\left(\frac k n\right) \geq   F^{-1} \left(\frac k n \right) } .
\end{equation}
Thus, according to Proposition~\ref{Prop:ReducLaw} in Appendix~\ref{sec:Gn},
\begin{eqnarray*}
P( B \geq \lambda)  &\leq & P\left(  \frac n k    \left\{     \left[ F^{-1}  \left( \mathbb G_n^{-1} 
\left(\frac k n \right) \right) - F^{-1}\left(\frac k n\right)  \right] \right\}  \, 
 \left\{        \left[ \frac k n   -   \mathbb G_n \left(\frac k n \right)   \right] 
\right\} 
 \: \indic{\mathbb G_n^{-1} \left(\frac k n\right) \geq    \frac k n   }  \geq \lambda \right) \\
 &\leq  &  P( B_0   \geq \lambda)
\end{eqnarray*}
where 
\begin{equation*}
B_0 := \left\{  \sqrt{ \frac n k }     \omega_x\left( \, \mathbb G_n^{-1}  \left(\frac k n \right) -
\frac k n
\right)  \right\}  \, 
 \left\{  \sqrt{ \frac n k }     \left[ \frac k n   -   \mathbb G_n \left(\frac k n \right)   \right] 
\right\} .
 \end{equation*}
Let $\theta \in (0,1)$ to be chosen
further. We have
\begin{eqnarray*}
 2 B_0  \  & \leq &
  \underbrace{  \left\{    \theta   \sqrt{ \frac n k }      \omega_x\left( \, \mathbb G_n^{-1} 
\left(\frac k n \right) - \frac k n
\right)  \right\} ^2   }_{B_1^2}
\,  +  \, 
 \underbrace{ \left\{\frac 1   \theta  \sqrt{ \frac n k }    \left[ \frac k n   -   \mathbb G_n
\left(\frac k n \right)   \right]  \right\} ^2}_{B_2^2} \\
\end{eqnarray*}
Then we can write
\begin{eqnarray}
 P (B \geq \lambda) & \leq & P \left( B_1 \geq \sqrt \lambda  \right) + P \left( B_2 \geq \sqrt \lambda  \right)  \notag
\\
&\leq &   
 P \left(   \left|  \mathbb G_n^{-1} \left(\frac k n \right)  - \frac k n  \right| \geq \omega_x^{-1} \left(  \frac{
\sqrt \lambda}{\theta}   \sqrt{ \frac kn}  \right)  \right) 
+ P \left(   \left|  \mathbb G_n  \left( \frac k n \right)  - \frac k n  \right| \geq   \theta \sqrt \lambda    \sqrt{
\frac kn}  \right)  \notag .
\end{eqnarray}
Thus,
\begin{eqnarray}
 P (B \geq \lambda) &\leq & 2 \exp \left(  - \frac{   n  \left\{ \omega_x^{-1} \left(  \frac {\sqrt \lambda}{\theta}  
 \sqrt{ \frac kn} \right) \right\} ^2 }{4 k / n}
\right) + 2   \exp \left(  -  \frac{   3 n   \omega_x^{-1} \left(  \frac {\sqrt \lambda}{\theta}    \sqrt{ \frac kn} 
\right)   }{8}
\right)  \notag  \\
&  & +  2   \exp \left(  -  \frac{ n^2  \theta ^2\left[ \frac k n \right]     \lambda   }{ 4 k}  \right)  
+  2   \exp \left(  -  \frac {3 \theta n}4     \sqrt{ \frac kn}  \sqrt \lambda  \right)  \label{InegB}
\end{eqnarray}
where we have used Propositions ~\ref{prop:devEmpProcpointwise} and \ref{prop:devquantilepointwise}. According to
(\ref{decompExpoBound}), we have $ P \left( \left|\Delta_{n,\frac kn}(x)\right| \geq  \lambda \right) \leq P (A \geq
\frac \lambda 2) + P (B \geq \frac \lambda 2) $. We then obtain the following deviation bound  from  Inequalities
(\ref{InegA}) and (\ref{InegB}) for any $\frac kn < \frac 12$ and any $\lambda >0$:
\begin{multline}
\frac { P\left( | \Delta_{n,\frac kn}(x) | \geq  \lambda \right)}{2}    \leq 
  \exp \left( - \frac{n} {64   \left[ F^{-1}\left(\frac k n\right) - F^{-1}(0) \right]^{2} }\frac kn   \lambda^2
\right)  
+   \exp \left(  - \frac {3 n}{16}  \frac kn \frac{\lambda }   { F^{-1}\left(\frac k n\right) - F^{-1}(0)}  \right)  \\
+ \exp \left(  - \frac{   n^2 }{4 k} \left\{ \omega_x^{-1} \left(  \frac 1 \theta  {\sqrt \frac \lambda 8}  \sqrt{ \frac
kn}  \right) \right\} ^2  \right)  
 +      \exp \left(  -  \frac{   3 n  }{8} \omega_x^{-1} \left(  \frac 1 \theta  {\sqrt \frac \lambda 2}     \sqrt{
\frac kn}\right)   
\right)   \\
 +     \exp \left(  -  \frac{ n   \theta ^2  \lambda  }{8}       \right)  
+     \exp \left(  -  \frac {3 \theta n  }4   \sqrt { \frac \lambda 2}    \sqrt{ \frac kn}   \right)  \label{eq:devzero} ,
\end{multline}
where $\theta$ will be chosen further in the proof.

\paragraph{$\bullet$ Deviation bound of $\Delta_{n,\frac kn}(x)$ for $\frac kn \geq \frac 12$.} For controlling $A$, we
now use the DKW Inequality (see Theorem~\ref{DKW}), it gives that
\begin{eqnarray*}
P \left( A \geq  \lambda  \right)  
& \leq &
P \left(  
\frac n k \left[ F^{-1}\left(\frac k n\right) - F^{-1}(0) \right]  
\sup_{t \in  [0,1] }  \left|
F_n(t) -  F(t)  \right| \geq  \lambda  \right)    \\
&\leq  &
2 \exp \left( - 2  n  \lambda^2  \left[
\frac{\frac k n}{ 
F^{-1}\left(\frac k n\right) - F^{-1}(0)} \right]^{2}    \right) 
\end{eqnarray*}
We decompose $B$ into $B_1$ and $B_2$ as before. We use DKW again for $B_1$ and $B_2$. For the quantile term $B_1$, note
that 
$$ \left\{ \sup_{u \in [0,\frac kn]} \left| \mathbb G_n ^{-1} (u) - u  \right|  > \lambda   \right\}
  \subset \left\{ \sup_{u \in [0,1]} \left| \mathbb G_n ^{-1} (u) - u  \right|  > \lambda \right\} 
   =  \left\{ \sup_{t \in [0,1] } \left| \mathbb G_n (t) - t  \right|  > \lambda \right\}. $$
We find that for any $\tilde \theta>0$ :
\begin{multline}
\frac { P\left( | \Delta_{n,\frac kn}(x) | \geq  \lambda \right)}{2}    \leq   
 \exp \left( - 2  n  \lambda^2  \left[ \frac{\frac k n}{  F^{-1}\left(\frac k n\right) - F^{-1}(0)} \right]^{2}   
\right) 
+ \exp \left(  - 2 n \left\{ \omega_x^{-1} \left(  \frac 1 {\tilde \theta}  {\sqrt \frac \lambda 2} \sqrt{ \frac
 kn}   \right) \right\} ^2  \right)    \\
  +     \exp \left(  -  n   \tilde \theta ^2  \lambda      \frac k n    \right)  \label{deviationNonLocal}  .
\end{multline}
where $\tilde \theta$ will be chosen further in the proof.

%
\subsubsection*{Upper bound on the expectation of $\Delta_{n,\frac kn}(x)$}

\paragraph{$\bullet$ Case $\frac kn \leq \frac 12$.}
By integrating the probability in \eqref{eq:devzero}, we obtain
\begin{multline}
 \label{eq:expectDelta}
\frac{\E\left| | \Delta_{n,\frac kn}(x) \right|} 2  \leq
16 \sqrt {\pi}  \frac 1{\sqrt n} \left(\frac kn \right)^{-\frac 12} \left[ F^{-1}\left(\frac k n\right)
 - F^{-1}(0)\right]
+ \frac{16}{3n}  \left(\frac kn \right)^{-1} \left[ F^{-1}\left(\frac k n\right) - F^{-1}(0) \right]   \\
 +   \underbrace{\int_{ \lambda >0} \exp \left(  - \frac{   n^2 }{4 k} \left\{ \omega_x^{-1} \left(  \frac 1 \theta 
{\sqrt \frac \lambda 2}   \sqrt{ \frac kn}  \right) \right\} ^2  \right)    \, d \lambda }_{I_1}
 +    \underbrace{ \int_{ \lambda >0}   \exp \left(  -  \frac{   3 n  }{8} \omega_x^{-1} \left(  \frac 1{\theta}  \sqrt{
\frac \lambda 2}    \sqrt{ \frac kn} \right) \right)  \, d \lambda    }_{I_2}   \\
 +    \frac {8 }  {\theta^2 n }     
 + \frac {32}9  \frac 1 {  \theta ^2  n^2 }    \sqrt{ \frac nk} 
 \end{multline}
Since $\omega_x(u) /u$ is a non increasing function, we have that $\omega_x^{-1}(t) /t$ is a non decreasing function.
Then, for any positive constants $\lambda_1$ and $\lambda_2$:
\begin{multline*}
I_1 + I_2 \leq \lambda_1  + \int_{ \lambda > \lambda_1} \exp \left(  - \frac{   n^2 }{4 k} \left\{ \frac 1{\lambda_1}
\omega_x^{-1} \left( \frac 1 \theta 
{\sqrt \frac {\lambda_1} 2}   \sqrt{ \frac kn}  \right) \right\} ^2  \lambda^2 \right)    \, d \lambda  \\
  + \lambda_2 +    \int_{ \lambda >\lambda_2}   \exp \left(  -  \frac{   3 n  }{8 \lambda_2} \omega_x^{-1} \left(  \frac
1{\theta} \sqrt{
\frac {\lambda_2} 2}   \sqrt{ \frac kn}  \right) \lambda  \right)  \, d \lambda    \\
= \left[\lambda_1  + \frac{2 \sqrt k }n \frac{\lambda_1}{\omega_x^{-1} \left( \frac 1 \theta {\sqrt \frac {\lambda_1} 2}
 \sqrt{ \frac kn}  \right)} \right] + \left[ \lambda_2 +  \frac{8 \lambda_2}{   3 n    \omega_x^{-1}
\left(  \frac 1{\theta} \sqrt{ \frac {\lambda_2} 2}   \sqrt{ \frac kn}  \right)} \right]
\end{multline*}
We then take $ \lambda_1 =  2 \left\{ \theta    \sqrt{ \frac nk}  \  \omega_x 
  \left( \frac{2\sqrt{k}}{n}  \right)  \right\} ^2$ and $ \lambda_2 = 2 \left\{\theta     \sqrt{ \frac nk}   \omega_x  
\left( \frac{8}{3n}  \right)  \right\} ^2 $ and   we find that
$$ I_1  + I_2 \leq  4   \left\{  \theta    \sqrt{ \frac nk}  \omega_x 
\left(
\frac{2\sqrt{k}}{n}  \right)     \right\}    ^2  +   4   \left\{  \theta     \sqrt{ \frac nk}\omega_x   \left(
\frac{8}{3n}  \right)     \right\}    ^2 . $$
We then choose 
$$\theta ^2  =  \frac 1 {\sqrt n   \omega_x  \left( \frac{2\sqrt{k}}{n}  \right)  } 
 \sqrt{ \frac kn}    $$ 
to balance the terms $I_1$  and  $\frac 8 {\theta^2 n } $ in~\eqref{eq:expectDelta}. The deviation bound
given in the theorem corresponds to this choice for $\theta$. 

Finally,  note that  $ \omega_x  \left( \frac{\sqrt{2k}}{n}  \right) \leq \sqrt 2 \omega_x   \left( \frac{\sqrt{k}}{n}  \right)$ because $\omega_x(u) /u$ is a non increasing function and we obtain that
there exists an absolute constant $C$ such that  
\begin{equation} \label{ExpectationLocal} 
\E\left|  \Delta_{n,\frac kn}(x) \right|  \leq
 \frac C{\sqrt n} \left[ \frac k n \right] ^  {- 1/2}    \left\{  \left[ F^{-1}\left(\frac k n\right)
 - F^{-1}(0)\right]
 +       \omega_x  \left( \frac{\sqrt{k}}{n}  \right)  \right\}. 
\end{equation}

\paragraph{$\bullet$ Case $\frac kn \geq \frac 12$.}
We integrate the deviations \eqref{deviationNonLocal}  and  we obtain that 
\begin{equation} \label{ExpectationNonLocal} 
 \E\left|  \Delta_{n,\frac kn}(x) \right|   \leq C \left[ 
  \frac 1{\sqrt n}  \left[ \frac{  F^{-1}\left(\frac k n\right) - F^{-1}(0)}{\frac k n} \right] 
 + \left\{  \tilde \theta   \sqrt{ \frac nk}  \omega_x  \left( \frac 1{\sqrt{n}} \right)      \right\}    ^2
 +    \frac {1}  {\tilde \theta^2 n }    \frac nk   \right] .
\end{equation}
We then choose 
$$\tilde \theta ^2  =  \frac 1 {\sqrt n   \omega_x  \left( \frac 1{\sqrt{n}} \right)  } .$$ 
The deviation bound given in the Theorem correspond to this choice for $\tilde \theta$. Since  $ \sqrt{\frac nk} \leq \sqrt 2  \leq 2 $, we see that the expectation bound \eqref{ExpectationNonLocal} for this choice of $\tilde \theta$ can
be rewritten as the expectation bound \eqref{ExpectationLocal}. This concludes the proof of
Theorem~\ref{Theo:ExpoDTMBounded}.

\subsection{Proof of Proposition \ref{prop:lowerboundDTM}}

\begin{figure}[H]
\begin{center}
	\includegraphics[scale=0.8]{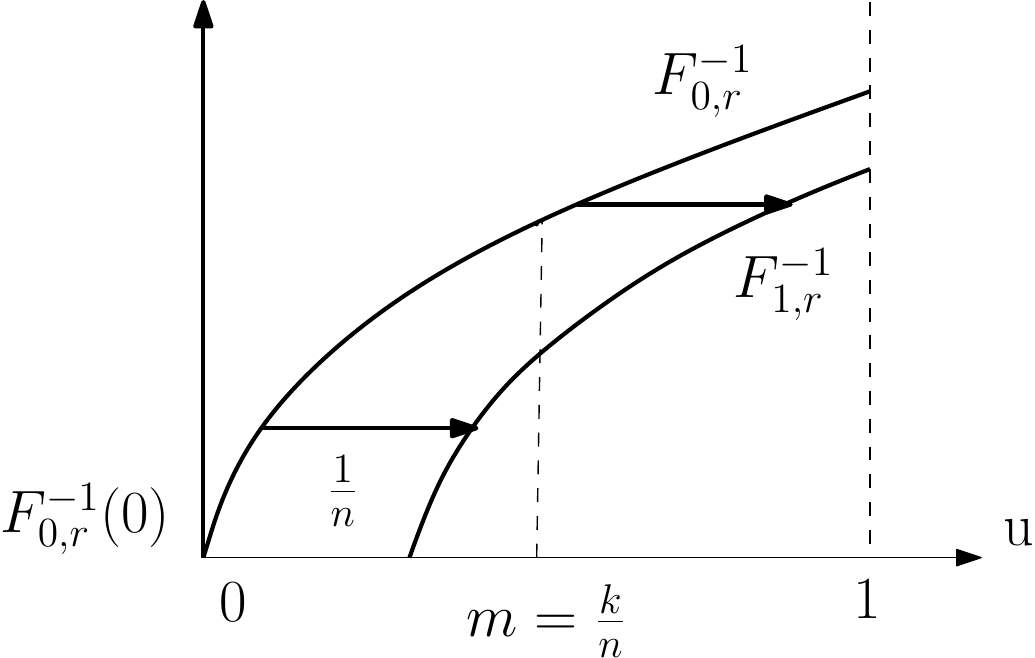}
\caption{The two quantile functions $F_{0}^{-1}$ and $F_{1}^{-1}$.}
\label{fig:quantileLowerbound}
\end{center}
\end{figure}

We first consider the case $d=1$. For applying Le Cam's Lemma (Lemma \ref{Lem:Lecam}), we need to find two probabilities
$P_0$ and $P_1$ which distances
to measure are sufficiently far from each other. 
Without loss of generality we can assume that  $x = 0$. Let  $\bar P  \in \mathcal P _\omega$ which satisfies 
\eqref{OmegaEstUnQuantile}. We can assume that $\bar P$ is supported on $\R^+$ since the push forward measure of $\bar
P$ by the norm is in $ \mathcal P _\omega$ and also satisfies  \eqref{OmegaEstUnQuantile}.
Let $\bar F^{-1}$ be the quantile function of $\bar P$. For some $n \geq 1$, let $P_{0} := \bar P$ and let $P_{1} :=
\frac 1 n \delta_{0} +  \left. \bar P \right|_{[0,\bar
F^{-1}(1- 1/n)]} $, where $\delta_{0}$ is a Dirac distribution at zero and where $ \left. \bar P \right|_{[a,b]}$ is the
restriction of the measure $\bar P$ to the set $[a,b]$. For $i = 0,1$, let  $ P_{i,r}$  be
the push-forward measure of $P_{i}$  by the power function $t \mapsto t^r$ on $\R^+$. Let also
$F_{i,r}$ and $F_{i,r}^{-1}$ be the distribution function and the quantile function  of $ P_{i,r}$, see
Figure~\ref{fig:quantileLowerbound} for an illustration. Note that
 that $ P_{1,r}   = \frac 1 n \delta_{0} + \left.P_{0,r} \right|_{[0,F_{0,r}^{-1}(1-
1/n)]} $. Thus $P_{1}$ is in $\mathcal P _\omega$ because 
$$
F_{1,r}^{-1}(u)  = \left\{  
\begin{array}{ll}
      F_{0,r}^{-1}(0)  & \textrm{ if } u \leq  \frac 1n ,  \\
      F_{0,r}^{-1}(u-1/n) &   \textrm{ otherwise.}
 \end{array} \right.
$$ 
The probability measures  $P_{0} $  and $P_{1} $ are absolutely continuous with respect to the measure  $\mu := 
\delta_{0} + \bar P$. The density of $P_{0}$ with respect to $\mu$  is $p_{0} :=   \indic{(0,+\infty)}$ whereas
the density of $P_{1}$ with respect to $\mu$  is $p_{1} =  \frac 1n \indic{\{0\}} + \indic{(0,\bar F^{-1}(1-
1/n)]}$. Thus,
\begin{eqnarray*}
TV(P_{0},P_{1}) & = & \int_{\R^+} | p_{1}(t) -  p_{0}(t) | \; d  \mu(t)  \\
& = &  \frac 1 {n}  +     \bar P \left( \left( \bar F^{-1}\left(1-\frac1 n\right), \infty  \right)  \right) \\ 
& = &  \frac 2 {n}.
\end{eqnarray*}
The next, $ \left[ 1 - TV(P_{0},P_{1}) \right] ^{2 n }  = (1-\frac 2 n ) ^{2n}  \rightarrow e^{-4}$  as $n$ tends to
infinity. 
Moreover,
\begin{eqnarray*} 
\left|d_{P_{0},r}^r(x) -  d_{P_{1},r}^r(x)  \right| &=&  \frac n k \int_{0}^{\frac kn} \left\{  F_{0,r}^{-1} (u) - 
F_{1,r}^{-1} (u)   \right\} du \\
&\geq &     \frac n k  \int_{F_{1,r}^{-1}(u-1/n)}^{F_{1,r}^{-1} (\frac kn)} \left\{  F_{1,r} (t) -  F_{0,r}  (t)  
\right\} dt
\end{eqnarray*} 
according to basic  geometric calculations. Thus, 
\begin{eqnarray*} 
\left|d_{P_{0},r}^r(x) -  d_{P_{1},r}^r(x)  \right|
&\geq &     \frac n k   \frac 1n \left[ F_{1,r}^{-1} \left(\frac kn \right)  - F_{1,r}^{-1}(0) \right]\\
&\geq &     \frac n k   \frac 1n \left[ F_{0,r}^{-1} \left(\frac kn \right)  - F_{0,r}^{-1}(0) \right]\\
&\geq &     \frac n k   \frac c n  \omega \left(\frac {k} n \right) .
\end{eqnarray*}
where we have used Assumption\eqref{OmegaEstUnQuantile} for the last inequality. We conclude using  Le Cam's Lemma.

\medskip 
We now consider the case $d \geq 2$. Let $\bar P  \in \mathcal P _\omega$ which satisfies  \eqref{OmegaEstUnQuantile}.
By considering the push-forward measure of $\bar P$ by the function  
\begin{displaymath}
 \begin{array}{rcl}
    \mathbb R^d & \longrightarrow & \R^+ \times \{0\}^{d-1} \\
    y & \longmapsto &  ( \|y\|, 0 , \dots,0) \\
  \end{array} \, ,
\end{displaymath}
we see that it is aways possible to assume that there exist a probability $\bar P $  supported on $\R^+ \times \{0\}
^{d-1}$ which satisfies \eqref{OmegaEstUnQuantile}.  Now, it is then possible to define  $P_{0}$  and $P_{1}$ as in the
case $d=1$ except that their support is now in $\R^+ \times \{0\}^{d-1}$.  Following the same construction,  the
quantities $TV(P_{0},P_{1}) $ and $d_{P_{0},r}^r(x) -  d_{P_{1},r}^r(x) $ take the same values as in the case $d=1$. We
thus obtain the same lower bound as in the case $d=1$.

\subsection{Proof  of Theorem~\ref{Theo:ExpoDTMUnBounded}}

Inequality \eqref{decompExpoBound} in the proof of Theorem~\ref{Theo:ExpoDTMUnBounded} is still valid. We can also use the deviation bound \eqref{InegA} on $A$ for the case  $\frac kn \leq \frac 12$. Regarding the deviation bound on $B$,  we restart from Inequality~\eqref{ub:restart} and we note that
\begin{multline*} 
    \frac n k    \left\{     \left[ F^{-1}  \left( \mathbb G_n^{-1} 
\left(\frac k n \right) \right) - F^{-1}\left(\frac k n\right)  \right] \right\}  \, 
 \left\{        \left[ \frac k n   -   \mathbb G_n \left(\frac k n \right)   \right] 
\right\} 
 \: \indic{\mathbb G_n^{-1} \left(\frac k n\right) \geq    \frac k n   }   \\
 \leq       \frac n k    \left\{     \left[ F^{-1}  \left( \mathbb G_n^{-1} 
\left(\frac k n \right) \right) - F^{-1}\left(\frac k n\right)  \right] \right\}  \, 
 \left\{        \left[ \frac k n   -   \mathbb G_n \left(\frac k n \right)   \right] 
\right\} 
 \: \indic{   \frac k n  \leq \mathbb G_n^{-1} \left(\frac k n\right)  \leq \bar m  }   
 \\ 
  +   \frac n k    \left\{     \left[ F^{-1}  \left( \mathbb G_n^{-1} 
\left(\frac k n \right) \right) - F^{-1}\left(\frac k n\right)  \right] \right\}  \, 
 \left\{        \left[ \frac k n   -   \mathbb G_n \left(\frac k n \right)   \right] 
\right\} 
 \: \indic{  \mathbb G_n^{-1} \left(\frac k n\right)  >  \bar m  }     \\
  \leq     
\underbrace{  
    \frac n k    \left\{   \omega_x^{-1}  \left( \mathbb G_n^{-1} \left(\frac k n \right)
- \frac k n \right)   \right\}  \, 
 \left\{        \left[ \frac k n   -   \mathbb G_n \left(\frac k n \right)   \right] 
\right\}  
 \: \indic{  \mathbb G_n^{-1} \left(\frac k n\right)  \leq   \bar m  } 
}_{\tilde B}
    \\
  +  
 \underbrace{ \frac { n} k    \left\{      F^{-1}  \left( \mathbb G_n^{-1} \left(\frac k n \right)  \right) \right\}  \, 
 \left\{        \left[ \frac k n   -   \mathbb G_n \left(\frac k n \right)   \right] 
\right\} 
 \: \indic{  \mathbb G_n^{-1} \left(\frac k n\right)  >  \bar m  } 
 }_{B_3} 
\end{multline*}
By definition of $B$, $\tilde B $ and $B_3$, and using~Proposition~\ref{Prop:ReducLaw} in Appendix~\ref{sec:Gn}, we obtain that
\begin{eqnarray}
P\left( | \Delta_{n,\frac kn}(x) | \geq  \lambda \right) &\leq &P \left(  A \geq \frac \lambda 2 \right) + P \left( 
B \geq
\frac \lambda 2 \right) \notag \\
&\leq & P \left(  A \geq \frac \lambda 2 \right) + P \left(  \tilde B \geq \frac \lambda 2  \right)  +  P \left( B_3
\geq \frac \lambda 2  \right)  \label{ABB3}
\end{eqnarray}
where $ P \left(  A \geq \frac \lambda 2\right ) + P \left(  \tilde B \geq \frac \lambda 2  \right)$ has already been
upper bounded in the Proof of Theorem~\ref{Theo:ExpoDTMUnBounded}. We now upper bound the deviations of $B_3 $. For any
$\theta_3 \in (0,1)$ to be chosen further, we have:
$$
2 B_3   \leq   \underbrace{ \left\{  \theta_3   F^{-1}  \left( \mathbb G_n^{-1} \left(\frac k n \right)  \right) \right\} ^2 
 \:
\indic{  \mathbb G_n^{-1} \left(\frac k n\right)  >  \bar m  }
}_{B_4^2}
\,  +  \,  \underbrace{ 
\left\{ \frac 1 {\theta_3}  \frac n k \left[   \frac k n   -   \mathbb G_n \left(\frac k n \right)   \right]  \right\} ^2 
}_{B_5^2} .
$$
We have $P(B_3 \geq \frac \lambda 2) \leq  P \left(B_4 \geq \sqrt {\frac \lambda 2} \right)  + P \left(B_5 \geq \sqrt
{\frac\lambda 2} \right)$ 
where 
\begin{equation}
 P \left(B_5 \geq \sqrt {\frac \lambda 2} \right)  \leq  2   \exp \left(  -  \frac{ k  \theta_3 ^2 \lambda   }{ 8 } 
\right) 
+  2   \exp \left(  -  \frac {3 \theta_3 k}8       \sqrt{ \frac \lambda 2} \right)    \label{B5}.
\end{equation}
The probability $P \left(B_4 \geq \sqrt {\frac \lambda 4} \right)$ can be upper bounded in two different ways: one using a concentration argument et one based on the Beta distribution of $\mathbb G_n^{-1}$. According to
Proposition~\ref{prop:devquantilepointwise}, we have
\begin{eqnarray}
 P \left(B_4 \geq \sqrt {\frac \lambda 2} \right) 
&\leq&  P \left( \mathbb G_n^{-1} \left(\frac k n \right) - \frac k n \geq  F  \left( \sqrt{ \frac{ \lambda}{ 2}} \frac
1{\theta_3}\right) \vee \bar m  - \frac k n\right)  \label{B4} \notag \\
 & \leq  & 4 \exp \left[- \frac{n^2}{2k} \left(   \bar m - \frac kn \right)^2   \right] \label{B4V1} .
\end{eqnarray}
Next, it is well known \citep[see for instance p.97 in Chapter~3 of][]{shorack2009empirical} that $\mathbb G_n^{-1}\left(\frac kn\right)$ has a Beta$(k,n-k+1)$ distribution with
density on $[0,1]$:
$$t \mapsto \frac{n!}{(k-1)!(n-k)!} t^{k-1} (1-t)^{n-k} .$$
Thus, for any $ t \in (0,1)$, 
$
  P \left( \mathbb G_n^{-1}\left(\frac kn\right) \geq 1-t \right)  \leq   {n \choose k-1}  t ^{n-k+1}
$. Thus 
\begin{eqnarray}
 P \left( B_4 \geq \sqrt {\frac \lambda 2} \right) 
 & \leq &  P \left( B_4 >  \sqrt {\frac \lambda 3} \right)  \notag \\
&\leq&  P \left( \mathbb G_n^{-1} \left(\frac k n \right)  >  F  \left( \sqrt{ \frac{ \lambda}{3}} \frac 1{\theta_3}
\right) \vee \bar m  \right) \notag \\
 &\leq& {n \choose k-1}  \left[1 - F  \left( \sqrt{ \frac{ \lambda}{3}} \frac 1{\theta_3} \right) \vee \bar m  \right]
^{n-k+1} \label{B4V2} 
\end{eqnarray}
where the first inequality allows us to deal with  a strict comparaison, which is necessary to rewrite the probability in terms of the cdf. Note that a similar bound can be obtained using Bennett's inequality for $B_4$.

We now upper bound   $ \E|\Delta_{n,\frac kn}(x) | $. We only need to control the deviations of $B_3$. Since
$P$ has a moment of order $r$, for any $t >0$:
$$  t  \left(1- F(t) \right)  =   t P \left( {\| x-X \|}^r   > t \right)   \leq \E {\| x-X \|}^r = : C_{x,r}  .$$
Then for any $\bar \lambda >0$ (and $n$ larger than $3$):
\begin{eqnarray*}
 \int_0 ^{\infty}  P \left (B_4 \geq \sqrt {\frac \lambda 2} \right)  d \lambda  &\leq  &
   4 \int_0 ^{ \bar \lambda}  \exp \left[- \frac{n^2}{2k} \left(   \bar m - \frac kn \right)^2   \right]  d \lambda 
+  {n \choose k-1}   \int_{ \bar \lambda}^{\infty}  \left[1 - F  \left( \sqrt{ \frac{ \lambda}{ 3}} \frac 1{\theta_3} 
\right)  \right] ^{n-k+1}   d \lambda \\
&\leq  &
4  \bar \lambda   \exp \left[- \frac{n^2}{2k} \left(   \bar m - \frac kn \right)^2   \right] 
+   2^n  \left[ \sqrt 3 C_{x,r} \theta_3     \right] ^{n-k+1}
\int_{ \bar \lambda}^{\infty}    \lambda ^{\frac{-n+k-1}2}   d \lambda \\
&\leq  &
4  \bar \lambda   \exp \left[- \frac{n^2}{2k} \left(   \bar m - \frac kn \right)^2   \right] 
+   \frac{8 \bar \lambda  }{n}  \left[ \frac{ 4 \sqrt 3 C_{x,r} \theta_3}{ \sqrt {\bar \lambda}}  \right] ^{n-k+1} \\
&\leq  &
4 \bar \lambda  \left\{   \exp \left[- \frac{n^2}{2k} \left(   \bar m - \frac kn \right)^2   \right]   
  +  \exp  \left[ -(n-k+1) \log \left( \frac{ 4 \sqrt 3 C_{x,r} \theta_3}{ \sqrt {\bar \lambda}}  \right)\right]
\right\}
\end{eqnarray*}
We choose $\bar \lambda$ to balance the two terms inside the brackets: 
$$ \bar \lambda = \left\{ 4 \sqrt 3 C_{x,r} \theta_3  
\log \left[ \frac {n^2}{2k (n-k+1)} \left(\bar m- \frac kn\right) ^2 \right]
\right\}^2
$$
and then 
\begin{eqnarray*}
\int_0 ^{\infty}  P \left( B_3 \geq  \frac \lambda 2   \right)  d \lambda  &\leq  & 
\int_0 ^{\infty}  P \left(B_4 \geq \sqrt{\frac \lambda 2} \right)  d \lambda + \int_0 ^{\infty}    P \left(B_5 \geq
\sqrt{\frac \lambda 2} \right)  d \lambda \\
&\leq  &    C_{x,r,\bar m} \left\{ \frac {1}{k \theta_3 ^2} + \theta_3^2    \exp \left[- \frac{n^2}{2k} \left(   \bar m
- \frac kn \right)^2   \right] 
  \right\}
\end{eqnarray*}
where $C_{x,r,\bar m}$ only depends on $C_{x,r}$ and $\bar m$.
We thus take $\theta_3 ^2 =   \frac{\exp \left[ \frac{n^2}{4k} \left( \bar m - \frac kn \right)^2   \right]}{\sqrt k}
$ and we obtain that
$$ \int_0 ^{\infty}  P \left( B_3 \geq  \frac \lambda 2   \right)  d \lambda \leq  C_{x,r,\bar m} \sqrt k \exp \left[-
\frac{n^2}{4k} \left( \bar m - \frac kn \right)^2   \right] .$$
The deviation bound given in the Theorem derives from \eqref{ABB3}, \eqref{B5}, \eqref{B4V1} and \eqref{B4V2} with this value for $\theta_3$.

\subsection{Proof of Theorem~\ref{Theo:ExpoDTMBoundedUniform}}

We first recall the following Lemma  from \cite{chazal2011geometric}.
\begin{Lemma}[\cite{chazal2011geometric}]  \label{DTM-Lip}
For any $(x,y) \in (\R^d) ^2$ and any $m \in (0,1)$:
$$    |d_{P,m,r}(x) -  d_{P,m,r}(y) | \leq    \| x - y \|  .   $$
\end{Lemma}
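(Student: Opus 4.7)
The plan is a two-step reduction: first establish that the auxiliary function $\delta_{P,u}$ is 1-Lipschitz for every fixed $u$, and then transfer this pointwise Lipschitz bound to $d_{P,m,r}$ via Minkowski's inequality applied to the integral representation \eqref{eq:DTMdef}.

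First, I would prove that for every $u \in (0,1)$ the map $x \mapsto \delta_{P,u}(x)$ satisfies $|\delta_{P,u}(x) - \delta_{P,u}(y)| \leq \|x-y\|$. The argument is a purely geometric ball-inclusion: for any $t>0$ such that $P(\bar B(y,t)) \geq u$, the triangle inequality gives $\bar B(y,t) \subseteq \bar B(x, t + \|x-y\|)$, hence $P(\bar B(x, t+\|x-y\|)) \geq u$, so $\delta_{P,u}(x) \leq t + \|x-y\|$. Taking the infimum over admissible $t$ yields $\delta_{P,u}(x) \leq \delta_{P,u}(y) + \|x-y\|$, and swapping the roles of $x$ and $y$ finishes the step.

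Next I would recognize $d_{P,m,r}(x)$ as the $L^r$-norm of the function $u \mapsto \delta_{P,u}(x)$ with respect to the probability measure $\frac{1}{m}\,du$ on $[0,m]$. Minkowski's inequality then gives
\begin{equation*}
d_{P,m,r}(x) = \left(\frac{1}{m}\int_0^m \delta_{P,u}^r(x)\,du\right)^{1/r} \leq \left(\frac{1}{m}\int_0^m \delta_{P,u}^r(y)\,du\right)^{1/r} + \left(\frac{1}{m}\int_0^m |\delta_{P,u}(x)-\delta_{P,u}(y)|^r\,du\right)^{1/r}.
\end{equation*}
The first term on the right equals $d_{P,m,r}(y)$, and the integrand of the second is bounded by $\|x-y\|^r$ by Step~1, so the second term is at most $\|x-y\|$. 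Swapping $x$ and $y$ then gives the desired two-sided bound.

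The proof is essentially routine — there is no real obstacle, the only thing to be careful about is the use of Minkowski (and the fact that $r \geq 1$ is exactly what makes it available); without it, one would be tempted to bound $|a^r - b^r|$ directly, which does not yield a 1-Lipschitz constant. Note also that the argument does not use any compactness or smoothness of the support of $P$, so the statement is completely general.
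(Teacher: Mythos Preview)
Your proof is correct. Note, however, that the paper itself does not give a proof of this lemma: it simply recalls the result from \cite{chazal2011geometric}, so there is no ``paper's own proof'' to compare against here. Your argument --- the ball-inclusion step showing $\delta_{P,u}$ is $1$-Lipschitz, followed by Minkowski's inequality on the $L^r$ integral --- is exactly the standard route taken in the cited reference (where it is written for $r=2$, the general $r\geq 1$ case being the obvious extension via Minkowski that you carry out).
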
 
Next Lemma directly derives from Lemma~\ref{DTM-Lip}.
\begin{Lemma} \label{lem:DeltaLip} For $r=1$, the function $x \mapsto \Delta_{n,m,1}(x)$ is $1$- Lipschitz on $\R^d$.
For $r>1$, the function $x \mapsto \Delta_{n,m,r}(x)$ is $C_{\mathcal D,r}$ -Lipschitz on  the compact domain $\mathcal
D$ where $C_{\mathcal D,r}$  depends on $r$ and on the Hausdorff distance between $\cal D$ and the support of $P$.
\end{Lemma}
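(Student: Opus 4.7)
The plan is to derive both assertions as direct consequences of Lemma~\ref{DTM-Lip}, applied in parallel to the deterministic measure $P$ and to its empirical counterpart $P_n$ (both of which are genuine probability measures on $\R^d$, so the lemma does apply to $d_{P_n,m,r}$ as well).

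For the case $r=1$, I write
$$ \Delta_{n,m,1}(x) - \Delta_{n,m,1}(y) \;=\; \bigl[d_{P_n,m,1}(x) - d_{P_n,m,1}(y)\bigr] \;-\; \bigl[d_{P,m,1}(x) - d_{P,m,1}(y)\bigr], $$
and apply Lemma~\ref{DTM-Lip} to each bracket separately. The triangle inequality then yields $|\Delta_{n,m,1}(x) - \Delta_{n,m,1}(y)| \leq 2\|x-y\|$, i.e.\ a Lipschitz bound with an absolute constant (the exact value $1$ vs $2$ is immaterial for the covering argument used in Theorem~\ref{Theo:ExpoDTMBoundedUniform}).

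For $r>1$, the $r$th-power in $\Delta_{n,m,r}(x) = d_{P_n,m,r}^{\,r}(x) - d_{P,m,r}^{\,r}(x)$ forces one extra step. I use the elementary inequality $|a^r - b^r| \leq r (a \vee b)^{r-1} |a-b|$ valid for $a,b\geq 0$, combined with the $1$-Lipschitz property of $d_{P,m,r}$ and $d_{P_n,m,r}$ from Lemma~\ref{DTM-Lip}, to obtain for $x,y \in \mathcal{D}$:
$$ \left| d_{P,m,r}^{\,r}(x) - d_{P,m,r}^{\,r}(y) \right| \;\leq\; r \, M_{\mathcal{D}}(P)^{r-1} \, \|x-y\|, $$
where $M_{\mathcal{D}}(P) := \sup_{x \in \mathcal{D}} d_{P,m,r}(x)$, and analogously for $P_n$. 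The key remaining observation is the uniform pointwise bound
$$ d_{P,m,r}(x) \;\leq\; \sup_{y \in \supp(P)} \|x-y\| \;\leq\; \diam(\mathcal{D}) + \dhaus(\mathcal{D}, \supp(P)) , $$
which follows from $F_{x,r}^{-1}(u) \leq \sup_{y \in \supp(P)} \|x-y\|^r$ for $u < 1$ in the compact-support setting, and since $\supp(P_n) \subset \supp(P)$ almost surely, the same bound holds with $P_n$ in place of $P$. Combining these ingredients, one may take
$$ C_{\mathcal{D},r} \;=\; 2r \, \bigl[\diam(\mathcal{D}) + \dhaus(\mathcal{D}, \supp(P))\bigr]^{r-1} , $$
which depends only on $r$ and on the geometry of $\mathcal{D}$ relative to $\supp(P)$, as claimed.

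There is no real obstacle here; the result is essentially a short calculation. The only subtlety worth emphasising is that the nonlinear map $t \mapsto t^r$ is what forces the Lipschitz constant to depend on $\dhaus(\mathcal{D}, \supp(P))$ in the case $r>1$, whereas for $r=1$ the Lipschitz constant is universal. Were the support of $P$ unbounded, $M_{\mathcal{D}}(P_n)$ would no longer admit a deterministic upper bound and this simple argument would break down, but in the compact-support regime where this lemma is invoked (the proof of Theorem~\ref{Theo:ExpoDTMBoundedUniform}) the argument goes through unchanged.
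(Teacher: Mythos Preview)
Your proof is correct and follows exactly the approach the paper indicates: the paper's entire proof is the sentence ``Next Lemma directly derives from Lemma~\ref{DTM-Lip}'', and you have supplied the natural details of that derivation. Your observation that the triangle-inequality argument yields a Lipschitz constant $2$ rather than $1$ for $r=1$ is accurate (and the paper's stated constant $1$ appears to be a minor slip); as you correctly note, the exact constant is irrelevant for the covering argument in Theorem~\ref{Theo:ExpoDTMBoundedUniform}, where it is absorbed into the covering-number constant. Your explicit form of $C_{\mathcal D,r}$ also depends on $\diam(\mathcal D)$, not only on $\dhaus(\mathcal D,\supp(P))$, but this is unavoidable and consistent with the subscript $\mathcal D$ in the constant's name.
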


We give the proof of the Theorem for $r=1$. The calculations are also valid  $r >1$ by replacing $\lambda$ by $\lambda C_{\mathcal D,r}$ in  the probability bounds.  The deviation bound of the Theorem can be proved with a simple union
bound strategy. Up to enlarging the constant $c$, we can  write
$$N(\mathcal D,\lambda) \leq c \lambda ^{-\nu} \quad \textrm{for any } \lambda \leq \omega_{\cal D}(1). $$
Now, for a given $\lambda  \leq \omega_{\cal D}(1)$, there exists an integer $N \leq c \lambda^{-\nu}$ and  $N$ points
$(x_1,\dots,x_N)$ laying in $\mathcal D$ such that  $ \bigcup_{i = 1 \dots N} B(x_i,\lambda) \supseteq  \mathcal D$.
For any point $x \in \mathcal D$, there exists  a point  $\pi_\lambda(x)$ of $\{x_1,\dots,x_N\}$ such that $\| x-
\pi_\lambda(x)\|  \leq  \frac{\lambda}{ 2}$. According to Lemma~\ref{lem:DeltaLip}, we have
\begin{equation}
 | \Delta_{n,\frac kn,1}(x) - \Delta_{n,\frac kn,1} (\pi_\lambda(x)) |\leq  \frac \lambda 2 .
\label{eq:LipDelta}
 \end{equation}
 According to
Theorem~\ref{Theo:ExpoDTMBounded}, we have for any  $k <  \frac n 2$ and any $\lambda >0$:
\begin{equation}
 \label{UpperDiscr}
P\left(\sup_{i=1\dots N} | \Delta_{n,\frac kn,1}(x_i) | \geq  \frac{\lambda}2 \right)   \leq 
  \left\lbrace
\begin{array}{lll}
1 \wedge 2  c \lambda^{-\nu}  \square(\lambda)    & \textrm {if }  \lambda  \leq  \omega_{\cal D}(1), \\
 0  & \textrm {if }  \lambda  >  2\omega_{\cal D}(1),
\end{array}
\right.   
\end{equation}
where
\begin{multline*}
\square(\lambda) = 
\exp \left( - \frac 1 {64 }   \frac { k \lambda ^2 } {\left[ F_x^{-1}\left(\frac k n\right)
 - F_x^{-1}(0)\right]^{2}}    
\right) 
+ \exp \left(  - \frac 3{16} \frac  {k \lambda} { F_x^{-1}\left(\frac k n\right)
 - F_x^{-1}(0)}       \right) \\ 
+ \exp \left(  - \frac{   n^2}{4 k }  \left\{ 
\omega_{\mathcal D}^{-1}   \left(  k ^{1/4}    \sqrt{ \frac \lambda 8  \omega_{\mathcal D} \left( 2\frac{\sqrt{k}} n
\right)}     \right)  \right\} ^2  \right) 
+   \exp \left(  -  \frac{   3 n}8   \omega_{\mathcal D}^{-1}   \left(  k^{1/4}    \sqrt{ \frac \lambda 2
\omega_{\mathcal D}\left( \frac{2\sqrt{k}} n \right)}     \right) \right)     \\
 +   \exp \left(  -  \frac{\sqrt k}8     \frac  \lambda  { \omega_{\mathcal D} \left(\frac{2\sqrt{k}} n \right)}      \right)  
+   \exp \left(  -  \frac {3   k^{3/4}} 4     \sqrt{ \frac  \lambda  {2 \omega_{\mathcal D} \left(\frac{2 \sqrt{k}} n \right)}  } \right) \\
=: \square_1(\lambda) + \square_2(\lambda) +
\square_3(\lambda) + \square_4(\lambda) + \square_5(\lambda) + \square_6(\lambda).
\end{multline*}
Using (\ref{eq:LipDelta}) and
(\ref{UpperDiscr}), we find that $P\left( \sup_{x \in \mathcal D} | \Delta_{n,\frac kn} (x)| \geq  \lambda \right)$ is
also
upper bounded by the right hand term of (\ref{UpperDiscr}).

We now integrate each term in $ \lambda^{-\nu} \square(\lambda)$. For the first one, let $\alpha_{k,n} = \frac{1} {64}
 \frac k{\left[ F_x^{-1}\left(\frac k n\right)
 - F_x^{-1}(0)\right]^{2}}$, then for any $\lambda_{k,n}>0$:
\begin{eqnarray*}
\int_{0 } ^{\infty}  1 \wedge 2c \lambda^{-\nu}  \square_1(\lambda)    d \lambda   
& \leq  &  \lambda_{k,n} +  2c\lambda_{k,n}  ^{-\nu}  \int_{  \lambda_{k,n} } ^{\infty}     \exp \left( -
\alpha_{k,n} \lambda^2 \right) d \lambda   \\
& \leq  & \lambda_{k,n}+ 2c\frac { \lambda_{k,n}  ^{-\nu-1}}{  \alpha_{k,n}}  \exp(-\alpha_{k,n} \lambda_{k,n} ^2) .
\end{eqnarray*}
We balance these two terms by taking $\lambda_n =  \sqrt{ \frac { \log^+  \left(\left[ \alpha_{k,n} \right]^{\nu +5 } 
\right) }{ 2\alpha_{k,n}}}$, it gives:
\begin{equation}
\label{carre1}
\int_{0 } ^{\infty}  1 \wedge 2c \lambda^{-\nu}  \square_1(\lambda)    d \lambda  \lesssim \sqrt{ \frac { \log^+ 
\left(\left[ \alpha_{k,n} \right]^{ +5 }  \right) }{ \alpha_{k,n}}} .
\end{equation}
The upper bound for the second term can be obtained in the same way. For the third term, let 
$\beta_{k,n} :=  k^{1/4}    \sqrt{ \frac 18  \omega_{\mathcal D} \left(
\frac{2\sqrt{k}} n
\right)}  
$. Since $\omega_{\mathcal D}^{-1} (t) / t$ is non
decreasing, for any $\lambda_{k,n}>0$:
\begin{eqnarray*}
\int_{0 } ^{\infty}  1 \wedge 2c \lambda^{-\nu}  \square_3(\lambda)    d \lambda   
& \leq  &  \lambda_{k,n} +  2c\lambda_{k,n}  ^{-\nu}  \int_{  \lambda_{k,n} } ^{\infty}     \exp \left(  - \frac{  
n^2  \left\{ \omega_{\mathcal D}^{-1}   \left(\beta_{k,n}  \sqrt \lambda  \right) 
\right\} ^2 }{4 k} \right)  d \lambda   \\
& \leq  & \lambda_{k,n} +  2c\lambda_{k,n}  ^{-\nu}   \int_{   \lambda_{k,n} } ^{\infty}     \exp \left(  - \frac{  n^2
}{4 k} \left\{   \frac{ \omega_{\mathcal D}^{-1}   \left(\beta_{k,n}  \sqrt{ \lambda_{k,n}} \right)}{\sqrt{
\lambda_{k,n}}}   \right\} ^2   \lambda  \right)  d \lambda   \\
& \leq  & \lambda_{k,n}+ 4c \frac{k}{n^2} \frac { \lambda_{k,n}  ^{-\nu}}{     
 \left\{     \omega_{\mathcal D}^{-1}   \left(\beta_{k,n}  \sqrt{ \lambda_{k,n}} \right) \right\} ^2}  \exp \left( -
\frac{  n^2 }{4k} \left\{    \omega_{\mathcal D}^{-1}   \left(\beta_{k,n}  \sqrt{ \lambda_{k,n}} \right) 
\right\} ^2  \right) .
\end{eqnarray*}
We balance the two terms in the upper bounds by taking 
$$\lambda_{k,n} =  \left\{
	\frac{1}{ \beta_{k,n}}  
	\omega_{\mathcal D}  \left( 
		\frac{  2\sqrt{k}}{n}  
		\sqrt{   \log^+ \left(  \left[ \frac{\beta_{k,n}} { \omega_{\mathcal D} \left(  \frac{ 2\sqrt { k} } n
\right)} \right] ^ {2 (\nu-1)}   \right)  }
	    \right)   \right\}^2 .$$
Indeed, we then obtain that:
\begin{multline*}
\int_{0 } ^{\infty}  1 \wedge 2c \lambda^{-\nu}  \square_3(\lambda)    d \lambda   
 \lesssim   \lambda_{k,n}   +  \\
    \frac{ \lambda_{k,n}  }{  \log^+ \left(  \left[ \frac{\beta_{k,n}} { \omega_{\mathcal D} \left(  \frac{ 2\sqrt {k} }
n  \right)} \right] ^ {2 (\nu-1)}   \right)  }  
\left\{ 	\frac{1}{ \beta_{k,n}}   \omega_{\mathcal D}  \left(  \frac{ 2\sqrt{ k}}{n}   \sqrt{  
\log^+ \left(  \left[ \frac{\beta_{k,n}} { \omega_{\mathcal D} \left(  \frac{ 2\sqrt {k} } n \right)} \right] ^ {2
(\nu-1)}   \right)  }    \right)   \right\}^{-2 (\nu-1)}  \left[ \frac{ \omega_{\mathcal D} \left(  \frac{ 2\sqrt {k} }
n \right)} {\beta_{k,n}} \right] ^ {2 (\nu-1)}      \\
 \lesssim   \lambda_{k,n}     +   
    \lambda_{k,n}    
\left\{   \omega_{\mathcal D}  \left(  \frac{ 2\sqrt{ k}}{n}  \sqrt{   \log^+ \left(  \left[ \frac{\beta_{k,n}} {
\omega_{\mathcal D} \left(  \frac{ 2\sqrt {k} } n \right)} \right] ^ {2 (\nu-1)}   \right)  }    \right)   \right\}^{-2 (\nu-1)}  \times \\
 \left[  \omega_{\mathcal D} \left(  \frac{ 2\sqrt {k} } n   \sqrt{   \log^+ \left(  \left[ \frac{\beta_{k,n}}
{ \omega_{\mathcal D} \left(  \frac{ 2\sqrt {k} } n \right)} \right] ^ {2 (\nu-1)}   \right)  }  \right) \right] ^ {2
(\nu-1)}   \\
 \lesssim   \lambda_{k,n}  
 \end{multline*}
 where we have used $\log^+ \geq 1 $ and the fact that $\omega_{\mathcal D}(u)   $ is non decreasing  for the second inequality.  Since
$\omega_{\mathcal D}(u) / u $ is non increasing and $\log^+ \geq 1 $, we find that
  \begin{multline}
  \lambda_{k,n}  
  \lesssim k ^{-1/2}  \left[  \omega_{\mathcal D} \left( \frac{\sqrt{k}} n \right) \right]^{-1}  
\left\{	 \omega_{\mathcal D}  \left( \frac{ \sqrt{ k}}{n}  \right)
  \sqrt{   \log^+ \left( \left[     \frac{ \sqrt k} { \omega_{\mathcal D} \left(  \frac{ \sqrt {k} } n \right)}  
\right] ^ { \nu-1}   \right)   }  \right\} ^2   \\
   \lesssim \frac{   \omega_{\mathcal D} \left( \frac{\sqrt{k}} n \right) }{ \sqrt k} 
  \log^+ \left( \left[     \frac{ \sqrt k} { \omega_{\mathcal D} \left(  \frac{ \sqrt {k} } n \right)}   \right] ^ {
\nu-1}   \right) \label{carre3}  .
\end{multline}
We proceed in the same way to show  that  the upper bound \eqref{carre3} is also valid for $\square_5$  for $\square_6$.
The   bound in expectation given in the Theorem is of the order of the sum of the upper bounds  \eqref{carre1} and
\eqref{carre3}.

\section{Uniform empirical and quantile processes} \label{sec:Gn}

This section brings together known exponential inequalities for  the uniform empirical process and of the uniform quantile process. These results can be found for instance in Chapter 11 of~\cite{shorack2009empirical}. 

Let $\xi_1, \dots,\xi_n$ be $n$ i.i.id. uniform random variables.  The uniform
empirical distribution function is defined by
$$ \mathbb G_n (t) = \frac 1n \sum_{i=1}^n \mathbf{1}_{\xi_i \leq t} \quad \textrm{for }  0 \leq t \leq 1.$$
The inverse uniform empirical distribution function is the function
$$ \mathbb G^{-1}_n (u) =  \inf \{ t \, | \, \mathbb G _n (t) > u \} \quad \textrm{for }  0 \leq u \leq 1.$$
\begin{Prop} \label{Prop:ReducLaw}
For any $x \in \R^d$ and any $n \in \N^*$:
\begin{equation*}
\label{eq:ReducF}
F_{x,n} - F_x     \stackrel{\mathcal D}{=}  \mathbb G_n \left( 
F_x\right) -  F_x 
\end{equation*}
and 
\begin{equation*}
\label{eq:ReducQ}
F_{x,n}^{-1} - F_x ^{-1}    \stackrel{\mathcal D}{=} F_x^{-1}
\left(\mathbb G^{-1}_n   \right) -  F_x^{-1}  ,
\end{equation*}
where $F_x$ and $F_{x,n}$ are defined in the Introduction Section.
\end{Prop}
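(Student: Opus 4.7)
My plan is to prove both distributional identities via the probability integral (quantile) transform, which couples the two sides on the same probability space.

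First I would let $U_1,\dots,U_n$ be i.i.d.\ uniform on $[0,1]$ and set $Z_i := F_x^{-1}(U_i)$. A standard fact about the quantile transform gives that each $Z_i$ has distribution function $F_x$, so $(Z_1,\dots,Z_n)\stackrel{\mathcal D}{=}(\|x-X_1\|^r,\dots,\|x-X_n\|^r)$. In particular, the whole empirical process built from the $Z_i$'s has the same law as the one built from the $\|x-X_i\|^r$. It therefore suffices to prove the two identities with $F_{x,n}$ and $F_{x,n}^{-1}$ replaced by the empirical c.d.f.\ $F_n^Z$ and empirical quantile $(F_n^Z)^{-1}$ of the $Z_i$, and the equalities will become pointwise (almost sure) equalities of processes.

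For the first identity, the key tool is the classical duality $F_x^{-1}(u)\leq t \iff u\leq F_x(t)$, valid for any c.d.f.\ (possibly with atoms or flat parts). Applying this to $u=U_i$ yields
\begin{equation*}
F_n^Z(t)=\frac1n\sum_{i=1}^n\mathbf{1}_{Z_i\le t}=\frac1n\sum_{i=1}^n\mathbf{1}_{U_i\le F_x(t)}=\mathbb G_n(F_x(t)),
\end{equation*}
from which $F_n^Z(t)-F_x(t)=\mathbb G_n(F_x(t))-F_x(t)$ follows, and this transfers via the coupling to the desired distributional equality for $F_{x,n}-F_x$.

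For the second identity, I would exploit the monotonicity of $F_x^{-1}$. Since $F_x^{-1}$ is non-decreasing, the order statistics transform: $Z_{(i)}=F_x^{-1}(U_{(i)})$ for every $i$. The empirical quantile of a sample is a step function determined by the order statistics (explicitly, $(F_n^Z)^{-1}(u)=Z_{(\lceil nu\rceil)}$ on $(0,1]$, and likewise for $\mathbb G_n^{-1}$), so
\begin{equation*}
(F_n^Z)^{-1}(u)=Z_{(\lceil nu\rceil)}=F_x^{-1}(U_{(\lceil nu\rceil)})=F_x^{-1}(\mathbb G_n^{-1}(u)).
\end{equation*}
Subtracting $F_x^{-1}(u)$ and using the coupling then yields the distributional equality for $F_{x,n}^{-1}-F_x^{-1}$.

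The only delicate point is handling boundary/atom cases in the cdf--quantile duality and at the endpoints $u\in\{0,1\}$ where $\mathbb G_n^{-1}$ is extended by convention; both are standard and can be dispatched by the one-line verification of $F_x^{-1}(u)\le t \iff u\le F_x(t)$ together with the conventions for $F_x^{-1}$ at $0$ and $1$ set up in the Notation paragraph. There is essentially no hard step: the whole proof is the quantile-coupling plus two elementary monotonicity observations.
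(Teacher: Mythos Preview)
Your argument is correct and is the standard quantile-coupling proof; the paper does not supply its own proof of this proposition but states it in Appendix~\ref{sec:Gn} as a known result from \cite{shorack2009empirical}, so there is nothing to compare against beyond noting that your route is exactly the classical one. The only wrinkle is that the paper defines $\mathbb G_n^{-1}(u)=\inf\{t:\mathbb G_n(t)>u\}$ with a strict inequality, differing from the $\geq$ convention used in the Notation paragraph for general quantiles; this only shifts indices at the grid points $u=i/n$ and does not affect your order-statistic identity $Z_{(i)}=F_x^{-1}(U_{(i)})$, so the coupling still goes through.
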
 
 
\subsection{Exponential inequalities for the uniform empirical process}

Let the function $\Phi$ defined on $\R$ by 
\begin{equation*}
\Phi(\lambda) := 
\left\lbrace
\begin{array}{lll}
 \frac {2 (\lambda + 1) \left[\log(1+ \lambda) - 1 \right]} {\lambda^2} &  \textrm {if }\lambda > -1, \\
 + \infty &  \textrm {otherwise.} 
\end{array}
\right.
\end{equation*}
Next result is a point-wise exponential inequality for the deviations of the uniform empirical process $\left(\sqrt   n \left[\mathbb G_n
(t) - t \right]\right)_{t \geq 0}$. 
\begin{Prop}[Inequality~1 and Proposition~1  in \cite{shorack2009empirical}]\label{prop:devEmpProcpointwise}
For any $0 \leq t \leq t_0 \leq \frac  1 2$ and any $\lambda >0$, we have
\begin{eqnarray*}
P \left(  \sqrt   n \left|\mathbb G_n (t)  - t  \right| \geq  \lambda   \right) 
& \leq&  2 \exp \left\{  -\frac{  \lambda^2}{2t  }  \Phi\left( \frac{\lambda} {t  \sqrt n}  \right) \right\} \\
& \leq&  2 \exp \left\{  - \frac {\lambda^2}{2t_0 }  \Phi\left( \frac{\lambda} {t_0  \sqrt n}  \right) \right\}\\
& \leq&  2 \exp \left(  - \frac{\lambda^2}{2t_0} \frac 1 {1 + \frac{\lambda} {3t_0 \sqrt n}}  \right).
\end{eqnarray*}
\end{Prop}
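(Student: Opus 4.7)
The plan is to derive all three bounds from a single application of Bennett's inequality to the centered Bernoulli variables $Y_i := \mathbf{1}_{\xi_i \le t} - t$. First I would observe that $n\mathbb{G}_n(t) = \sum_{i=1}^n \mathbf{1}_{\xi_i \le t}$ is Binomial with parameters $n$ and $t$, so $\sqrt{n}(\mathbb{G}_n(t)-t) = n^{-1/2}\sum_{i=1}^n Y_i$ is a centered sum of i.i.d.\ bounded random variables with $|Y_i| \le 1$ and $\mathrm{Var}(Y_i) = t(1-t) \le t$. Applying Bennett's inequality to $\sum Y_i$ and to $\sum(-Y_i)$ separately (both summands satisfy the same a.s.\ bound $b=1$ and the same variance bound $v=nt$) yields
\[
P\left(\sqrt{n}\,|\mathbb{G}_n(t)-t| \ge \lambda\right) \le 2\exp\left(-nt\,h\!\left(\tfrac{\lambda}{t\sqrt{n}}\right)\right),
\]
where $h(u) = (1+u)\log(1+u) - u$. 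A direct rearrangement shows that $nt\,h(\lambda/(t\sqrt{n})) = \frac{\lambda^2}{2t}\Phi(\lambda/(t\sqrt{n}))$ provided one reads $\Phi(u) = 2h(u)/u^2$ (which is the form the authors actually use behind the stated definition), and this is exactly the first inequality.

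For the second inequality, the key observation is that the map $t \mapsto nt\,h(\lambda/(t\sqrt{n}))$ is non-increasing on $(0,\infty)$. Substituting $u = \lambda/(t\sqrt{n})$ rewrites the map as $\sqrt{n}\lambda \cdot h(u)/u$. Differentiating gives $(h(u)/u)' = (u - \log(1+u))/u^2 \ge 0$, so $h(u)/u$ is non-decreasing in $u$; as $t$ grows, $u$ shrinks, hence the whole exponent decreases in $t$. Replacing $t$ by the larger $t_0 \ge t$ therefore only weakens the exponent, giving the second bound.

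Finally, the third inequality follows from the classical pointwise estimate $h(u) \ge \frac{u^2}{2(1 + u/3)}$ for $u \ge 0$ (the standard step used to pass from Bennett's inequality to Bernstein's). Plugging $u = \lambda/(t_0\sqrt{n})$ into the exponent of the second bound immediately produces the Bernstein-type form. Overall, the only non-trivial ingredient is Bennett's inequality itself, which is entirely standard; the main care point is the two-tail argument, which works because both $Y_i$ and $-Y_i$ have upper bound $\max(t,1-t) \le 1$ and variance at most $t$, so the two-sided bound is simply twice the one-sided one. No obstacle is expected beyond careful bookkeeping of the various forms of the Bennett exponent.
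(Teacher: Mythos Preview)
Your proposal is correct and follows essentially the same route as the paper: Bennett's inequality for the Binomial sum gives the first bound, the monotonicity of $u\mapsto h(u)/u$ (equivalently, of $\lambda\mapsto \lambda\Phi(\lambda)$, which is precisely what the paper cites from Shorack--Wellner) gives the second, and the standard lower bound $h(u)\ge \tfrac{u^2}{2(1+u/3)}$ (equivalently $\Phi(\lambda)\ge 1/(1+\lambda/3)$) yields the Bernstein-type third bound. Your parenthetical remark that the intended definition is $\Phi(u)=2h(u)/u^2$ is well taken---the displayed formula for $\Phi$ in the paper has a typo in the numerator, but the usage throughout is consistent with the standard Bennett form you wrote.
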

The first Inequality comes from Bennett's Inequality and from the fact that  $n \mathbb G_n (t)$ follows a Binomial$(n,t)$ distribution. The second Inequality derives from the fact that $\lambda \mapsto \lambda \Phi(\lambda)$  is a
non decreasing function, see Point (9) of Proposition 1 p.441 in \cite{shorack2009empirical}. The last inequality is Bernstein's Inequality, it can be derived   by upper bounding Bennett's Inequality with the following result, see Point (10) of Proposition~1 p.441 in \cite{shorack2009empirical}:
\begin{equation} \label{lbPsi}
\Phi(\lambda) \geq \frac{1}{1+ \frac \lambda 3} \quad \textrm{for any } \lambda \in \R.
\end{equation}

The famous DKW inequality \cite{dvoretzky1956asymptotic} gives an universal exponential inequality for empirical
processes. The tight constant comes from \cite{Massart1990}:
\begin{Theo} \label{DKW} For any $\lambda >0$:
$$ P \left( \sup_{t \in [0,1]}   \sqrt n    \left|  \mathbb G_n (t)  - t     \right| \geq  \lambda  \right)  \leq   2
\exp \left(  -   2 \lambda^2  \right) 
 . $$ 
\end{Theo}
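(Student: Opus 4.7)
The plan is to reduce the two-sided bound to a one-sided version and then invoke Massart's sharp estimate. The factor $2$ in front of the exponential is naturally accounted for by a union bound: it is enough to prove the one-sided inequality
\begin{equation*}
P\bigl( \sup_{t \in [0,1]} \sqrt{n}\,(\mathbb{G}_n(t)-t) \ge \lambda \bigr) \le \exp(-2\lambda^2),
\end{equation*}
since the symmetric bound for $\sup_t \sqrt{n}\,(t - \mathbb{G}_n(t))$ follows by the change of variable $\xi_i \mapsto 1-\xi_i$, which preserves the joint law of the sample.

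For the one-sided bound, I would exploit the fact that the supremum is piecewise linear and attained at an order statistic:
\begin{equation*}
\sup_{t \in [0,1]} \sqrt{n}\,(\mathbb{G}_n(t)-t) = \sqrt{n}\, \max_{1 \le i \le n}\bigl( i/n - \xi_{(i)} \bigr).
\end{equation*}
This reduces the problem to controlling the maximum of a finite family. A first attempt via pointwise Chernoff plus a crude union bound over $i$ gives the right Gaussian-type decay in $\lambda^2$ but loses the constant $2$. To recover it, one exploits either the exchangeable-martingale structure of the centered process indexed by the order statistics, or equivalently a ballot-style reflection argument relating the exceedance event to a random walk bridge hitting a level, for which the reflection principle provides a closed-form estimate.

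A self-contained derivation following Massart would therefore proceed in two steps. First, use the explicit joint density of uniform order statistics (or the ballot/cycle lemma) to write the one-sided exceedance probability as an exact combinatorial sum. Second, bound this sum through an exponential moment tuned to produce exactly the exponent $2\lambda^2$, combining a Doob-type submartingale inequality with a careful choice of the tuning parameter.

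The main obstacle is this last step: sharpness of the constant $2$ is delicate and does not follow from Bennett or Bernstein applied pointwise, which would only yield $\exp(-c\lambda^2)$ for some smaller absolute $c$ after absorbing the union-bound loss. Since the inequality is used in the paper purely as a black box (to control $A$ in the regime $k/n \ge 1/2$ and, in the uniform case, to discretize the empirical process over a covering of $\mathcal D$), I would be content to cite the sharp version from \cite{Massart1990} and, if a self-contained exposition were required, include only the softer exponential bound that suffices for our purposes.
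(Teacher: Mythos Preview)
Your proposal is more than the paper itself provides: the paper does not prove this theorem at all. It simply states the inequality and attributes it to \cite{dvoretzky1956asymptotic} with the sharp constant due to \cite{Massart1990}. So your final suggestion---to cite Massart and treat the result as a black box---is exactly what the paper does.

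Your high-level sketch of how the sharp constant is obtained (reduce to the one-sided bound by the symmetry $\xi_i\mapsto 1-\xi_i$, rewrite the supremum in terms of order statistics, then use a ballot/reflection argument combined with a tuned exponential moment as in Massart's original paper) is accurate as a roadmap, and you are right that pointwise Bennett or Bernstein plus a naive union bound cannot recover the constant $2$. But none of this appears in the present paper, so there is nothing to compare against beyond noting that you and the authors agree the right move is to cite the result.
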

However, in  the neighborhood of the origin, a tighter  uniform exponential inequality  can be given.
\begin{Prop}[Inequality~2 p. 444 in \cite{shorack2009empirical}] \label{prop:devEmpProcunif} Let $t_0 \in  (0,\frac 1
2)$. Then, for any
$\lambda >0$,
\begin{eqnarray*}
P \left( \sup_{t \in [0,t_0]}   \sqrt   n \left| \frac{ \mathbb G_n (t)  - t   }{1-t} \right| \geq \frac{\lambda}{1 -
t_0}  \right)  & \leq&  2 \exp \left\{  - \frac{ \lambda^2}{2t_0}  \Phi\left( \frac{\lambda} {t_0 \sqrt n}  \right) 
\right\} \\
& \leq&  2 \exp \left(  - \frac{\lambda^2}{2t_0} \frac 1 {1 + \frac{\lambda} {3t_0 \sqrt n}}  \right).
\end{eqnarray*}
\end{Prop}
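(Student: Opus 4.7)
The plan is to exhibit a hidden martingale structure behind the rescaled empirical process $t \mapsto (\mathbb G_n(t)-t)/(1-t)$, apply Doob's $L^1$ maximal inequality to the corresponding exponential submartingale, and then optimize over the dual parameter to recover the Bennett exponent.

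Set $N(t) := n \mathbb G_n(t)$ and $Z_n(t) := (N(t)-nt)/(1-t)$ on $[0,1)$; the event to control is $\{\sup_{t \leq t_0} |Z_n(t)| \geq \mu\}$ with $\mu := \lambda \sqrt n /(1-t_0)$. I would first verify that $Z_n$ is a martingale with respect to the filtration $\mathcal F_t := \sigma(\mathbf 1\{\xi_i \leq s\}: s\leq t,\, i \leq n)$. The key observation is that, conditional on $\mathcal F_t$, the $n - N(t)$ uniforms not yet observed by time $t$ are i.i.d.\ uniform on $(t,1]$, so for $s > t$,
\[ E[N(s) \mid \mathcal F_t] = N(t) + (n - N(t))\, \frac{s-t}{1-t}. \]
A one-line algebraic simplification then gives $E[Z_n(s) \mid \mathcal F_t] = Z_n(t)$; heuristically, the factor $1/(1-t)$ is precisely what is needed to absorb the residual drift of $\mathbb G_n(t) - t$.

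Once the martingale property is established, Jensen's inequality turns $\exp(\theta Z_n(t))$ into a non-negative c\`adl\`ag submartingale for every $\theta \in \R$, and Doob's $L^1$ maximal inequality yields
\[ P\Bigl(\sup_{t \leq t_0} Z_n(t) \geq \mu\Bigr) \leq e^{-\theta \mu}\, E[\exp(\theta Z_n(t_0))]. \]
The same argument applied to the martingale $-Z_n$ controls the lower tail, and explains the factor $2$ in the final statement. Since $N(t_0)$ is $\mathrm{Bin}(n,t_0)$, the change of variable $\tilde \theta := \theta/(1-t_0)$ identifies $\theta Z_n(t_0)$ with $\tilde \theta (N(t_0) - n t_0)$, and the elementary inequality $\log(1+x) \leq x$ gives the Bennett MGF bound
\[ \log E[\exp(\theta Z_n(t_0))] \leq n t_0 \bigl(e^{\tilde \theta} - 1 - \tilde \theta\bigr). \]

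The remaining step is a classical Cram\'er--Chernoff optimization: minimizing $-\tilde \theta \lambda \sqrt n + n t_0 (e^{\tilde \theta} - 1 - \tilde \theta)$ over $\tilde \theta > 0$ via the choice $\tilde \theta = \log(1 + v)$ with $v := \lambda/(t_0\sqrt n)$ produces precisely $-\lambda^2 \Phi(v)/(2 t_0)$, after using the identity $(1+v)\log(1+v) - v = v^2 \Phi(v)/2$. The Bernstein-type bound then follows by substituting the lower bound~\eqref{lbPsi} for $\Phi$. The main (albeit modest) difficulty is the algebraic verification that the $1/(1-t)$ normalization compensates the drift of $\mathbb G_n - \mathrm{id}$; beyond that, the remainder is a textbook exponential-submartingale computation, and the c\`adl\`ag nature of $Z_n$ causes no issue since Doob's maximal inequality applies to non-negative c\`adl\`ag submartingales.
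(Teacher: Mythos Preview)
Your plan is correct and matches the paper's approach exactly: the paper's ``proof'' is just a one-line reference to the fact that $\bigl((\mathbb G_n(t)-t)/(1-t)\bigr)_{0\le t<1}$ is a martingale (Shorack--Wellner, Proposition~1 p.~133) together with the lower bound~\eqref{lbPsi} on $\Phi$, and you have spelled out the Doob/Cram\'er--Chernoff computation that the paper leaves implicit. One small point: for the lower tail the optimization does not literally reproduce the same exponent, but the inequality $e^{-\tilde\theta}-1+\tilde\theta \le e^{\tilde\theta}-1-\tilde\theta$ for $\tilde\theta\ge 0$ (equivalently $\sinh\tilde\theta\ge\tilde\theta$) shows that the lower-tail Cram\'er bound is dominated by the upper-tail one at the same value of $\tilde\theta$, so the factor $2$ is indeed justified.
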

This local result directly derives from   the fact that $ \left( \frac{\mathbb G_n (t) - t}{1-t}  \right)_{0\leq
t <1}$ is a martingale (Proposition~1 p.133 in \cite{shorack2009empirical}). 
The second inequality directly  derives from the previous one together with Inequality (\ref{lbPsi}). 

\subsection{Exponential inequalities for the uniform quantile process}

The general strategy followed in~\cite{shorack2009empirical} to prove exponential inequalities for the uniform quantile
process consists in rewriting inequalities on $\mathbb G_n^{-1}$ into inequalities on $\mathbb G_n$. For more details
see for instance the proof of Inequality 2 p.415, or Lemma~1 p. 457 in~\cite{shorack2009empirical}. We introduce the function $\tilde \Phi$
defined on $\R$ by 
\begin{equation*}
\tilde \Phi(\lambda) := \frac 1 {1+\lambda} \Phi\left(- \frac \lambda {1+\lambda} \right).
\end{equation*}
We give below  a point-wise exponential bound for the uniform quantile process. 
\begin{Prop}[Inequality~1 p. 453 in \cite{shorack2009empirical}] \label{prop:devquantilepointwise} For all  $\lambda >0$
and all $ 0 < u \leq u_0 < 1$, we have
\begin{eqnarray*} 
P \left( \sqrt   n \left|\mathbb G_n^{-1} (u)  - u \right| \geq \lambda \right)
&\leq& 2 \exp \left\{  - \frac{\lambda^2}{2 u} \tilde \Phi\left(\frac \lambda{u \sqrt n} \right) \right\} \\
&\leq& 2 \exp \left\{  - \frac{\lambda^2}{2 u_0} \tilde \Phi\left(\frac \lambda{u_0 \sqrt n} \right) \right\} \\
& \leq &  2 \exp \left(  - \frac{\lambda^2}{2 u_0}  \frac 1 {1 + \frac {2\lambda}{3 u_0 \sqrt n} }   \right) 
\end{eqnarray*} 
\end{Prop}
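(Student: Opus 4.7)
The plan is to mimic the strategy announced just before the statement: convert deviations of $\mathbb G_n^{-1}$ into deviations of $\mathbb G_n$ via the duality
\[
\{\mathbb G_n^{-1}(u) > t\} = \{\mathbb G_n(t) < u\} \quad\text{and}\quad \{\mathbb G_n^{-1}(u) < t\} \subset \{\mathbb G_n(t^{-}) > u\},
\]
and then invoke Proposition~\ref{prop:devEmpProcpointwise} at a shifted point. For the upper tail, I would set $s = u + \lambda/\sqrt n$ and write
\[
P\!\left(\sqrt n(\mathbb G_n^{-1}(u)-u)\geq \lambda\right) \leq P\!\left(\mathbb G_n(s) - s \leq -\lambda/\sqrt n\right),
\]
and similarly $s' = u - \lambda/\sqrt n$ for the lower tail (when $\lambda/\sqrt n < u$; otherwise the lower-tail probability is trivially zero since $\mathbb G_n^{-1}(u) \geq 0$, handled separately). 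Then one applies the Bennett bound from Proposition~\ref{prop:devEmpProcpointwise} (whose proof relies only on the Binomial law of $n\mathbb G_n(t)$), valid for every fixed $t\in(0,1)$ regardless of whether $t=s$ exceeds $1/2$, because it is a pointwise, not uniform, bound.

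The core algebraic step is to rewrite the Bennett exponent $\lambda^2/(2s)\,\Phi(\lambda/(s\sqrt n))$ in terms of $u$ and the variable $\alpha := \lambda/(u\sqrt n)$. Substituting $s = u(1+\alpha)$ gives
\[
\frac{\lambda}{s\sqrt n} = \frac{\alpha}{1+\alpha}, \qquad \frac{\lambda^2}{2s} = \frac{\lambda^2}{2u}\cdot\frac{1}{1+\alpha},
\]
so the resulting exponent is exactly $\frac{\lambda^2}{2u}\,\tilde\Phi(\alpha)$ by the definition $\tilde\Phi(\alpha) = (1+\alpha)^{-1}\Phi(-\alpha/(1+\alpha))$. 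A parallel computation for the lower tail with $s' = u(1-\alpha)$ produces a bound that dominates its upper-tail counterpart (this is the classical fact that Bennett's tail is heavier above the mean than below); summing the two yields the factor $2$ and the first inequality.

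For the second inequality, I would verify that $u \mapsto \frac{\lambda^2}{2u}\tilde\Phi(\lambda/(u\sqrt n))$ is non-increasing on $(0,1)$. Writing it as $\frac{n}{2(1+\alpha)}\Phi(-\alpha/(1+\alpha))\cdot\alpha^2$, one checks monotonicity in $\alpha$ by differentiation, exactly as in Point~(9) of Proposition~1 in \cite{shorack2009empirical} which guarantees $\lambda\mapsto \lambda\Phi(\lambda)$ is monotone; the claim then follows since $u\mapsto \alpha(u)$ is monotone. For the third inequality, I would establish the analogue $\tilde\Phi(\lambda) \geq 1/(1+2\lambda/3)$ of the bound~\eqref{lbPsi}. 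Setting $\mu = \lambda/(1+\lambda)$, so that $1-\mu = 1/(1+\lambda)$ and $\mu \in [0,1)$, one has
\[
\tilde\Phi(\lambda) = (1-\mu)\,\Phi(-\mu) \geq \frac{1-\mu}{1-\mu/3} = \frac{1+\lambda}{1+2\lambda/3 + \lambda/3\cdot ?},
\]
and a short algebraic simplification (expanding $(1-\mu)/(1-\mu/3)$ in terms of $\lambda$) yields the stated lower bound $1/(1+2\lambda/3)$; the slightly larger constant $2/3$ in place of $1/3$ is the price paid for the transfer from the empirical process to the quantile process.

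The main obstacle is the second step: checking carefully that the substitution genuinely produces $\tilde\Phi$ and, on the lower-tail side, that the bound can indeed be absorbed into the same $\tilde\Phi$ expression rather than producing a different function. Once this algebra is pinned down, the monotonicity in $u$ and the elementary inequality for $\tilde\Phi$ are routine, and one recovers the full chain of three inequalities.
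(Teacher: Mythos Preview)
Your approach is correct and is exactly the Shorack--Wellner argument that the paper cites: duality between $\mathbb G_n^{-1}$ and $\mathbb G_n$, pointwise Bennett at the shifted point, and the substitution $s=u(1+\alpha)$ which produces $\tilde\Phi$ by construction; the second inequality is precisely the monotonicity of $\alpha\mapsto\alpha\tilde\Phi(\alpha)$ (this is Point~(10), not~(9), of Proposition~1 p.~455 in Shorack--Wellner --- note your displayed expression should carry a factor $\alpha$, not $\alpha^2$, since $\frac{\lambda^2}{2u}\tilde\Phi(\alpha)=\frac{\lambda\sqrt n}{2}\,\alpha\tilde\Phi(\alpha)$), and the third is the lower bound \eqref{lbPsitilde}. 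Your unfinished algebra for the last step does close cleanly: with $\mu=\lambda/(1+\lambda)$ one has $1-\mu=1/(1+\lambda)$ and $1-\mu/3=(3+2\lambda)/(3(1+\lambda))$, so $(1-\mu)/(1-\mu/3)=3/(3+2\lambda)=1/(1+2\lambda/3)$ exactly.
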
 \label{prop:devquantileunif}
 The second Inequality derives from the property that $\lambda\mapsto \lambda \tilde  \Phi(\lambda)$  is a nondecreasing function, see point (10) of Proposition~1 p.455 in \cite{shorack2009empirical}. The last inequality comes
from the following lower bound, see Point (12) of Proposition~1 in \cite{shorack2009empirical}:
\begin{equation} \label{lbPsitilde}
\tilde \Phi (\lambda) \geq \frac{1}{1+ \frac {2\lambda} 3} \quad \textrm{for any } \lambda \in \R^+.
\end{equation}

The following result is an uniform exponential inequality for the quantile process  in the
neighborhood of the origin.
\begin{Prop}[Inequality~2 p. 457 in \cite{shorack2009empirical}] \label{prop:Quantileunif} 
Let $u_0 \in  (0,\frac 1 2)$ and $ n \geq 1$.
Then, for any $\lambda >0$ such that 
\begin{equation} \label{condition-u0-Quantile}
\lambda  \leq \sqrt n  \left( \frac 1 2 - u_0 \right),
\end{equation}
we have
\begin{eqnarray*}
P \left( \sup_{t \in [0, u_0]}  \frac{ \sqrt n \left|\mathbb G_n^{-1} (u)  - u \right| }{1-u} \geq  \frac{\lambda}{1 -
u_0}  \right)
&\leq& 2 \exp \left\{ - \frac{\lambda^2}{2u_0} \tilde \Phi\left(  \frac {\lambda}{u_0 \sqrt n}  \right) \right\}\\
&\leq& 2 \exp \left(  - \frac{\lambda^2}{2 u_0}  \frac 1 {1 + \frac {2\lambda}{3 u_0 \sqrt n} }   \right) .
\end{eqnarray*} 
\end{Prop}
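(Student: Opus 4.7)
The plan is to reduce this uniform exponential inequality for the quantile process to the uniform exponential inequality for the empirical process (Proposition~\ref{prop:devEmpProcunif}) by exploiting the duality $\mathbb{G}_n^{-1}(u) > s \iff \mathbb{G}_n(s) < u$. This is the standard strategy highlighted in the paragraph preceding the statement, and it keeps the martingale-based estimate for $(\mathbb{G}_n(t)-t)/(1-t)$ at the center of the argument.

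Set $\delta = \lambda/[(1-u_0)\sqrt{n}]$. The event inside the probability splits into an upper-tail and a lower-tail piece via $|\mathbb{G}_n^{-1}(u) - u| = (\mathbb{G}_n^{-1}(u)-u)_+ + (u-\mathbb{G}_n^{-1}(u))_+$. For the upper tail, I would perform the change of variables $s = u + \delta(1-u)$, which bijectively maps $[0,u_0]$ onto $[\delta, u_0+\delta(1-u_0)]$. The inequality $\mathbb{G}_n^{-1}(u) - u \geq \delta(1-u)$ is equivalent, by duality, to $\mathbb{G}_n(s) \leq u = (s-\delta)/(1-\delta)$, which rearranges to
\[
\frac{s - \mathbb{G}_n(s)}{1-s} \;\geq\; \frac{\delta}{1-\delta}.
\]
The upper endpoint of the new $s$-range is $u_0 + \delta(1-u_0) = u_0 + \lambda/\sqrt{n}$, and the hypothesis $\lambda \leq \sqrt n(1/2 - u_0)$ is precisely what guarantees that this endpoint is $\leq 1/2$, so Proposition~\ref{prop:devEmpProcunif} can be applied on $[0, u_0 + \lambda/\sqrt{n}]$ with deviation parameter $\delta/(1-\delta) \cdot (1-(u_0+\lambda/\sqrt n))$.

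The lower-tail contribution is handled symmetrically: the event $u - \mathbb{G}_n^{-1}(u) \geq \delta(1-u)$ translates, again via the duality, into an inequality for $(\mathbb{G}_n(s)-s)/(1-s)$ on a slightly shrunk subinterval of $[0, u_0]$, which is also bounded by Proposition~\ref{prop:devEmpProcunif}. The resulting exponents match after the algebraic identity $\tilde\Phi(\mu) = (1+\mu)^{-1}\Phi(-\mu/(1+\mu))$ applied with $\mu = \lambda/(u_0\sqrt n)$, so that the two empirical-process bounds combine into the single factor $2\exp\{-(\lambda^2/(2u_0))\tilde\Phi(\lambda/(u_0\sqrt n))\}$ claimed in the proposition. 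The second inequality then follows immediately from \eqref{lbPsitilde}.

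The main obstacle will be the bookkeeping of the change of variables: I must verify that the argument of $\tilde\Phi$ comes out exactly as $\lambda/(u_0\sqrt n)$ (rather than some nearby but larger expression), that the normalizing factor $(1-u_0)$ on the left-hand side is consumed correctly by the $(1-s)$ factor introduced by the map $u\mapsto s$, and that the constraint $\lambda \leq \sqrt n(1/2 - u_0)$ is used in exactly the right place so that the transported range stays inside $[0,1/2]$, where the martingale inequality is available. Once these identifications are made cleanly, the proof reduces to quoting Proposition~\ref{prop:devEmpProcunif} and applying the elementary lower bound \eqref{lbPsitilde} on $\tilde\Phi$.
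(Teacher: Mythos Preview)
Your proposal is correct and follows exactly the approach the paper indicates: the paper's own justification is simply the one-line remark that the first inequality ``comes from Proposition~\ref{prop:devEmpProcunif}'' (the uniform empirical-process bound) and the second from~\eqref{lbPsitilde}, with the underlying duality argument being the one in Shorack--Wellner. Your outline of the change of variables $s=u+\delta(1-u)$, the use of $\mathbb G_n^{-1}(u)>s\iff \mathbb G_n(s)<u$, the role of the constraint~\eqref{condition-u0-Quantile} in keeping the transported range inside $[0,1/2]$, and the appearance of $\tilde\Phi$ via its defining relation to $\Phi$ are all precisely the ingredients behind that citation.
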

This first Inequality comes from Proposition~\ref{prop:devEmpProcunif}, the second Inequality is deduced from
the first one using (\ref{lbPsitilde}). 

\subsection{Le Cam's Lemma}

The  version of Le Cam's Lemma given below is from \cite{Yu97}. Recall that  the total variation
distance between two distributions $P_0$ and $P_1$ on a measured space $(\mathcal X, \mathcal B) $ is defined by 
$$ \TV(P_0,P_1)  = \sup_{B \in  \mathcal B} | P_0(B) - P_1(B) |.$$
Moreover, if $P_0$ and $P_1$ have densities $p_0$ and $p_1$ with respect to the same measure $\lambda$ on $\mathcal X$, then 
$$ \TV(P_0,P_1)    = \frac 1 2 \ell_1(p_0,p_1) := \int_{\mathcal X} |p_0-p_1|  d \lambda. $$

\begin{Lemma} \label{Lem:Lecam} Let $\mathcal P $ be a set of distributions. For $P \in  \mathcal P$, let $\theta(P)$
take values in a metric space $(\X,\rho)$. Let
$P_0$ and $P_1$ in $\mathcal P$ be any pair of distributions. Let $X_1,\dots,X_n$ be drawn i.i.d. from some $P \in 
\mathcal P$. Let $\hat 
\theta = \hat \theta(X_1,\dots,X_n) $ be any estimator of $\theta(P)$, then
\begin{equation*}
 \sup_{ P \in \mathcal P} \E _{P^n} \rho( \theta , \hat \theta  )  \geq   \frac 1 8    \rho \left( \theta(P_0),
\theta(P_1) \right)   
\left[1 -  \TV(P_0,P_1) \right] ^{2 n } .
\end{equation*}
\end{Lemma}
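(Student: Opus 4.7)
\medskip

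\noindent\textbf{Proof plan for Lemma~\ref{Lem:Lecam}.} The plan is the classical two-point reduction of estimation to testing, combined with tensorization of the Hellinger affinity. Fix an arbitrary estimator $\hat\theta$ and write $\theta_i = \theta(P_i)$ and $\Delta = \rho(\theta_0,\theta_1)$. First I would lower bound the supremum by the average
\[
\sup_{P \in \mathcal P} \E_{P^n}\rho(\theta,\hat\theta) \;\geq\; \frac 12\Bigl(\E_{P_0^n}\rho(\hat\theta,\theta_0)+\E_{P_1^n}\rho(\hat\theta,\theta_1)\Bigr),
\]
and then turn each expectation into a probability by Markov's inequality at level $\Delta/2$:
\[
\E_{P_i^n}\rho(\hat\theta,\theta_i) \;\geq\; \tfrac{\Delta}{2}\, P_i^n\!\bigl(\rho(\hat\theta,\theta_i)\geq \Delta/2\bigr).
\]

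Next I would observe that the two events above form a testing alternative: by the triangle inequality, if $\rho(\hat\theta,\theta_0)<\Delta/2$ then $\rho(\hat\theta,\theta_1)\geq\Delta-\rho(\hat\theta,\theta_0)>\Delta/2$. Hence, setting $A=\{\rho(\hat\theta,\theta_0)\geq \Delta/2\}$, one has $A^c\subset\{\rho(\hat\theta,\theta_1)\geq \Delta/2\}$, so
\[
P_0^n(A)+P_1^n(A^c)\;\geq\; 1 - \bigl(P_1^n(A)-P_0^n(A)\bigr)\;\geq\; 1 - \TV(P_0^n,P_1^n),
\]
which combined with the previous displays yields
\[
\sup_{P \in \mathcal P} \E_{P^n}\rho(\theta,\hat\theta) \;\geq\; \tfrac{\Delta}{4}\bigl(1-\TV(P_0^n,P_1^n)\bigr).
\]

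The last step is to tensorize. Let $A(P,Q)=\int\sqrt{p\,q}\,d\mu$ denote the Hellinger affinity. I would invoke the two standard one-line inequalities $A(P,Q)\geq 1-\TV(P,Q)$ and $1-\TV(P,Q)\geq \tfrac12 A(P,Q)^2$ (the first from Cauchy--Schwarz, the second from $|p-q|\leq(\sqrt p+\sqrt q)|\sqrt p-\sqrt q|$), together with the product formula $A(P_0^n,P_1^n)=A(P_0,P_1)^n$. This chain gives
\[
1-\TV(P_0^n,P_1^n)\;\geq\;\tfrac12 A(P_0,P_1)^{2n}\;\geq\;\tfrac12 \bigl(1-\TV(P_0,P_1)\bigr)^{2n},
\]
and substituting into the previous display yields the stated factor $\tfrac18\,\Delta\,[1-\TV(P_0,P_1)]^{2n}$.

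The computation is essentially mechanical; the only place where care is needed is the tensorization step, since a naive attempt to bound $\TV(P_0^n,P_1^n)$ directly in terms of $\TV(P_0,P_1)$ is lossy (it does not tensorize). Going through the Hellinger affinity is the standard fix, and it is what produces the exponent $2n$ rather than $n$ in the final bound.
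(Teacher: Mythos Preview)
Your argument is correct and is the standard two-point/Hellinger-affinity route to Le Cam's bound. Note, however, that the paper does not actually prove this lemma: it is stated in the appendix as a quoted result from \cite{Yu97} and used as a black box in the proof of Proposition~\ref{prop:lowerboundDTM}. So there is no ``paper's proof'' to compare against; your write-up simply fills in what the paper outsources to the reference, and does so cleanly.
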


\paragraph{Acknowledgments:} The authors are grateful to J\'erome Dedecker for pointing out the key decomposition
Lemma~\ref{DecomposDeltaJerome} of the DTM.  The authors were supported by the ANR project TopData ANR-13-BS01-0008.
\bibliographystyle{apalike}
\bibliography{DTM}

\end{document}